\newtheorem{thm}{Theorem}[section]
\newtheorem{lemma}[thm]{Lemma}
\theoremstyle{definition}
\newtheorem{defn}[thm]{Definition}
\theoremstyle{remark}
\newtheorem{remark}[thm]{Remark}
\numberwithin{equation}{section}
\newcommand*\wrapletters[1]{\wr@pletters#1\@nil}
\def\wr@pletters#1#2\@nil{#1\allowbreak\if&#2&\else\wr@pletters#2\@nil\fi}
\def\eps{\varepsilon}
\def\le{\leqslant} \def\ge{\geqslant}
\def\d{{\,{\bRm d}}}
\def \d{\mathrm{d}}
\def \bA {\mathbf A}
\def \bC {\mathbb C}
\def \bbF {\mathbb F}
\def \bI {\mathbb I}
\def \bN {\mathbb N}
\def \bP {\mathbb P}
\def \bQ {\mathbb Q}
\def \bR {\mathbb R}
\def \bZ {\mathbb Z}
\def \bS {\mathbb S}
\def \bV {\mathbb V}
\def \ba {\mathbf a}
\def \bb {\mathbf b}
\def \bc {\mathbf c}
\def \bd {\mathbf d}
\def \bh {\mathbf h}
\def \bj {\mathbf j}
\def \br {\mathbf r}
\def \bt {\mathbf t}
\def \bu {\mathbf u}
\def \bv {\mathbf v}
\def \bx {\mathbf x}
\def \bw {\mathbf w}
\def \by {\mathbf y}
\def \bz {\mathbf z}
\def \fc {\mathfrak c}
\def \fd {\mathfrak d}
\def \fC {\mathfrak C}
\def \fF {\mathfrak F}
\def \fI {\mathfrak I}
\def \fJ {\mathfrak J}
\def \fR {\mathfrak R}
\def \fS {\mathfrak S}
\def \dim {\mathrm{dim}}
\def \det {\mathrm{det}}
\def \Span {\mathrm{Span}}
\def \vol {\mathrm{vol}}
\def \rad {{\mathrm{rad}}}
\begin{document}
\title[Random Diophantine equations in the primes II]{Random Diophantine equations in the primes II}
\author[Philippa Holdridge]{Philippa Holdridge}
\address{}
\email{holdridge.philippa@renyi.hu}
\subjclass[2020]{11D45,11P21,11P32}
\keywords{Hasse principle, geometry of numbers, points with prime coordinates}
\thanks{}
\date{}

\begin{abstract}
Let $d\ge 2$ and $n\ge d$ with $(d,n)\notin \{(2,2),(3,3)\}$. We consider homogeneous Diophantine equations of degree $d$ in $n+1$ variables and whether they have solutions in the primes. In particular, we show that a certain local-global principle holds for almost all such equations, following on from previous work of the author \cite{h2023}. We do this by adapting the methods of Browning, Le Boudec and Sawin \cite{bbs2023}, with the main input coming from some results on counting points with prime coordinates in lattices.
\end{abstract}
\maketitle
\setcounter{tocdepth}{1}
\tableofcontents

\section{Introduction}
Let $\bV_{d,n}=\bP^{N_{d,n}-1}(\bQ)$ where $N_{d,n}=\binom{n+d}{d}$. We see that $N_{d,n}$ is the number of monomials of degree $d$ in $n+1$ variables and so we can think of $\bV_{d,n}$ as the space of homogeneous polynomials of degree $d$ in $n+1$ variables $x_{0},\dots,x_{n}$, up to scaling. We let $\nu_{d,n}:\bR^{n+1}\rightarrow \bR^{N_{d,n}}$ be the Veronese embedding, which is defined by listing all the monomials in lexicographical order and let $\bZ_{\mathrm{prim}}^{N}$ denote the set of primitive vectors in $\bZ^{N}$ (by which we mean the integer vectors $\ba$ such that $\gcd(\ba)=1$). To each $V \in \bV_{d,n}$ we can associate a coefficient vector $\ba_{V}\in \bZ_{\mathrm{prim}}^{N_{d,n}}$, which is unique up to sign, and, writing $\langle \cdot, \cdot \rangle$ for the standard Euclidean inner product, identify $V$ with the vanishing locus of the equation
\[\langle\ba_{V}, \nu_{d,n}(\bx)\rangle = 0.\]
We will also frequently use the notation $f_{\ba}(\bx)=\langle\ba, \nu_{d,n}(\bx)\rangle$, where $\ba \in \bR^{N_{d,n}}$. Let
\[\bV_{d,n}(A)=\left\{V\in\bV_{d,n}:\Vert \ba_{V}\Vert\le A\right\},\]
where we always use $\Vert\cdot\Vert$ to denote Euclidean norm unless specified otherwise.
We write $\bA_{\bQ}$ for the ring of rational adeles and $V(\bA_{\bQ})$ for the set of $\bA_{\bQ}$-points of $V$ and define
\[\bV_{d,n}^{\mathrm{loc}}=\left\{V\in\bV_{d,n}:V(\bA_{\bQ})\ne \emptyset\right\}\]
and
\[\bV_{d,n}^{\mathrm{loc}}(A)=\left\{V\in\bV_{d,n}^{\mathrm{loc}}:\Vert \ba_{V}\Vert\le A\right\}.\]
Note that, since $V$ is projective, having $V(\bA_{\bQ})\ne \emptyset$ is equivalent to having a point in $\bR$ and in $\bQ_{p}$ for all $p$.

Browning, Le Boudec and Sawin showed \cite{bbs2023} that when $n\ge d\ge 2$ and $(n,d)\notin \{(2,2),(3,3)\}$ then for almost all $V\in\bV_{d,n}^{\mathrm{loc}}$, we have $V(\bQ)\ne \emptyset$. By ``almost all'', we mean that it holds for a proportion $1-o(1)$ of the $V\in\bV_{d,n}^{\mathrm{loc}}(A)$ as $A\rightarrow\infty$. The main goal of this paper is to prove an analogous result, but instead of looking for rational points on $V$, we look for points with prime coordinates. We note, as in \cite{bbs2023}, that the assumption $n\ge d$ implies that a generic $V\in \bV_{d,n}$ will be a smooth Fano hypersurface.

To this end, let $\mathcal{P}$ be the set of all prime numbers. We embed the set $\mathcal{P}^{n+1}\setminus \Span\{(1,\dots,1)\}$ into $\bP^{n}(\bQ)$ and let $V(\mathcal{P})$ be the set of points on $V$ which lie in the image of this embedding. (The reason for excluding $\Span\{(1,\dots,1)\})$ is to avoid diagonal solutions $(p,\dots,p)$ so that our set maps injectively.) We are interested in when $V(\mathcal{P})\ne \emptyset$. If we let $\bR^{+}=\{x\in\bR:x>0\}$, $\bZ_{p}^{\times}=\{x\in \bQ_{p}: |x|_{p}=1\}$ and $\bA'_{\bQ}=\bR^{+}\times\prod_{p}\bZ_{p}^{\times}\subset \bA_{\bQ}$, then $(\bA'_{\bQ})^{n+1}$ embeds into $\bP^{n}(\bA_{\bQ})$ and we let $V(\bA'_{\bQ})$ denote the points of $V$ which lie in this image. Then $V(\bA'_{\bQ})\ne\emptyset$ if and only if $V$ has a point over $\bR^{+}$ and over $\bZ_{p}^{\times}$ for all $p$.

We now let
\[\bV_{d,n}^{\mathrm{ploc}}=\left\{V\in\bV_{d,n}:V(\bA'_{\bQ})\ne \emptyset\right\}\]
and
\[\bV_{d,n}^{\mathrm{ploc}}(A)=\bV_{d,n}^{\mathrm{ploc}}\cap\bV_{d,n}(A).\]

We use the notation $\log_{(k)} x$ to mean the function defined inductively by $\log_{(0)} x = x$ and
\[\log_{(k+1)} x = \max\{ 1, \log (\log_{(k)} x)\}.\]
So $\log_{(k)} x$ is, for sufficiently large $x$, equal to $\log$ applied $k$ times.

\begin{thm}\label{positivelocaldensity}
Let $d,n\ge 2$. Then for some $\rho_{d,n}^{\mathrm{ploc}}\in \bR$,
\[\frac{\#\bV_{d,n}^{\mathrm{ploc}}(A)}{\#\bV_{d,n}(A)}=\rho_{d,n}^{\mathrm{ploc}}+o(1).\]
Furthermore, we have $\rho_{d,n}^{\mathrm{ploc}}>0$ if and only if $(d,n)\ne (2,2)$.

In particular, when $(d,n)\ne (2,2)$,
\[\#\bV_{d,n}^{\mathrm{ploc}}(A)\gg A^{N_{d,n}}.\]
\end{thm}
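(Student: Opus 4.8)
We first identify the candidate density. Given a nonzero $\ba \in \bZ^{N_{d,n}}$, write $V_{\ba} \in \bV_{d,n}$ for the associated hypersurface. Let $\Om_{\infty}$ be the set of $\ba$ in the unit ball of $\bR^{N_{d,n}}$ for which $V_{\ba}$ has a zero in $(\bR^{+})^{n+1}$, and for a prime $p$ let $\Om_{p} \subseteq \bZ_{p}^{N_{d,n}}$ be the set of $\ba$ for which $V_{\ba}$ has a zero in $(\bZ_{p}^{\times})^{n+1}$. The set $\Om_{\infty}$ is semialgebraic with boundary of measure zero, so $\rho_{\infty} := \vol(\Om_{\infty})/\vol(\text{unit ball})$ is well defined; $\Om_{p}$ is closed, so $\rho_{p} := \mu_{p}(\Om_{p})$ is well defined, $\mu_{p}$ being Haar probability measure on $\bZ_{p}^{N_{d,n}}$. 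As shown below, for $p$ outside a finite set $\Om_{p}$ differs from a union of residue classes modulo $p$ by a set of measure $O_{d,n}(p^{-2})$; granting this, a routine count of primitive lattice points in $\{\ba : \|\ba\| \le A\}$ subject to the archimedean condition and to the conditions modulo $p$ for $p \le z$ gives, for each fixed $z$,
\[
\#\{V \in \bV_{d,n}(A) : V(\bR^{+}) \ne \emptyset,\ V(\bZ_{p}^{\times}) \ne \emptyset \text{ for all } p \le z\} = \Big(\rho_{\infty}\prod_{p \le z}\rho_{p} + o_{z}(1)\Big)\#\bV_{d,n}(A).
\]
Since $\bV_{d,n}^{\mathrm{ploc}}(A)$ lies inside the left-hand set for every $z$, we obtain $\limsup_{A}\#\bV_{d,n}^{\mathrm{ploc}}(A)/\#\bV_{d,n}(A) \le \rho_{\infty}\prod_{p \le z}\rho_{p}$ for all $z$; hence $\rho_{d,n}^{\mathrm{ploc}}$, once its existence is known, must equal $\rho_{\infty}\prod_{p}\rho_{p}$, interpreted as $0$ when the product diverges.

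The crux, both for the claim about $\Om_{p}$ and for the convergence of the product when $(d,n)\ne (2,2)$, is the estimate $1-\rho_{p} = \mu_{p}(\{\ba : V_{\ba}(\bZ_{p}^{\times}) = \emptyset\}) = O_{d,n}(p^{-2})$, which I would prove as follows. Suppose the reduction $\overline{V_{\ba}} \subseteq \bP^{n}_{\bbF_{p}}$ is geometrically integral. Then it is geometrically reduced, so its singular locus has dimension $\le n-2$; moreover it neither contains nor is contained in any coordinate hyperplane, so each coordinate hyperplane meets it in a degree-$d$ subscheme of $\bP^{n-1}_{\bbF_{p}}$, carrying $O_{d,n}(p^{n-2})$ points. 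By the Lang--Weil estimate $\#\overline{V_{\ba}}(\bbF_{p}) = p^{n-1} + O_{d,n}(p^{n-3/2})$, so for $p \gg_{d,n} 1$ the number of $\bbF_{p}$-points of $\overline{V_{\ba}}$ with all coordinates nonzero and lying in the smooth locus is $p^{n-1} + O_{d,n}(p^{n-3/2}) > 0$; such a point lifts by Hensel's lemma to a point of $V_{\ba}(\bZ_{p}^{\times})$. Consequently, for all but finitely many $p$ the set $\{\ba : V_{\ba}(\bZ_{p}^{\times}) = \emptyset\}$ is contained in the preimage under reduction mod $p$ of $\cR_{d,n} \subseteq \bP^{N_{d,n}-1}$, the closed locus of degree-$d$ forms in $n+1$ variables that are not geometrically integral. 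A dimension count (the dominant component being $\{\ell\cdot g : \deg\ell = 1\}$) gives $\mathrm{codim}\,\cR_{d,n} = \binom{n+d-1}{d} - n$, which is $\ge 2$ precisely when $(d,n)\ne(2,2)$ (for $(d,n)=(2,2)$ it is the discriminant divisor); hence $\mu_{p}(\overline{\ba}\in\cR_{d,n}) = O_{d,n}(p^{-2})$, which also justifies the claim about $\Om_{p}$. For $(d,n)\ne(2,2)$ this makes $\prod_{p}\rho_{p}$ converge, and the fixed-$z$ count upgrades to $\#\bV_{d,n}^{\mathrm{ploc}}(A) = (\rho_{\infty}\prod_{p}\rho_{p} + o(1))\#\bV_{d,n}(A)$ by the Ekedahl-type sieve of \cite{h2023, bbs2023}, the tail over $z < p \le A^{O(1)}$ being controlled by $\sum_{p>z}p^{-2}\to 0$. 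Finally each $\rho_{p}>0$ and $\rho_{\infty}>0$, witnessed by $V = \{x_{0}^{d-1}x_{1} - x_{1}^{d-1}x_{2} = 0\}$, which has the zero $(1{:}1{:}\cdots{:}1)$ at which $\partial/\partial x_{2} = -x_{1}^{d-1}$ is a unit (resp.\ a nonzero real), so a $p$-adic (resp.\ real) neighbourhood of its coefficient vector lies in $\Om_{p}$ (resp.\ $\Om_{\infty}$); therefore $\rho_{d,n}^{\mathrm{ploc}} = \rho_{\infty}\prod_{p}\rho_{p} > 0$.

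When $(d,n)=(2,2)$ the product diverges to $0$. Indeed, by the classification of quadratic forms over $\bQ_{p}$, for every odd prime $p$ the set of ternary $\bZ_{p}$-quadratic forms anisotropic over $\bQ_{p}$ — cut out, up to lower-order sets, by a valuation condition on the determinant of density $\asymp p^{-1}$ together with a residue condition — has $\mu_{p}$-measure $\gg p^{-1}$; such a form has $V_{\ba}(\bQ_{p}) = \emptyset$, hence $V_{\ba}(\bZ_{p}^{\times}) = \emptyset$, so $1-\rho_{p} \gg p^{-1}$. Since $\sum_{p}p^{-1}$ diverges, $\prod_{p\le z}\rho_{p}\to 0$ as $z\to\infty$, and the upper bound of the first paragraph forces $\#\bV_{2,2}^{\mathrm{ploc}}(A) = o(\#\bV_{2,2}(A))$, i.e.\ $\rho_{2,2}^{\mathrm{ploc}} = 0$. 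This establishes the dichotomy $\rho_{d,n}^{\mathrm{ploc}}>0 \iff (d,n)\ne(2,2)$, and for the last assertion, when $(d,n)\ne(2,2)$ we have $\#\bV_{d,n}^{\mathrm{ploc}}(A) = (\rho_{d,n}^{\mathrm{ploc}} + o(1))\#\bV_{d,n}(A) \gg \#\bV_{d,n}(A) \gg A^{N_{d,n}}$, the last step being the standard asymptotic for the number of primitive integer vectors of Euclidean norm at most $A$ in $\bR^{N_{d,n}}$.

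The main obstacle is the uniform tail control in the sieve: one must bound, uniformly in $A$ and $z$, the number of $V\in\bV_{d,n}(A)$ whose coefficient vector either reduces modulo some prime $p\in(z,A^{O(1)}]$ into $\cR_{d,n}$ or is divisible by such a $p$, and show this contributes $o_{z}(1)\#\bV_{d,n}(A) + o_{A}(1)$. The geometric input — the codimension of $\cR_{d,n}$, hence the $O_{d,n}(p^{-2})$ density bound away from $(2,2)$, and the Lang--Weil count turning ``geometrically integral mod $p$'' into ``has a $\bZ_{p}^{\times}$-zero'' — is precisely what makes the relevant sum converge; the sieve itself is a direct adaptation of the corresponding step in \cite{h2023, bbs2023}.
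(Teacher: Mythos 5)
Your proposal is correct and its core machinery is the same as the paper's: Lang--Weil plus Hensel to show that failure of $\bZ_p^\times$-solubility forces the reduction mod $p$ into the codimension-$\ge 2$ locus of non-geometrically-integral forms, followed by a Poonen--Stoll/Ekedahl-type sieve identifying the density with $(1-s_\infty)\prod_p(1-s_p)=\rho_\infty\prod_p\rho_p$; both arguments also defer the uniform tail over $z<p\le A^{O(1)}$ to an external sieve lemma (you to \cite{h2023,bbs2023}, the paper to \cite[Lemma 21]{ps1999}). There are, however, three genuine points of divergence. First, for $(d,n)=(2,2)$ the paper simply quotes $\rho_{2,2}^{\mathrm{loc}}=0$ from \cite[Equation (1.2)]{bbs2023}, whereas you prove $\rho_{2,2}^{\mathrm{ploc}}=0$ directly by exhibiting anisotropic ternary forms of $\mu_p$-measure $\gg p^{-1}$ and letting the Euler product diverge against the fixed-$z$ upper bound; this is self-contained and correct (a unimodular binary form anisotropic mod $p$ represents only elements of even valuation, so adjoining $puz^2$ keeps the form anisotropic), at the cost of redoing what the citation supplies. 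Second, you compute $\mathrm{codim}\,\cR_{d,n}=\binom{n+d-1}{d}-n$ explicitly and read off that it is $\ge 2$ exactly when $(d,n)\ne(2,2)$, while the paper routes this through Noether's theorem (producing the coprime polynomials $g_1,\dots,g_m$ needed for \cite[Lemma 21]{ps1999}) and cites \cite{pv2004} for the codimension; your version makes the dichotomy transparent, but note that to feed the Ekedahl sieve you will still need $\cR_{d,n}$ realised as a closed subscheme over $\bZ$ of the right codimension, which is what Noether's theorem provides. Third, your positivity witness $x_0^{d-1}x_1-x_1^{d-1}x_2$ with the point $(1{:}\cdots{:}1)$ is slightly more robust than the paper's $x_0^d-x_1^d$, since the partial derivative $-x_1^{d-1}$ is a unit even at primes dividing $d$; likewise your appeal to semialgebraicity for $\mathrm{vol}(\partial\Omega_\infty)=0$ replaces the paper's explicit identification of $\partial U_\infty$ with $\mathscr{B}_{N,0}$ and Lemma \ref{badvolbound}. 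None of these differences constitutes a gap.
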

Throughout this paper, $d,n$ will be considered fixed, so that any $o(\cdot)$ terms and any implied constants may depend on them.

We remark that $\rho_{2,2}^{\mathrm{ploc}}=0$ follows from \cite[Exemple 4]{s1990}, which states that $\rho_{2,2}^{\mathrm{loc}}=0$, where
\[\rho_{d,n}^{\mathrm{loc}}=\lim_{A\rightarrow\infty}\frac{\#\bV_{d,n}^{\mathrm{loc}}(A)}{\#\bV_{d,n}(A)}.\]

The following is our main result.
\begin{thm}\label{main}
Suppose $n\ge d\ge 2$ with $(n,d)\notin\{(2,2),(3,3)\}$. Then for all
\[0<\kappa<\frac{n-2}{9n+2},\]
we have
\[\frac{1}{\#\bV_{d,n}^{\mathrm{ploc}}(A)}\#\left\{V\in\bV_{d,n}^{\mathrm{ploc}}(A):V(\mathcal{P})=\emptyset\right\}\ll_{\kappa} \frac{1}{(\log_{(2)} A)^{\kappa}}.\]
\end{thm}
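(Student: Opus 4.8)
The plan is to follow the strategy of Browning--Le Boudec--Sawin \cite{bbs2023}, as adapted in the author's earlier work \cite{h2023}, but now with the extra constraint that solutions must have prime coordinates. The overall architecture is a \emph{descent on the dimension of a linear subspace}: for $V\in\bV_{d,n}^{\mathrm{ploc}}(A)$ failing to contain a point in $\mathcal{P}$, one wants to produce a rational linear subspace $L\subseteq\bP^{n}$ on which the restriction $V|_{L}$ is still everywhere locally soluble over $\bA'_{\bQ}$ but has no $\mathcal{P}$-point, and then iterate. The base case is a low-dimensional linear space (a line, or $\bP^{2}$ when $d=3$) where one can count directly. So the first step I would carry out is to set up the combinatorial/geometric framework: parametrise rational subspaces $L$ of each dimension by their Plücker coordinates or by a reduced basis obtained from the geometry of numbers, and express $\#\{V\in\bV_{d,n}^{\mathrm{ploc}}(A):V(\mathcal{P})=\emptyset\}$ as a sum over such $L$ of the number of $V$ whose $\mathcal{P}$-failure is ``witnessed'' at level $L$.

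Second, I would establish the two main inputs that drive the induction. The first is a \emph{circle-method / sieve estimate}: for a fixed subspace $L$ of dimension $m$ large enough (roughly $m\ge$ some bound depending on $d$, ensuring enough variables), the number of $V$ for which $V|_{L}$ is $\bA'_{\bQ}$-locally soluble yet has no point with prime coordinates is small — this is where the prime-coordinate condition really bites, and one invokes a version of the Hardy--Littlewood method with a sieve weight (Vinogradov / transference, or an application of the results underlying \cite{h2023}) to show that local solubility over $\bR^{+}$ and over each $\bZ_{p}^{\times}$ forces a genuine prime solution except on a thin set. The second input is a \emph{lattice-point / geometry-of-numbers count}: bounding, for each dimension $m$, how many $V\in\bV_{d,n}(A)$ can have \emph{all} their $\mathcal{P}$-obstruction pushed down onto \emph{some} $m$-dimensional subspace, using successive-minima estimates to count subspaces and the forms vanishing appropriately on them. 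Combining these across the telescoping dimensions, and comparing with the lower bound $\#\bV_{d,n}^{\mathrm{ploc}}(A)\gg A^{N_{d,n}}$ from Theorem~\ref{positivelocaldensity}, yields a bound of the shape $(\log_{(2)}A)^{-\kappa}$; the double logarithm and the precise exponent $\kappa<(n-2)/(16n+1)$ emerge from optimising the parameter (the cutoff defining ``large enough'' dimension or the sieve level) against the error terms, exactly as the analogous $(\log A)^{-\kappa}$-type savings arise in \cite{bbs2023}.

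Concretely, the steps in order would be: (i) fix $\kappa$ in the stated range and reduce, via the dimension-descent, to bounding at each level the quantity $N_m(A)=\#\{V\in\bV_{d,n}(A): \exists\, L,\ \dim L=m,\ V|_L\in (\text{ploc}),\ V|_L(\mathcal{P})=\emptyset\}$; (ii) prove the geometry-of-numbers bound counting subspaces $L$ with small height and the linear conditions they impose on $\ba_V$, giving $N_m(A)\ll A^{N_{d,n}}\cdot(\text{small factor})$ once $m$ exceeds a threshold; (iii) prove the prime-solubility result on a fixed large-dimensional $L$, showing the ``bad'' $V|_L$ form a set of relative density $\ll(\log_{(2)}A)^{-\kappa}$; (iv) handle the base case in dimension $1$ (and $\bP^2$ for $d=3$) directly, using that a locally soluble line/plane of bounded height that misses $\mathcal{P}$ is rare; (v) glue the levels together with the telescoping identity and the lower bound from Theorem~\ref{positivelocaldensity}. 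I expect the main obstacle to be step (iii): transferring the everywhere-local-over-$\bA'_{\bQ}$ hypothesis into a prime solution on average over a \emph{family} of forms restricted to a subspace, uniformly in the subspace and with a quantitative $(\log_{(2)}A)^{-\kappa}$ error — this requires a careful major/minor arc analysis with the prime weight (or an application of the main theorem of \cite{h2023} in a uniform form), and controlling the $p$-adic densities so that the singular series is bounded below off a thin set. The geometry-of-numbers bookkeeping in step (ii) is technically heavy but, following \cite{bbs2023}, essentially routine; the real analytic content is in the prime-solubility transference.
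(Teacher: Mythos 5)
Your proposed architecture --- a descent on the dimension of a linear subspace, with the failure of $V(\mathcal{P})\ne\emptyset$ ``witnessed'' on some rational subspace $L$ and a circle-method base case in low dimension --- is not the strategy of \cite{bbs2023}, and it would not work here. There is no mechanism by which the absence of prime points on $V$ is detected on a linear subspace: a point of $\bP^{n}$ with prime coordinates does not correspond to anything arithmetically meaningful in coordinates on a proper subspace $L$, so the quantity ``$V|_{L}(\mathcal{P})$'' in your step (i) is not defined in a way that makes the telescoping identity true, and the base case on a line amounts to asking for prime solutions of a single binary form, which is hopeless. Moreover, your step (iii) asks for a circle-method/sieve proof of prime solubility for an individual form of degree $d$ in roughly $m+1$ variables; by the work of Cook--Magyar and Yamagishi cited in the introduction this requires a number of variables growing at least like $4^{d}$ times a polynomial in $d$, which is unavailable in the regime $n\ge d$ with $n$ comparable to $d$. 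The entire point of the statistical setting is to avoid ever invoking solubility for an individual form.

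The actual proof (both in \cite{bbs2023} and here) is a second-moment argument over the family itself. One introduces the prime-point counting function $N'_{V}(B)$ and a local model $N^{\mathrm{ploc}}_{V}(B)$ built from the lattice and cone conditions $\ba_{V}\in\Lambda^{(W)}_{\nu_{d,n}(\bx)}\cap\mathcal{C}^{(\alpha)}_{\nu_{d,n}(\bx)}$. Theorem \ref{mainvar} bounds the variance $\sum_{V}(N'_{V}-N^{\mathrm{ploc}}_{V})^{2}$ by interchanging the order of summation and counting prime vectors in lattices (Lemmas \ref{matrixprimecount}--\ref{lXYbound}, which replace the integral lattice-point counts of \cite{bbs2023} by Selberg-sieve upper bounds), and Chebyshev's inequality then gives Theorem \ref{countapproxlocalcount}. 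Independently, Theorem \ref{localcountnotsmall} shows that $N^{\mathrm{ploc}}_{V}$ is rarely small, via moment estimates for the non-Archimedean factor and a volume analysis of the Archimedean factor restricted to $(\bR^{+})^{n+1}$. The theorem follows by combining the two with $\xi(A)=(\log_{(2)}A)^{c}$ and optimising $c=(n-2-\eps)/(1+1/(16n))$, which is exactly where the exponent $(n-2)/(16n+1)$ comes from; the double logarithm reflects the sieve losses and the small choice of $w$, not a dimension cutoff or sieve level in your sense. To repair your write-up you should discard the descent framework entirely and set up this variance comparison.
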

In other words, when $n\ge d\ge 2$ and $(d,n)\notin\{(2,2),(3,3)\}$, for almost all homogeneous equations of degree $d$ in $n+1$ variables, if there is a solution in $\bR^{+}$ and in $\bZ_{p}^{\times}$ for all $p$, then the equation has a solution in the primes. In some previous work of the author \cite{h2023} they showed an analogous result for equations of the form $a_{1}x_{1}^{d}+\cdots+a_{s}x_{s}^{d}=0$, under the assumption that $s\ge 3d+1$ when $d$ is even and $s\ge 3d+2$ when $d$ is odd. Both of these results only apply to almost all equations. In contrast, Cook and Magyar showed \cite{cm2014} that for any system of integral polynomial equations, a similar local-global principle holds for solubility in the primes, provided that the number of variables is sufficiently large (in terms of the degree and number of equations) compared to the dimension of a certain variety defined in terms of the Jacobian matrix. The number of variables required here is very large in general, growing like an exponential tower in the degree $d$ even with a single equation. Under the assumption that the system is nonsingular, Liu and Zhao reduced the number of variables to $4^{d+2}d^{2}r^{5}$, where $d$ is the degree and $r$ is the number of equations \cite{lz2023}. In the case of quadratic forms, there has been work on showing a local-global principle for equations $f(\bx)=N$ for a fixed $N\in\bZ$ and $f$ a quadratic form. Liu \cite{l2011} showed this for a wide family of quadratic forms in any number of variables that is even and at least 10. Zhao \cite{z2016} improved this to any indefinite form in at least 9 variables, and finally Green \cite{g2021} showed it for forms in 8 variables, subject to mild non-degeneracy conditions.

Though we do not tackle the degree 1 case, we note that Green and Tao showed \cite[Theorem 1.8]{gt2010} that a wide range of systems of inhomogeneous linear equations satisfy a local-global principle for prime solubility. This includes the case of a single equation $a_{1}x_{1}+\dots+a_{s}x_{s}=N$ when $s\ge3$ and the $a_{i}$ are all nonzero, but does not include, for example, the case $x_{1}-x_{2}=2$, which is equivalent to the twin prime conjecture. 

Following the notation of \cite{bbs2023}, we define, for $N\ge 1$, $\bv\in\bR^{N}$ and $\bc\in\bZ^{N}$,
\begin{equation}\label{defcalC}\mathcal{C}_{\bv}^{(\gamma)}=\left\{\bt\in\bR^{N}:|\langle\bv,\bt\rangle|\le\frac{\Vert \bv\Vert\cdot\Vert\bt\Vert}{2\gamma}\right\},\end{equation}
\[\Lambda_{\bc}=\left\{\by\in\bZ^{N}:\langle\bc,\by\rangle=0\right\},\]
and
\[\Lambda_{\bc}^{(Q)}=\left\{\by\in\bZ^{N}:\langle\bc,\by\rangle\equiv 0\:\text{(mod $Q$)}\right\}.\]
As in \cite[Section 3.1]{bbs2023}, we define a lattice to be a discrete subgroup of $\bR^{N}$. Any lattice $\Lambda$ will have a basis $\bb_{1},\dots,\bb_{r}$ and we say that $r$ is the rank of $\Lambda$ and define the determinant $\det \Lambda$ to be the area of the fundamental parallelepiped spanned by $\bb_{1},\dots,\bb_{r}$. These are both independent of the choice of basis. Note that $\Lambda_{\bc}$ and $\Lambda_{\bc}^{(Q)}$ are both lattices of rank $N-1$ and $N$ respectively. If $\Lambda\subset\bZ^{N}$ then we say that $\Lambda$ is integral, and we say that $\Lambda$ is primitive if it is integral and there is no integral lattice $\Lambda'$ of the same rank such that $\Lambda \subsetneq \Lambda'$. 

We define, analogously to the quantities $\Xi_{d,n}(B)$ and $N_{V}(B)$ from \cite{bbs2023},
\[\Xi'_{d,n}(B)=\left\{\bx\in\mathcal{P}^{n+1}\setminus\Span\{(1,\dots,1)\}:\Vert \bx\Vert \le B^{1/(n+1-d)}\right\},\]
and, for $V\in \bV_{d,n}$,
\[N'_{V}(B)=(\log B)^{n+1}\sum_{\substack{\bx\in\Xi'_{d,n}(B)\\\ba_{V}\in\Lambda_{\nu_{d,n}(\bx)}}} 1.\]
This function counts the number of non-diagonal prime solutions to the equation $f_{\ba_{V}}(\bx)=\langle \ba_{V}, \nu_{d,n}(\bx)\rangle=0$ with a weight of $(\log B)^{n+1}$ to compensate for the sparsity of primes.

We also let
\begin{equation}\label{Wdef} w=\frac{\log_{(2)} B}{\log_{(3)} B},\quad \alpha= \log B, \quad W=\prod_{p\le w} p^{\lceil \log w/\log p\rceil+1},\end{equation}
and we define
\[N^{\mathrm{ploc}}_{V}(B)=(\log B)^{n+1}\frac{\alpha W}{\Vert \ba_{V}\Vert}\sum_{\substack{\bx\in\Xi'_{d,n}(B)\\\ba_{V}\in \Lambda_{\nu_{d,n}(\bx)}^{(W)}\cap \mathcal{C}_{\nu_{d,n}(\bx)}^{(\alpha)}}} \frac{1}{\Vert \nu_{d,n}(\bx)\Vert}.\]

As in \cite{bbs2023}, the general idea of the proof is to to establish an upper bound on the average difference between $N'_{V}$ and $N_{V}^{\mathrm{ploc}}$ and then show that $N_{V}^{\mathrm{ploc}}$ is usually not very small when there are local solutions. This is the object of the next three theorems, which are analogues of \cite[Propositions 2.4, 2.3 and 4.1]{bbs2023} respectively.
\begin{thm}\label{localcountnotsmall}
Let $d\ge 2$ and $n\ge d$ with $(d,n)\ne (2,2)$. Let $\psi,\xi:\bR_{>0}\rightarrow \bR_{>1}$ such that $\psi(A)\le A$, and $\xi(A)\le (\log A)^{9n+1}$. Then we have, for all $\eps>0$
\[\frac{1}{\#\bV_{d,n}^{\mathrm{ploc}}(A)}\cdot\#\left\{V\in\bV_{d,n}^{\mathrm{ploc}}(A):N^{\mathrm{ploc}}_{V}(A\psi(A))\le \frac{\psi(A)}{\xi(A)^{1/2}}\right\}\ll_{\eps} \frac{1}{\xi(A)^{1/(9n+1)-\eps}}.\]
\end{thm}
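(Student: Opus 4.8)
The plan is to prove Theorem~\ref{localcountnotsmall} by a second-moment (Cauchy--Schwarz) argument, showing that $N^{\mathrm{ploc}}_V(A\psi(A))$ is, on average, of size comparable to $\psi(A)$, with a variance small enough that Chebyshev's inequality controls the number of $V$ where it is atypically small. Concretely, writing $B = A\psi(A)$, I would estimate the first moment
\[
S_1(A) = \frac{1}{\#\bV_{d,n}^{\mathrm{ploc}}(A)}\sum_{V\in\bV_{d,n}^{\mathrm{ploc}}(A)} N^{\mathrm{ploc}}_{V}(B)
\]
and the second moment $S_2(A)$ defined analogously with $N^{\mathrm{ploc}}_V(B)^2$, and then apply Cauchy--Schwarz in the form: the set of $V$ with $N^{\mathrm{ploc}}_V(B)\le \psi(A)/\xi(A)^{1/2}$ has relative size $\ll S_2/S_1^2 \cdot (\text{something involving }\xi)$, or more precisely use the Paley--Zygmund inequality. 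The target exponent $1/(16n)$ in $\xi$ strongly suggests that the ratio $S_2/S_1^2$ is bounded (or grows at most polylogarithmically), so that the loss comes from the threshold $\psi/\xi^{1/2}$ being a factor $\xi^{1/2}$ below the true mean $\asymp \psi$; squaring and the Paley--Zygmund constant then give the stated power of $\xi$.

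**The main steps**, in order, would be as follows. First, unfold the definition of $N^{\mathrm{ploc}}_V(B)$: it is $(\log B)^{n+1}\alpha W \|\ba_V\|^{-1}$ times a sum over $\bx\in\Xi'_{d,n}(B)$ of $\|\nu_{d,n}(\bx)\|^{-1}$, restricted to those $\bx$ with $\ba_V\in\Lambda^{(W)}_{\nu_{d,n}(\bx)}\cap \mathcal{C}^{(\alpha)}_{\nu_{d,n}(\bx)}$. When we sum over $V\in\bV_{d,n}^{\mathrm{ploc}}(A)$, i.e.\ over primitive $\ba\in\bZ^{N_{d,n}}$ with $\|\ba\|\le A$ lying in the p-local locus, we swap the order of summation and, for each fixed $\bx$, count primitive vectors $\ba$ with $\|\ba\|\le A$ satisfying the congruence condition $\langle\ba,\nu_{d,n}(\bx)\rangle\equiv 0\pmod W$ and the convexity/angle condition defining $\mathcal{C}^{(\alpha)}_{\nu_{d,n}(\bx)}$. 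This is a lattice-point count in the sublattice $\Lambda^{(W)}_{\nu_{d,n}(\bx)}$ intersected with a ball of radius $A$ and a cone of half-angle roughly $1/\alpha$; by the geometry of numbers (for which the paper's setup with $\mathcal{C}$, $\Lambda_{\bc}$, $\Lambda^{(Q)}_{\bc}$ is clearly designed) this count is $\asymp A^{N_{d,n}} \cdot \alpha^{-1}\|\nu_{d,n}(\bx)\|^{-1}\cdot W^{-1}$ up to lower-order terms and the restriction to the p-local locus (which contributes a positive constant density, using $(d,n)\ne(2,2)$ via Theorem~\ref{positivelocaldensity}). The normalising factors $\alpha W/\|\ba_V\|$ in the definition of $N^{\mathrm{ploc}}_V$ are precisely chosen to cancel these, so that $S_1(A) \asymp (\log B)^{n+1}\,\#\Xi'_{d,n}(B)\cdot B^{-(n+1)/(n+1-d)}$; since $\#\Xi'_{d,n}(B)\asymp (B^{1/(n+1-d)})^{n+1}/(\log B)^{n+1}$ by the prime number theorem, this is $\asymp 1$ --- wait, one must track the $\psi$ dependence: with $B = A\psi(A)$ the count $\#\Xi'_{d,n}(B)$ gains a factor $\psi(A)^{(n+1)/(n+1-d)}$ while the normalisation $\|\ba_V\|^{-1}\asymp A^{-1}$ stays at the $A$-scale, producing a net factor $\asymp\psi(A)$. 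So $S_1(A)\asymp\psi(A)$.

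**For the second moment**, expanding $N^{\mathrm{ploc}}_V(B)^2$ introduces a double sum over pairs $\bx,\bx'\in\Xi'_{d,n}(B)$, and summing over $V$ requires counting primitive $\ba$ in $\Lambda^{(W)}_{\nu(\bx)}\cap\Lambda^{(W)}_{\nu(\bx')}\cap\mathcal{C}^{(\alpha)}_{\nu(\bx)}\cap\mathcal{C}^{(\alpha)}_{\nu(\bx')}\cap\{\|\ba\|\le A\}$. The diagonal-type contribution $\bx=\bx'$ (and more generally $\nu(\bx),\nu(\bx')$ parallel) reproduces $S_1$ times a polylog factor; the off-diagonal contribution requires showing that for most pairs $\bx\ne\bx'$ the two hyperplane conditions are "independent", so the lattice-point count is $\asymp A^{N_{d,n}}\alpha^{-2}W^{-2}\|\nu(\bx)\|^{-1}\|\nu(\bx')\|^{-1}$, yielding $S_2(A)\ll \psi(A)^2$ plus controlled error. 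This off-diagonal estimate is where the real work --- and the main obstacle --- lies: one must handle pairs $\bx,\bx'$ for which $\nu_{d,n}(\bx)$ and $\nu_{d,n}(\bx')$ span a degenerate configuration (nearly parallel, or such that the combined lattice $\Lambda^{(W)}_{\nu(\bx)}\cap\Lambda^{(W)}_{\nu(\bx')}$ has unexpectedly small covolume because of common factors with $W$), and show they contribute negligibly; this is exactly the kind of estimate that the hypothesis $(d,n)\ne(2,2)$ and the methods of \cite{bbs2023} are tailored to deliver, and it is also where the exponent $1/(16n)$ and the constraint $c\le 8n$ on the size of $\psi$ enter --- the allowed growth $\psi\le(\log A)^{c}$ must be slow enough that the error terms in both moments (which scale with powers of $\log B\asymp\log A$ and with $W = \exp(O(w))$, $w\asymp\log_{(2)}A/\log_{(3)}A$, hence $W$ only quasi-polynomial in small quantities) stay below the main term. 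Once $S_1\asymp\psi(A)$ and $S_2\ll\psi(A)^2$ are in hand, Paley--Zygmund gives $\#\{V : N^{\mathrm{ploc}}_V(B) \ge \tfrac12 S_1\}\gg \#\bV^{\mathrm{ploc}}_{d,n}(A)$, and a quantitative refinement tracking the $\xi$-threshold gives the bound $\ll \xi(A)^{-1/(16n)}$ for the complementary (small-value) set, completing the proof.
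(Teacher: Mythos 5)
Your proposal has a genuine gap, and it lies exactly where you place the ``quantitative refinement'': first and second moments of $N^{\mathrm{ploc}}_{V}(B)$ of the quality you describe cannot yield the conclusion. If all you know is $S_{1}\asymp\psi(A)$ and $S_{2}\ll\psi(A)^{2}$, then Paley--Zygmund gives only that a \emph{positive proportion} of $V$ satisfy $N^{\mathrm{ploc}}_{V}(B)\ge\tfrac12 S_{1}$; it says nothing about the lower tail. (Take half the $V$ with $N^{\mathrm{ploc}}_{V}=0$ and half with $N^{\mathrm{ploc}}_{V}=2\psi$: both moment bounds hold, yet the small-value set has density $1/2$.) To control the lower tail by Chebyshev you would need genuine concentration, $S_{2}=S_{1}^{2}\bigl(1+O(\xi^{-\theta})\bigr)$, and this is \emph{false} in the present setting: $N^{\mathrm{ploc}}_{V}(B)$ is designed to be approximately $\tfrac{B}{A}\,\fS'_{V}(B)\fJ'_{V}(B)$, and the product of local densities $\fS'_{V}\fJ'_{V}$ fluctuates by bounded-but-not-negligible factors as $V$ varies (indeed it vanishes off $\bV^{\mathrm{ploc}}_{d,n}$), so the variance of $N^{\mathrm{ploc}}_{V}$ over $\bV^{\mathrm{ploc}}_{d,n}(A)$ is genuinely of order $\psi^{2}$, not $o(\psi^{2})$. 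Your approach also never uses the restriction to the $p$-locally soluble locus, which is essential: that is precisely the hypothesis guaranteeing every local factor is positive, after which the content of the theorem is that these positive factors are \emph{rarely very small}.

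The paper's actual route is different in structure. Lemma \ref{localcountlowerbound} gives the pointwise lower bound $\fS'_{V}(B)\fJ'_{V}(B)\ll_{C}\tfrac{A}{B}N^{\mathrm{ploc}}_{V}(B)+(\log B)^{-C}$, proved by splitting the counting region into boxes and applying a short-interval Siegel--Walfisz theorem; this reduces the theorem to showing separately that each local factor is rarely below $C/\xi^{1/4}$. The non-Archimedean factor is handled by a second-moment argument, but applied to the densities $\sigma'(\ba;p^{r})$ prime power by prime power (Lemmas \ref{sigmamean}, \ref{sigmavar}) together with a Hensel-type pointwise lower bound (Lemma \ref{sigmaboundinhatR}); the Archimedean factor is handled by real-geometric volume estimates for the exceptional sets where the gradient is small or all positive real zeros approach the coordinate hyperplanes (Lemmas \ref{badvolbound}--\ref{taulowerbound}). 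The exponent $1/(16n)$ comes out of these tail estimates for the local factors, not from any variance of $N^{\mathrm{ploc}}_{V}$ itself. If you want to salvage a moment-based argument, it must be applied to the local factors in this factored form, not to the global count.
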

\begin{thm}\label{countapproxlocalcount}
Let $d\ge 2$ and $n\ge d$ with $(d,n)\notin \{(2,2),(3,3)\}$. Suppose that $m>n+1$, $\eps>0$, $\psi(A)=(\log A)^{m}$ and $0<c<n-2-\eps$. Then we have
\begin{multline*}\frac{1}{\#\bV_{d,n}(A)}\cdot\#\left\{V\in\bV_{d,n}(A):\left|N'_{V}(A\psi(A))-N^{\mathrm{ploc}}_{V}(A\psi(A))\right|>\frac{\psi(A)}{(\log_{(2)} A)^{c/2}}\right\}\\ \ll_{c,\eps}\frac{1}{(\log_{(2)} A)^{n-2-\eps-c}}.\end{multline*}
\end{thm}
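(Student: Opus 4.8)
The plan is to compare $N'_V(B)$ and $N^{\mathrm{ploc}}_V(B)$ (with $B = A\psi(A)$) on average over $V \in \bV_{d,n}(A)$, exploiting the fact that both are essentially weighted counts of prime points $\bx \in \Xi'_{d,n}(B)$ for which $\ba_V$ lies in a sublattice determined by $\nu_{d,n}(\bx)$. The key heuristic is that $N^{\mathrm{ploc}}_V$ is the ``expected'' value of $N'_V$: the condition $\ba_V \in \Lambda_{\nu_{d,n}(\bx)}$ is the exact orthogonality condition, while $\ba_V \in \Lambda^{(W)}_{\nu_{d,n}(\bx)} \cap \mathcal{C}^{(\alpha)}_{\nu_{d,n}(\bx)}$ only asks that $\langle \ba_V, \nu_{d,n}(\bx)\rangle$ vanish modulo $W$ and that $\ba_V$ lie in a thin cone around the hyperplane $\nu_{d,n}(\bx)^\perp$; the weight $\alpha W / (\Vert\ba_V\Vert \Vert\nu_{d,n}(\bx)\Vert)$ is chosen precisely so that, for a random $\ba_V$ of norm $\approx A$, the probability of the cone-and-congruence condition matches that of exact orthogonality. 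So the strategy is: first, for a \emph{fixed} admissible $\bx$, estimate $\#\{V \in \bV_{d,n}(A) : \ba_V \text{ satisfies the given condition}\}$ for each of the two conditions, and show the difference is $O(A^{N_{d,n}}$ times a small factor times $1/\Vert\nu_{d,n}(\bx)\Vert)$; then sum over $\bx \in \Xi'_{d,n}(B)$; and finally convert the resulting $L^1$-type bound on $|N'_V - N^{\mathrm{ploc}}_V|$ into the claimed proportion via Markov's inequality.

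For the lattice-point count over $V$, the main tool is a geometry-of-numbers estimate: the number of primitive $\ba \in \bZ^{N_{d,n}}$ with $\Vert\ba\Vert \le A$ and $\langle \ba, \nu_{d,n}(\bx)\rangle \equiv 0 \pmod W$, lying moreover in the cone $\mathcal{C}^{(\alpha)}_{\nu_{d,n}(\bx)}$, should be $\asymp \frac{A^{N_{d,n}}}{W} \cdot \frac{1}{\alpha}$ up to acceptable errors, since the cone has relative ``width'' $1/\alpha$ in the $\nu_{d,n}(\bx)$-direction and the congruence cuts the count by $1/W$ (for $\gcd$-reasons one needs $\nu_{d,n}(\bx)$ to have suitable coprimality with $W$, which should hold for almost all $\bx$ since the coordinates of $\bx$ are primes $> w$). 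Comparing to the count of primitive $\ba$ with $\Vert\ba\Vert \le A$ and $\langle\ba,\nu_{d,n}(\bx)\rangle = 0$ exactly, which is $\asymp A^{N_{d,n}-1}/\Vert\nu_{d,n}(\bx)\Vert$ times a constant (the volume of an $(N_{d,n}-1)$-ball), one finds that after multiplying the cone count by the weight $\alpha W/(\Vert\ba\Vert\,\Vert\nu_{d,n}(\bx)\Vert)$ the two match to leading order; the difference comes from error terms in the lattice-point counts (boundary terms, lower-order terms in the successive-minima estimates) and from the discrepancy between ``$\equiv 0 \bmod W$ and in the cone'' and ``$=0$,'' which is where the averaging over $V$ is essential — the point being that most $\ba_V$ counted by the cone-congruence condition do \emph{not} in fact satisfy $\langle\ba_V,\nu_{d,n}(\bx)\rangle = 0$, but the \emph{count} is nonetheless close because the off-hyperplane contributions are balanced. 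One should follow the argument of \cite{bbs2023} here closely, adapting their passage between $N_V$ and $N_V^{\mathrm{loc}}$; the extra inputs in our setting are the restriction of $\bx$ to prime coordinates (which only shrinks the relevant set and is harmless) and the $W$-congruence, which is handled by the same ideal-counting/local-density bookkeeping as in \cite{bbs2023}.

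After the per-$\bx$ comparison, summing over $\bx \in \Xi'_{d,n}(B)$ introduces a factor $\sum_{\bx} 1/\Vert\nu_{d,n}(\bx)\Vert$, which over the range $\Vert\bx\Vert \le B^{1/(n+1-d)}$ is $\asymp (\log B)^{-(n+1)}$ times a power of $B^{1/(n+1-d)}$ balanced against $A^{N_{d,n}-1}$ by the choice $B = A\psi(A)$ with $\psi = (\log A)^{2n+2+\eps}$; the $(\log B)^{n+1}$ prefactors in the definitions of $N'_V$ and $N^{\mathrm{ploc}}_V$ are there to absorb the prime-density loss, and the factors $\alpha W$ are what make $N^{\mathrm{ploc}}_V$ of the right size. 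Collecting everything, the average of $|N'_V - N^{\mathrm{ploc}}_V|$ over $V \in \bV_{d,n}(A)$ is $\ll \psi(A)/(\log_{(2)} A)^{n-2-\eps}$ (the saving $(\log_{(2)} A)^{-(n-2-\eps)}$ coming from the error terms in the geometry-of-numbers step, exactly as the analogous saving appears in \cite{bbs2023}), and Markov's inequality at threshold $\psi(A)/(\log_{(2)} A)^{c/2}$ yields the stated bound $(\log_{(2)} A)^{-(n-2-\eps-c)}$. The main obstacle I anticipate is the per-$\bx$ lattice-point comparison: getting the error term in the count of primitive vectors in $\bZ^{N_{d,n}}$ satisfying a congruence modulo $W$ \emph{and} lying in a narrow cone to be genuinely smaller than the main term by the needed factor $(\log_{(2)} A)^{-(n-2)}$ — uniformly enough in $\bx$ that the sum over $\bx$ survives — requires care with the dependence on $W$ (which grows, roughly, like $e^{w} = \exp((\log_{(2)} A)/(\log_{(3)} A))$, i.e.\ slower than any power of $\log A$) and with the shape of $\nu_{d,n}(\bx)$, and this is precisely the technical heart of the \cite{bbs2023}-style argument that must be reworked in the prime setting.
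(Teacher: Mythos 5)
Your overall framework --- compare $N'_V$ with $N^{\mathrm{ploc}}_V$ on average over $V$ by fixing $\bx$ and counting the coefficient vectors $\ba_V$ satisfying each condition, then pass to a proportion statement by a Markov-type inequality --- is in the right spirit, but there is a genuine gap at the crucial step. The per-$\bx$ comparison you describe (matching, for each fixed $\bx$, the number of $\ba_V$ with $\langle \ba_V,\nu_{d,n}(\bx)\rangle=0$ against the weighted number with $\ba_V\in\Lambda^{(W)}_{\nu_{d,n}(\bx)}\cap\mathcal{C}^{(\alpha)}_{\nu_{d,n}(\bx)}$) only controls the \emph{signed} sum $\sum_V\bigl(N'_V(B)-N^{\mathrm{ploc}}_V(B)\bigr)$, not the sum of absolute values $\sum_V\bigl|N'_V(B)-N^{\mathrm{ploc}}_V(B)\bigr|$. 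You acknowledge the issue yourself when you note that the individual $\ba_V$ counted by the two conditions are mostly different and only the \emph{counts} balance: that balancing is precisely a first-moment statement about the signed difference, and it gives no control over how large $|N'_V-N^{\mathrm{ploc}}_V|$ is for a typical individual $V$. Consequently the ``$L^1$-type bound'' you invoke before applying Markov is not something your argument produces, and I do not believe the bound $\frac{1}{\#\bV_{d,n}(A)}\sum_V|N'_V-N^{\mathrm{ploc}}_V|\ll\psi(A)/(\log_{(2)}A)^{n-2-\eps}$ is accessible by this route. (Note also that feeding it into Markov at threshold $\psi(A)/(\log_{(2)}A)^{c/2}$ would yield exponent $n-2-\eps-c/2$ rather than $n-2-\eps-c$, so even the bookkeeping of your final step does not match the statement.)

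What is actually needed, and what the paper does, is a \emph{second-moment} bound: Theorem \ref{mainvar}, stated immediately before the result you are proving, gives
\[\frac{1}{\#\bV_{d,n}(A)}\sum_{V\in\bV_{d,n}(A)}\bigl(N'_V(B)-N^{\mathrm{ploc}}_V(B)\bigr)^{2}\ll_\eps\frac{B^{2}}{A^{2}(\log_{(2)}A)^{n-2-\eps}},\]
and with $B=A\psi(A)$ the theorem follows by a one-line Chebyshev inequality: bound the proportion by $\frac{1}{\#\bV_{d,n}(A)}\sum_V\bigl((N'_V-N^{\mathrm{ploc}}_V)/T\bigr)^2$ with $T=\psi(A)/(\log_{(2)}A)^{c/2}$, so that the threshold enters squared and the exponents close correctly. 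The substance is therefore entirely in Theorem \ref{mainvar}, whose proof expands the square into $\hat D_{d,n}-2\hat D^{\mathrm{mix}}_{d,n}+\hat D^{\mathrm{loc}}_{d,n}$, i.e.\ into sums over \emph{pairs} $(\bx,\by)$ of counts of $\ba_V$ in the rank-two intersection lattices $\Lambda_{\nu_{d,n}(\bx)}\cap\Lambda_{\nu_{d,n}(\by)}$ and their congruence/cone variants; the saving $(\log_{(2)}A)^{-(n-2-\eps)}$ comes from the cancellation of the three matching main terms $\iota'_{d,n}A^{N_{d,n}-2}E'_{d,n}(B)$ together with the error estimates of Lemmas \ref{Esandwich}--\ref{Dlocapprox}, which rest on the prime-point-in-lattice bounds of Lemmas \ref{matrixprimecount}--\ref{lXYbound}. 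To repair your write-up, replace the single-$\bx$ comparison and the $L^1$ Markov step by this second-moment expansion and Chebyshev.
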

\begin{thm}\label{mainvar}
Let $d\ge 2$ and $n\ge d$ with $(d,n)\notin \{(2,2),(3,3)\}$, and let $B=A(\log A)^{m}$ for some $m>n+1$. Then for all $\eps>0$, we have,
\[\frac{1}{\#\bV_{d,n}(A)}\sum_{V\in \bV_{d,n}(A)}\left(N'_{V}(B)-N^{\mathrm{ploc}}_{V}(B)\right)^{2}\ll_{m,\eps} \frac{B^{2}}{A^{2}(\log_{(2)} A)^{n-2-\eps}}.\]
\end{thm}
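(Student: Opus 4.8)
The plan is to expand the square, interchange the order of summation so that the average over $V$ (equivalently, over the primitive coefficient vector $\ba_V$ with $\Vert\ba_V\Vert\le A$) is performed innermost, and thereby reduce everything to lattice-point counting. Expanding $\big(N'_V(B)-N^{\mathrm{ploc}}_V(B)\big)^2=\big(N'_V(B)\big)^2-2N'_V(B)N^{\mathrm{ploc}}_V(B)+\big(N^{\mathrm{ploc}}_V(B)\big)^2$ and interchanging, the left-hand side of the theorem becomes a sum over ordered pairs $(\bx,\by)\in\Xi'_{d,n}(B)^2$ of four bilinear quantities, coming from the products $(N'_V)^2$, $N'_VN^{\mathrm{ploc}}_V$, $N^{\mathrm{ploc}}_VN'_V$ and $(N^{\mathrm{ploc}}_V)^2$; up to negligible corrections accounting for primitivity, each is a constant ($=(\log B)^{2n+2}/\#\bV_{d,n}(A)$) times a count of $\ba\in\bZ^{N_{d,n}}$ with $\Vert\ba\Vert\le A$ lying in a region determined by $\bx$ and $\by$. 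I split the pair sum into the diagonal $\bx=\by$ and the off-diagonal $\bx\ne\by$. Since $\bx,\by$ have prime coordinates and lie outside $\Span\{(1,\dots,1)\}$, and since the Veronese embedding is injective on $\bP^{n}$, the points $\bx\ne\by$ are exactly those for which $\nu_{d,n}(\bx)$ and $\nu_{d,n}(\by)$ are linearly independent. The diagonal is not the difficulty: on it the $(N'_V)^2$ term equals $(\log B)^{n+1}N'_V(B)$, the other three are $\ll(\log B)^{O(1)}$ times $N'_V(B)$ or $N^{\mathrm{ploc}}_V(B)$ (using $\alpha W=(\log B)^{1+o(1)}$ and $\Vert\ba_V\Vert,\Vert\nu_{d,n}(\bx)\Vert\ge1$), and a crude lattice-point count together with the prime number theorem gives $\frac{1}{\#\bV_{d,n}(A)}\sum_V N'_V(B)\ll B/A$ and $\frac{1}{\#\bV_{d,n}(A)}\sum_V N^{\mathrm{ploc}}_V(B)\ll(\log B)^{O(1)}B/A$; since $B/A=(\log A)^{2n+2+\eps}$, the whole diagonal is $\ll(\log B)^{O(1)}(\log A)^{2n+2+\eps}$, far below the target $B^2/(A^2(\log_{(2)}A)^{n-2-\eps})$.

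For the off-diagonal I would, for each admissible pair $\bx\ne\by$, evaluate the four bilinear quantities by geometry of numbers. Up to the common factor $(\log B)^{2n+2}/\#\bV_{d,n}(A)$ they are: for $(N'_V)^2$, the number of $\ba$ with $\Vert\ba\Vert\le A$ in the rank-$(N_{d,n}-2)$ lattice $\Lambda_{\nu_{d,n}(\bx)}\cap\Lambda_{\nu_{d,n}(\by)}$; for $(N^{\mathrm{ploc}}_V)^2$, the count of such $\ba$ in the congruence sublattice $\Lambda_{\nu_{d,n}(\bx)}^{(W)}\cap\Lambda_{\nu_{d,n}(\by)}^{(W)}$ and in the double cone $\mathcal{C}_{\nu_{d,n}(\bx)}^{(\alpha)}\cap\mathcal{C}_{\nu_{d,n}(\by)}^{(\alpha)}$, weighted by $(\alpha W)^2\big/\big(\Vert\ba\Vert^2\Vert\nu_{d,n}(\bx)\Vert\Vert\nu_{d,n}(\by)\Vert\big)$; and for the two mixed terms, the hybrid with the exact lattice condition for one of $\bx,\by$ and a congruence-and-cone condition for the other, weighted by $\alpha W\big/\big(\Vert\ba\Vert\Vert\nu_{d,n}(\bx)\Vert\Vert\nu_{d,n}(\by)\Vert\big)$. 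In the mixed terms, restricting the cone condition for $\by$ to the hyperplane $\nu_{d,n}(\bx)^{\perp}$ turns it into a cone about the orthogonal projection of $\nu_{d,n}(\by)$ with effective parameter $\alpha\sin\phi$, where $\sin\phi=\Vert\nu_{d,n}(\bx)\wedge\nu_{d,n}(\by)\Vert\big/\big(\Vert\nu_{d,n}(\bx)\Vert\Vert\nu_{d,n}(\by)\Vert\big)$; a short computation of spherical measures then shows the normalising weight $\alpha W/(\Vert\ba_V\Vert\Vert\nu_{d,n}(\bx)\Vert)$ is chosen precisely so that the main term of all four bilinear quantities equals one and the same constant times $(\log B)^{2n+2}\big/\big(A^2\Vert\nu_{d,n}(\bx)\wedge\nu_{d,n}(\by)\Vert\big)$. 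The alternating combination $1-1-1+1=0$ annihilates this common main term, so only error terms remain; these are of four kinds: (i) the Davenport-type boundary term in the lattice-point count, controlled by the successive minima of $\Lambda_{\nu_{d,n}(\bx)}\cap\Lambda_{\nu_{d,n}(\by)}$ and of its congruence/cone analogues, which for a skew lattice can be as large as the main term itself; (ii) a genuine discrepancy in the two mixed terms when $\nu_{d,n}(\bx),\nu_{d,n}(\by)$ are so nearly parallel that $\alpha\sin\phi\ll1$ and the cone saturates $\nu_{d,n}(\bx)^{\perp}$; (iii) pairs for which a coordinate of $\bx$ or $\by$ is a prime at most $w$, making $\gcd(\nu_{d,n}(\bx),W)$, $\gcd(\nu_{d,n}(\by),W)$, or the $\gcd$ of the $2\times2$ minors formed from the coordinates of $\nu_{d,n}(\bx)$ and $\nu_{d,n}(\by)$ exceed $1$; and (iv) the error in distributing the prime points of $\Xi'_{d,n}(B)$ over residue classes modulo $W$ (the $W$-trick, via Siegel--Walfisz).

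The main obstacle is bounding the off-diagonal error of types (i) and (ii). For (i) one needs, first, a lattice-point estimate for $\Lambda_{\nu_{d,n}(\bx)}\cap\Lambda_{\nu_{d,n}(\by)}\cap\{\Vert\cdot\Vert\le A\}$ and its congruence/cone variants with an error explicit and uniform in the successive minima; granting this, the error is smaller than the main term by a small power of $\mathrm{covol}\big(\Lambda_{\nu_{d,n}(\bx)}\cap\Lambda_{\nu_{d,n}(\by)}\big)^{1/(N_{d,n}-2)}/A$ for ``balanced'' pairs, and for the rest the issue is reduced to bounding the contribution of the skew pairs, and of the pairs with $\alpha\sin\phi\ll1$ for (ii), after they are weighted by $1/\Vert\nu_{d,n}(\bx)\wedge\nu_{d,n}(\by)\Vert$, by $\alpha\big/\big(\Vert\nu_{d,n}(\bx)\Vert\Vert\nu_{d,n}(\by)\Vert\big)$, and so on. The substance is the counting of these bad pairs: for fixed $\bx$, bounding the number of $\by\in\Xi'_{d,n}(B)$ with $\Vert\nu_{d,n}(\bx)\wedge\nu_{d,n}(\by)\Vert$ small — geometrically, the prime points of height at most $B^{1/(n+1-d)}$ whose image lies in a narrow neighbourhood of $[\bx]\in\bP^{n}$ — via a covering by boxes adapted to the line $\bR\,\nu_{d,n}(\bx)$; and bounding the skew pairs by first fixing the small-covolume sublattice of $\Lambda_{\nu_{d,n}(\bx)}\cap\Lambda_{\nu_{d,n}(\by)}$ spanned by its short vectors, then counting the $\bx,\by$ whose Veronese images are orthogonal to it. The elementary ingredients are a dyadic decomposition in $\Vert\nu_{d,n}(\bx)\wedge\nu_{d,n}(\by)\Vert$ and in the relevant successive minima, the separation bound $\Vert\nu_{d,n}(\bx)\wedge\nu_{d,n}(\by)\Vert\ge1$ for distinct projective points, standard lattice-point and divisor estimates, and an upper-bound sieve to absorb the primality constraint; the hypotheses $n\ge d$ and $(d,n)\notin\{(2,2),(3,3)\}$ enter in guaranteeing that $\Xi'_{d,n}(B)$ has the expected order and that the Veronese locus is not too degenerate for these counts, while one uses throughout that $W=(\log B)^{o(1)}$ and $\alpha=\log B$ so that no count is degraded by more than admissible powers of $\log B$. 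Summing the resulting bounds against the weights identified above then produces the required saving $(\log_{(2)}A)^{-(n-2-\eps)}$ over the trivial off-diagonal size $B^2/A^2$, which is the assertion of the theorem.
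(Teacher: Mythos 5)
Your outline reproduces the paper's strategy at the structural level: expand the square, interchange summation so that the inner count is over coefficient vectors $\ba$ lying in lattices, congruence sublattices and cones determined by $(\bx,\by)$, separate the diagonal (which is where the hypothesis $B=A(\log A)^{2n+2+\eps}$ is actually consumed), observe that the normalisations are rigged so that the off-diagonal main terms cancel in the alternating combination, and reduce to bounding the errors you list. One of your four error sources, (iv), is spurious for this theorem: no equidistribution of primes in residue classes is needed in the variance, because the condition modulo $W$ constrains $\ba$ (yielding a sublattice of controlled index), not $\bx$ or $\by$; Siegel--Walfisz only enters in the companion lower bound for $N^{\mathrm{ploc}}_{V}$ in the next section.

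The genuine gap is hidden in ``an upper-bound sieve to absorb the primality constraint''. The estimate hinges on $E'_{d,n}(B)\ll B^{2}(\log_{(3)}B)^{2n+2}$: the prefactor $(\log B)^{2n+2}$ must be recovered \emph{in full} from the primality of $\bx$ and $\by$, and only then is the relative saving $(\log_{(2)}A)^{-(n-2-\eps)}$ from the cone and $W$-congruence errors worth anything. Any Selberg-type upper bound for $2n+2$ simultaneous prime values of linear forms carries a local factor of the shape $\fc=\exp\bigl(m\sum_{p\mid g}p^{-1}\bigr)$ depending on the gcds of coordinates and of $2\times 2$ minors of the lattice basis (Lemma~\ref{matrixprimecount}); pointwise this can be as large as $(\log_{(2)}B)^{2n+2}$, and since $2n+2>n-2-\eps$ this worst case would swamp the entire saving and the theorem would fail. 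The missing idea is that one must \emph{average} this sieve constant: show that pairs for which $\fc$ exceeds $(\log_{(3)}B)^{O(1)}$ force some $d_{j}$ or $d_{i,j}$ to have abnormally many prime factors, and bound the number of such pairs by divisor-moment estimates (this is the content of Lemma~\ref{badlatticecount} and most of the work in Lemma~\ref{lXYbound}). A secondary omission: you assert the final saving without tracing its source; it is $w^{n-2}$ with $w=\log_{(2)}B/\log_{(3)}B$ coming from the term $\mathbf{1}_{\mathcal{G}(\bx,\by)\nmid W/\rad(W)}$, and $w$ cannot be enlarged because $W$ must remain below a fixed power of $\log B$ for the rest of the argument.
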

\begin{proof}[Proof of Theorem \ref{countapproxlocalcount} assuming Theorem \ref{mainvar}]
The quantity we wish to bound is at most
\[\frac{1}{\#\bV_{d,n}(A)}\sum_{V\in \bV_{d,n}(A)}\left(\frac{N'_{V}(A\psi(A))-N^{\mathrm{ploc}}_{V}(A\psi(A))}{\psi(A)/(\log_{(2)} A)^{c/2}}\right)^{2},\]
which by Theorem \ref{mainvar} is at most
\[O\left(\frac{1}{(\log_{(2)} A)^{n-2-\eps-c}}\right).\]
\end{proof}

We now show how Theorem \ref{main} can be deduced from the other theorems, so that it just remains to prove Theorems \ref{positivelocaldensity}, \ref{localcountnotsmall}, and \ref{mainvar}.
\begin{proof}[Proof of Theorem \ref{main} assuming Theorems \ref{positivelocaldensity}, \ref{localcountnotsmall}, and \ref{countapproxlocalcount}]
Clearly if $V\in\bV_{d,n}(A)$ and $N'_{V}(B)\ne 0$ for some $B$ then $V(\mathcal{P})\ne \emptyset$. If $N'_{V}(B)=0$ for some $B$ then by the triangle inequality, for all $t>0$, at least one of $N_{V}^{\mathrm{ploc}}(B)\le t$ or $|N'_{V}(B)-N_{V}^{\mathrm{ploc}}(B)|>t$ must hold. Taking $B=A(\log A)^{n+1+\eps}$ and $t=(\log A)^{n+1+\eps}(\log_{(2)} A)^{-c/2}$ where $\eps>0$ and $0<c<n-2-\eps$, we see that $V$ must lie in one of the two sets whose sizes are bounded in Theorem \ref{localcountnotsmall} and Theorem \ref{countapproxlocalcount} (where we choose $\xi(A)=(\log_{(2)} A)^{c}$ in the former and $m=n+1+\eps$ in the latter). It follows that
\begin{align*}\frac{1}{\#\bV_{d,n}(A)}\#\left\{V\in\bV_{d,n}^{\mathrm{ploc}}(A):V(\mathcal{P})=\emptyset\right\}\ll_{\eps}& \frac{1}{(\log_{(2)} A)^{c/(9n+1)-\eps}}\\+&\frac{1}{(\log_{(2)} A)^{n-2-c-\eps}}.\end{align*}
By Theorem \ref{positivelocaldensity}, we can replace the factor of $1/\#\bV_{d,n}(A)$ with $1/\#\bV_{d,n}^{\mathrm{ploc}}(A)$. If we now choose $c=(n-2)(9n+1)/(9n+2)$ then we get the result.
\end{proof}

\section*{Acknowledgements}
The author would like to thank their PhD supervisor Sam Chow for his help and advice in writing this paper.

Philippa Holdridge is supported by the Warwick Mathematics Institute Centre for Doctoral Training, and gratefully acknowledges funding from the University of Warwick.
\section*{Outline of the proof}
In Section \ref{localsolubledensitysection} we give a proof of Theorem \ref{positivelocaldensity}. This section is largely independent of the later sections.

For Theorems \ref{mainvar} and \ref{localcountnotsmall}, the method of proof will follow \cite{bbs2023} where possible. This relies heavily on the geometry of numbers and at many points requires  the counting of points in lattices. The proof of Theorem \ref{mainvar} follows \cite[Section 4]{bbs2023}. By interchanging the order of summation we will mostly only be counting integral points in lattices, and so fortunately we will be able to use many of the lemmas from \cite{bbs2023} without modification. However, we will in some places need an upper bound on certain counts of prime points, which we will show in the first part of Section \ref{latticeboundsection}. The latter parts of this section are devoted to the application of these bounds to producing replacements for some of the key lemmas of \cite{bbs2023}, and finally deducing Theorem \ref{mainvar} from them.

Section \ref{localcountlowerboundsection} is dedicated to the proof of Theorem \ref{localcountnotsmall}. We first establish a lower bound on a count of prime points in a certain lattice and in a certain region in terms of Archimedean and non-Archimedean factors. We do this by cutting up the region into boxes and applying a version of the Siegel-Walfisz theorem in short intervals. These factors are then bounded below almost always in a similar way to \cite[Section 5]{bbs2023}, except the bounding of the Archimedean factor is more complicated because we have to deal with the cases where the equation has most, but not all, of its real solutions lying outside of the region $(\bR^{+})^{n+1}$.
\section*{Notation}
We use the notation $O(f(x))$ to denote a function $g(x)$ such that, for some constant $C>0$, $|g(x)| \le C f(x)$ for all $x$ for which $f(x)$ is defined. We also write $f(x)\ll g(x)$ if $f(x)=O(g(x))$ and $f(x)\asymp g(x)$ if both $f(x)\ll g(x)$ and $f(x)\gg g(x)$. We write $o(f(x))$ to denote a function $g(x)$ such that $g(x)/f(x)\rightarrow 0$ as $x\rightarrow \infty$ or as $x\rightarrow c$ for some specified $c$.

Given $\bv \in \bR^{N}$, $N \ge 1$, we let $\bv^{\perp}$ be the space of all vectors perpendicular to $\bv$. For $S\subset \bR^{n}$, we write $S^{\perp}$ for the vectors perpendicular to $\bv$ for all $\bv\in S$.

For $r\in \bR^{+}$, $N\ge 1$ we write $\mathcal{B}_{N}(r)$ for the closed Euclidean ball of radius $r$, centre $\mathbf{0}$, in $\bR^{N}$. For a subset $S\subset \bR^{N}$, we let $\mathrm{vol}(S)$ denote its $N$-dimensional volume unless specified otherwise.

Unless specified otherwise, $\tau(n)$ will mean the divisor function
\[\tau(n)=\#\{k\in \bZ^{+}: k \mid n\}\]
and $\omega(n)$ will be the prime divisor function
\[\omega(n)=\#\{p\: \text{prime}: p\mid n\}.\]
These are defined for $n\in \bZ$, where we say that $\tau(0)=\omega(0)=\infty$.
\section{The Density of Locally Soluble Equations}\label{localsolubledensitysection}
In this section we will prove Theorem \ref{positivelocaldensity}. We do this by exhibiting a product formula for the density of $\bV_{d,n}^{\mathrm{ploc}}$. In \cite{bbs2023}, the authors cite a result of Poonen and Voloch \cite[Theorem 3.6]{pv2004} which gives a product formula for the density of $\bV_{d,n}^{\mathrm{loc}}$, using a more general result of Poonen and Stoll \cite[Lemma 20]{ps1999}. However, in \cite{ps1999} and \cite{pv2004} the density is defined using the sup norm, whereas in this paper, and in \cite{bbs2023}, it is defined using the Euclidean norm. Fortunately, \cite[Lemma 20]{ps1999} can easily be generalised to any norm.

Throughout this section, we will use $\mu_{\infty}$ to mean the Lebesgue Measure on $\bR^{N}$ and $\mu_{p}$ to be the Haar measure on $\bQ_{p}^{N}$, normalised so that $\mu_{p}(\bZ_{p}^{N})=1$. For $\bv = (v_{1},\dots,v_{n})\in \bQ_{p}^{N}$, we also define $|\bv|_{p}=\max_{1\le i \le N} |v_{i}|_{p}$. For $\lambda\in \bR$ and $S,T\subset \bR^{N}$, we define $\lambda\cdot S=\{\lambda s: s\in S\}$ and $S\cdot T=\{st:s\in S,\: t\in T\}$.

Let $N\ge 1$, $\mathcal{R}\subset \bR^{N}$ bounded with nonempty interior. Define, for sets $S\subset \bZ^{N}$,
\[\overline{\rho}(S;\mathcal{R})=\limsup_{A\rightarrow \infty} \frac{\#(S\cap A\cdot \mathcal{R})}{\#(\bZ^{N}\cap A\cdot \mathcal{R})}, \quad \underline{\rho}(S;\mathcal{R})=\liminf_{A\rightarrow \infty} \frac{\#(S\cap A\cdot \mathcal{R})}{\#(\bZ^{N}\cap A\cdot \mathcal{R})},\]
and if they are equal, then we define $\rho(S;\mathcal{R})$ to be either. We are now ready to state our generalisation of \cite[Lemma 20]{ps1999}.
\begin{lemma}\label{productformula}
Let $N\ge 1$, $\mathcal{R}\subset [-1,1]^{N}$, $U_{\infty}\subset \bR^{N}$ and for each prime $p$, $U_{p}\subset \bZ_{p}^{N}$. Suppose that $\mathcal{R}$ has nonempty interior and $\mu_{\infty}(\partial \mathcal{R})=0$. Also suppose that $\bR^{+}\cdot U_{\infty}=U_{\infty}$ and $\mu_{\infty}(\partial U_{\infty})=0$ and that for each prime $p$, $\mu_{p}(\partial U_{p})=0$. Let $U_{\infty}^{1}=U_{\infty}\cap \mathcal{R}$ and $s_{\infty}=s_{\infty}(\mathcal{R})=\mu_{\infty}(U_{\infty}^{1})$, $s_{p}=\mu_{p}(U_{p})$ for each prime $p$. Finally, suppose that we have
\begin{equation}\label{Updensitylimit} \lim_{M\rightarrow\infty} \overline{\rho}\left(\left\{ \ba \in \bZ^{N}: \ba \in U_{p} \: \text{for some prime $p>M$}\right\};\mathcal{R}\right)=0.
\end{equation}
Let $S\subset \bZ^{N}$ be the set of all $\ba\in \bZ^{N}$ such that $\ba \notin U_{p}$ for all primes $p$ and also for $p=\infty$. Then $\sum_{p} s_{p}$ converges, $\rho(S;\mathcal{R})$ exists and
\[\rho(S;\mathcal{R})=(1-s_{\infty})\prod_{p\in \mathcal{P}} (1-s_{p}).\]
Furthermore, if the interior of $\mathcal{R}$ contains the origin, then the hypothesis \eqref{Updensitylimit} is independent of the choice of $\mathcal{R}$.
\begin{proof}
When $\mathcal{R}=[-1,1]^{N}$, this is a special case of \cite[Lemma 20]{ps1999}. Looking at the proof of this lemma, we see that the only properties of the set $[-1,1]^{N}$ which are needed are that it is compact and has boundary of measure zero. Hence the result follows when $\mathcal{R}$ is closed. When $\mathcal{R}$ is not closed, the fact that its boundary has measure zero means that we may replace $\mathcal{R}$ with its closure.

For the last part, we note that if the interior of $\mathcal{R}$ contains the origin, then $\mathcal{R}$ contains $[-r,r]^{N}$ for some $r>0$. Then for any set $S$ and any $A>0$, we have $(Ar)\cdot[-1,1]^{N}\subset A\cdot \mathcal{R}\subset A\cdot [-1,1]^{N}$, so
\[\#(S \cap (Ar)\cdot [-1,1]^{N})\le \#(S\cap A\cdot \mathcal{R})\le \#(S\cap A \cdot [1,1]^{N})\]
and it follows that
\[r^{N}\overline{\rho}(S;[-1,1]^{N})\le \overline{\rho}(S;\mathcal{R})\le \overline{\rho}(S;[-1,1]^{N}).\]
Hence, $\lim_{M\rightarrow\infty}\overline{\rho}(\mathcal{S}_{M};\mathcal{R})=0$ if and only if $\lim_{M\rightarrow\infty}\overline{\rho}(\mathcal{S}_{M};[-1,1]^{N})=0$.
\end{proof}
\end{lemma}
\begin{remark}If $\|\cdot\|^{*}$ is some norm on $\bR^{N}$ then we may define
\[\rho(S;\|\ba\|^{*})=\lim_{A\rightarrow\infty} \frac{\#\{\ba \in S: \|\ba\|^{*}\le A\}}{\#\{\ba \in \bZ^{N}: \|\ba\|^{*}\le A\}}\]
and similarly define $\overline{\rho}(S;\|\ba\|^{*})$ and $\underline{\rho}(S;\|\ba\|^{*})$. Then if $\mathcal{R}$ is a closed ball centred on the origin with respect to $\|\cdot\|^{*}$, we see that
\[\rho(S;\mathcal{R})=\rho(S;\|\cdot\|^{*}),\]
and similarly for $\overline{\rho}$, $\underline{\rho}$. The interior of $\mathcal{R}$ must contain the origin, so we find that Lemma \ref{productformula} holds for $\rho(S;\|\cdot\|^{*})$ with the hypothesis \eqref{Updensitylimit} independent of the choice of norm.
\end{remark}
\begin{proof}[Proof of Theorem \ref{positivelocaldensity}]
We use a similar method to the proof of \cite[Lemma 3.6]{pv2004}. We have already dealt with the case $(n,d)=(2,2)$, so suppose $(n,d)\ne (2,2)$. For $\ba\in \bZ^{N_{d,n}}\setminus \{\mathbf{0}\}$, write $V_{\ba}$ for the corresponding element of $\bV_{d,n}$. Recall that this gives a two to one correspondence between $\bZ_{\mathrm{prim}}^{N_{d,n}}$ and $\bV_{d,n}$, so the limit
\[\lim_{A\rightarrow\infty}\frac{\#\bV_{d,n}^{\mathrm{ploc}}(A)}{\#\bV_{d,n}(A)}=\rho_{d,n}^{\mathrm{ploc}}\]
exists if and only if
\begin{equation}\label{densitylimit1}\lim_{A\rightarrow\infty}\frac{\#\left\{\ba\in\bZ_{\mathrm{prim}}^{N_{d,n}}:V_{\ba}\in \bV_{d,n}^{\mathrm{ploc}}(A)\right\}}{\#\left\{\ba\in\bZ_{\mathrm{prim}}^{N_{d,n}}:\Vert\ba\Vert\le A\right\}}\end{equation}
exists, and if they exist then they are equal. It can be shown by a standard argument involving M\"obius inversion (see for example \cite{n1972}), that
\[\#\left\{\ba\in\bZ_{\mathrm{prim}}^{N_{d,n}}:\Vert\ba\Vert\le A\right\}\sim \zeta(N_{d,n})^{-1}\#\left\{\ba\in\bZ^{N_{d,n}}:\Vert\ba\Vert\le A\right\}\]
as $A\rightarrow \infty$. By a similar argument, it can be shown that if there exists $c\in [0,1]$ such that
\[\#\left\{\ba\in\bZ^{N_{d,n}}\setminus \{\mathbf{0}\}:V_{\ba}\in \bV_{d,n}^{\mathrm{ploc}}(A)\right\}\sim cA^{N_{d,n}}\]
then
\[\#\left\{\ba\in\bZ_{\mathrm{prim}}^{N_{d,n}}:V_{\ba}\in \bV_{d,n}^{\mathrm{ploc}}(A)\right\}\sim \frac{c}{\zeta(N^{d,n})}A^{N_{d,n}}.\]
Hence if the limit
\begin{equation}\label{densitylimit2}\lim_{A\rightarrow\infty}\frac{\#\left\{\ba\in\bZ^{N_{d,n}}\setminus \{\mathbf{0}\}:V_{\ba}\in \bV_{d,n}^{\mathrm{ploc}}(A)\right\}}{\#\left\{\ba\in\bZ^{N_{d,n}}:\Vert\ba\Vert\le A\right\}}\end{equation}
exists, then so does \eqref{densitylimit1} and they are equal.

We apply Lemma \ref{productformula} with $\mathcal{R}=\mathcal{B}_{N_{d,n}}(1)$ the unit ball, $U_{\infty}$ the set of $\ba\in\bR^{N_{d,n}}$ such that $f_{\ba}(\bx)=0$ does \emph{not} have a solution in $(\bR^{+})^{n+1}$ and $U_{p}$ the set of $\ba\in\bZ_{p}^{N_{d,n}}$ such that $f_{\ba}(\bx)=0$ does not have a solution in $(\bZ_{p}^{\times})^{n+1}$. Then the set $S$ in Lemma \ref{productformula} is exactly $\{\mathbf{0}\}\cup\{\ba\in\bZ^{N_{d,n}}\setminus \{\mathbf{0}\}:V_{\ba}\in \bV_{d,n}^{\mathrm{ploc}}(A)\}$ so if the hypotheses of the lemma are satisfied, then the limit \eqref{densitylimit2} exists and
\[\rho_{d,n}^{\mathrm{ploc}}=\lim_{A\rightarrow \infty}\frac{\#\bV_{d,n}^{\mathrm{ploc}}(A)}{\#\bV_{d,n}(A)}=(1-s_{\infty})\prod_{p}(1-s_{p}),\]
where $s_{\infty}$ and $s_{p}$ are as in Lemma \ref{productformula}. Since $\sum_{p} s_{p}$ converges, the above product is nonzero if and only if $1-s_{\infty}>0$ and $1-s_{p}>0$ for all $p$.

First, we show that the hypotheses of Lemma \ref{productformula} are satisfied. We begin by showing that $\mu_{p}(\partial U_{p})=0$ for all primes $p$.

We first claim that $U_{p}$ is open. By the compactness of the set $\bZ_{p}^{\times}$ it follows that if $f_{\ba}$ has no zeros in $(\bZ_{p}^{\times})^{n+1}$, then $|f_{\ba}(\bx)|_{p}$ must have a minimum value $m>0$ on this set. For $\bb\in \bZ_{p}^{N_{d,n}}$ with $|\bb|_{p}<m$ and for all $\bx \in (\bZ_{p}^{\times})^{n+1}$, we have $|f_{\bb}(\bx)|_{p}\le |\bb|_{p}<m\le |f_{\ba}(\bx)|_{p}$, so $|f_{\ba+\bb}(\bx)|_{p} = |f_{\ba}(\bx) + f_{\bb}(\bx)|_{p} = |f_{\ba}(\bx)|_{p} >0$ by standard properties of $|\cdot|_{p}$. Hence $U_{p}$ is open. Now suppose that $\ba \in \bZ_{p}^{N_{d,n}}\setminus U_{p}$. Then there is some $\by\in(\bZ_{p}^{\times})^{n+1}$ such that $f_{\ba}(\by)=0$. We want to lift this $\by$ to a solution of $f_{\ba+\bb}(\bx)=0$ when $\bb$ is sufficiently small. When at least one of the partial derivatives $\partial_{i}f_{\ba}(\by)$ is nonzero and $\bb$ is sufficiently small relative to it, Hensel's lemma allows us to lift $\by$ to a solution $\bz$ and also ensure that $\bz\in (\bZ_{p}^{\times})^{n+1}$. This means that $\ba$ has a neighbourhood contained in $\bZ_{p}^{N_{d,n}} \setminus U_{p}$ and so does not lie in the closure of $U_{p}$. Hence, we must have $\partial U_{p}\subset \{\ba \in \bQ_{p}^{N_{d,n}}: \exists\bx \in \bQ_{p}^{n+1}\setminus\{0\},\:\nabla f_{\ba}(\bx)=\mathbf{0}\}$. By a result in elimination theory, \cite[Theorem 5.7A]{hartshorne}, we have that this is an algebraic set (i.e. it is given by the set of common zeros of some polynomials). It can be shown by Fubini's theorem and induction on $N$, that an algebraic set in $\bQ_{p}^{N}$ is either the whole space or has measure 0, so it just remains to show that there is at least one $\ba$ where there is no nonzero solution to $\nabla f_{\ba}(\bx)=\mathbf{0}$. One such example is $f_{\ba}(\bx)=x_{0}^{d}+...+x_{n}^{d}$.

Now we show that $\mu_{\infty}(\partial U_{\infty})=0$. If $\ba$ is such that there is no solution to $\bx \in \bS^{n}\cap \bR_{\ge 0}^{n+1}$, then by compactness, $f_{\ba}$ must have a minimum value $m>0$ on this set. Hence, as in the $U_{p}$ case, there is a neighbourhood of $\ba$ which is contained in $U_{\infty}$. This means that, denoting the interior of $U_{\infty}$ by $U_{\infty}^{\mathrm{o}}$,
\begin{equation}\label{interiorUinf}U_{\infty}\setminus U_{\infty}^{\mathrm{o}} \subset \left\{\ba\in\bR^{N_{d,n}}:\begin{array}{l l}\exists \bx\in\bS^{n} \cap (\bR_{\ge 0})^{n+1}\: f_{\ba}(\bx)=0\\ \nexists \bx\in(\bR^{+})^{n+1}\: f_{\ba}(\bx)=0\end{array}\right\}.\end{equation}
This set is similar to the set $\mathscr{B}_{d,n,0}$ which we will define later in \eqref{scrBdef}. In Lemma \ref{badvolbound}, it will be proven that $\mathscr{B}_{d,n,0}$ has measure zero, and it can easily be deduced from this that the right hand side of \eqref{interiorUinf} has measure zero.

For the closure, if $\ba \in \bR^{N_{d,n}}$ such that there exists $\bx \in (\bR^{+})^{n+1}$ with $f_{\ba}(\bx)=0$ and $\nabla f_{\ba}(\bx) \ne \mathbf{0}$, then we claim that for $\bb$ sufficiently small (in terms of $\ba$), there will be a solution $\by$ to $f_{\ba+\bb}(\by)=0$ with $\by \in (\bR^{+})^{n+1}$. From this, it follows that $\ba$ is not in the closure of $U_{\infty}$. To prove the claim, let $\bd=\nabla f_{\ba}(\bx)$ and $\by=\by(t)=\bx+t\bd$ for some $|t|\ll 1$ to be chosen. We have that $\nabla f_{\ba+\bb}(\bx)=\bd+O(\|\bb\|)$, and if $\|\bb\|\ll 1$ then all higher order derivatives are $O(1)$. So, Taylor expanding in the variable $t$ and recalling that $f_{\ba}(\bx)=0$, we have
\[f_{\ba+\bb}(\by)=f_{\bb}(\bx)+t\langle \bd,\bd\rangle+O(|t|\|\bb\|)+O(|t|^{2}).\]
If we consider some small $\delta>0$ and suppose $\|\bb\|,|t|\le \delta$, then the error terms are $O(\delta^{2})$. Let $t_{0}=-f_{\bb}(\bx)/\langle \bd,\bd\rangle$ and for $C>0$ some constant, let $t_{\pm}=t_{0}\pm C\delta^{2}$ and suppose that $\|\bb\|$ and $\delta$ are small enough in terms of $C$ that $|t_{\pm}|\le \delta$. Then we have $f_{\ba+\bb}(\by(t_{\pm}))=\pm C \delta^{2}+O(\delta^{2})$, so for $C$ large enough, $f_{\ba+\bb}(\by(t_{+}))>0$ and $f_{\ba+\bb}(\by(t_{-}))<0$. By the intermediate value theorem, there exists $t$ such that $f_{\ba+\bb}(\by)=0$. Also, since, $\|\by-\bx\|\le \delta$, if $\delta$ is small enough then $\by\in (\bR^{+})^{n+1}$. This proves the claim.

Hence, writing $\overline{U_{\infty}}$ as the closure of $U_{\infty}$:
\[\overline{U_{\infty}} \setminus U_{\infty}\subset \left\{ \ba\in\bR^{N_{d,n}}:\exists \bx\in \bR^{n+1}\setminus \{\mathbf{0}\}\: \begin{array}{l l}f_{\ba}(\bx)=0,\\ \nabla f_{\ba}(\bx)=\mathbf{0}\end{array} \right\}.\]
By \cite[Theorem 5.7A]{hartshorne} again, this can be shown to have measure 0. Hence $\partial U_{\infty}$ has measure 0.

To show \eqref{Updensitylimit}, we need to show that the density of the set
\[\mathcal{S}_{M}=\{\ba\in\bZ^{N_{d,n}}:\ba\in U_{p}\text{ for some finite $p>M$}\}\]
tends to $0$ as $M\rightarrow\infty$. Since \eqref{Updensitylimit} is independent of $\mathcal{R}$ as long as the interior of $\mathcal{R}$ contains the origin, we may consider $\overline{\rho}(\mathcal{S}_{M};[-1,1]^{N})$, which means we can use results from \cite{ps1999}.
If the (mod $p$) reduction of the hypersurface $f_{\ba}=0$ has a smooth point in $(\bbF_{p}^{\times})^{n+1}$, then by Hensel's lemma this lifts to a point in $(\bZ_{p}^{\times})^{n+1}$, which means $f_{\ba}$ is solvable over $\bZ_{p}^{\times}$. If $f_{\ba}$ is absolutely irreducible and $p$ is sufficiently large, then the Lang-Weil estimate \cite[Theorem 1]{lw1954} implies that the number of points over $\bbF_{p}$ is
\[p^{n}+O(p^{n-1/2}),\]
with the implied constant depending at most on $n$ and $d$. Then \cite[Lemma 1]{lw1954} tells us that a variety (not necessarily irreducible), of dimension $r$ which is defined by polynomials over $\bbF_{p}$ of total degree at most $m$ will have at most $O_{r,m}(p^{r})$ points over $\bbF_{p}$. When $p>d$, the singular points on $f_{\ba}=0$ define a proper subvariety, which will have dimension at most $n-1$ and so contains $O(p^{n-1})$ points over $\bbF_{p}$. Similarly, since $d>1$, the intersection of $f_{\ba}=0$ with any of the hyperplanes $x_{i}=0$ is a subvariety of dimension $n-1$ and so each of these will contain $O(p^{n-1})$ points over $\bbF_{p}$.

It follows that the number of smooth points of $f_{\ba}=0$ over $\bbF_{p}^{\times}$ is
\[p^{n}+O(p^{n-1/2}),\]
which is strictly positive when $p$ is sufficiently large in terms of $d$, $n$.
It follows from our earlier discussion that when $p$ is sufficiently large, $\ba\in U_{p}$ implies that $f_{\ba}$ is not absolutely irreducible (mod $p$). We claim that there exist polynomials $g_{1},\dots,g_{m}\in \bZ[A_{0},\dots,A_{N_{d,n}-1}]$ such that for any field $K$, $f_{\ba}$ is reducible over $\overline{K}$ if and only if $g_{1}(\ba)=\cdots=g_{m}(\ba)=0$ in $K$. This follows from a result of Noether (see \cite[Chapter V, Theorem 2A]{s1976}), which gives a similar result for polynomials that are not assumed to be homogeneous. In particular, this result gives us integer polynomials $\tilde{g}_{1},\dots,\tilde{g}_{m}$ in variables $B_{i_{0},\dots,i_{n}}$, for $i_{0}+\cdots+i_{n}\le d$ which vanish if and only if the polynomial $\sum B_{i_{0},\dots,i_{n}}x_{0}^{i_{0}}\cdots x_{n}^{i_{n}}$ either is reducible over $\overline{K}$ or has degree at most $d-1$. The claim follows by setting the variables $B_{i_{0},\dots,i_{n}}=0$ for $i_{0}+\cdots+i_{n}<d$.

We may suppose that $g_{1},\dots,g_{m}$ are pairwise relatively prime. In the proof of \cite[Theorem 3.6]{pv2004} it is shown that the variety consisting of $\ba$ such that $f_{\ba}$ is reducible over $\overline{\bQ}$ has codimension at least $2$, which implies that $m\ge 2$. Hence, \cite[Lemma 21]{ps1999} implies that $\overline{\rho}(\mathcal{S}_{M};[-1,1]^{N})$ tends to $0$ as required.

It just remains to show that $1-s_{\infty}>0$ and $1-s_{p}>0$ for all primes $p$. Consider the vector $\ba$ corresponding to $f_{\ba}(\bx)=x_{0}^{d}-x_{1}^{d}$. This has a solution $(1,\dots,1)$ and if $|\bb|_{p}$ is sufficiently small then this may be lifted by Hensel's lemma to a solution to $f_{\ba+\bb}(\bx)=0$ in $(\bZ_{p}^{\times})^{n+1}$. Hence the complement of $U_{p}$ contains an open ball and so has positive measure, so $1-s_{p}>0$. For $U_{\infty}$ we use $f_{\ba}=(x_{0}^{d}-x_{1}^{d})/2$ and the argument is similar, but we use the mean value theorem and intermediate value theorem in place of Hensel's lemma.
\end{proof}
\section{Bounds on Prime Vectors in Lattices}\label{latticeboundsection}
In this section we will prove Theorem \ref{mainvar}. As mentioned earlier, we use the method of \cite[Section 4]{bbs2023}, but first we need an upper bound for counting prime points in lattices. For this, we use a modified form of a result of Tulyaganova and Fainleib \cite{tf1993} (which uses Selberg's sieve). In the rest of the paper, $p$ will always mean a prime variable unless stated otherwise.

Given an integer $m\times n$ matrix $A$, and integer vector $\bb\in \bZ^{m}$ we let $A'$ be the $m\times (n+1)$ matrix made by attaching $\bb$ to $A$ as an additional column. When the $j$th row of $A$ is nonzero, we let $d_{j}$ be the gcd of its entries and otherwise we let $d_{j}=0$. Similarly, we let $d_{i,j}$ be the gcd of the $2\times 2$ minors of the $i$th and $j$th rows of $A'$ when these numbers are not all $0$, and $d_{i,j}=0$ otherwise (note that $d_{i,j}\ne 0$ if and only if the $i$th and $j$th rows are linearly independent). We then define the quantity $g(A,\bb)$ to be $\prod_{j=1}^{m} d_{j}\prod_{1\le i<j\le m} d_{i,j}$. If $g(A,\bb)\ne 0$ then we define
\begin{equation}\label{fcdef}\fc(A,\bb) = \exp \left(m\sum_{p\mid g(A,\bb)} p^{-1}\right),\end{equation}
and otherwise we let $\fc(A,\bb)=\infty$.

If we let $\omega_{t}(k)=\sum_{p\mid k} p^{t}$ for $t \in \bR$, $k\in \bZ \setminus{\{0\}}$ and $\omega_{t}(0)=\sum_{p} p^{t}$ (which may be infinite), then we have
\[\fc(A,\bb)=\exp\left(m\omega_{-1}(g(A,\bb))\right).\]
We have $g(A,\bb)\ll (\Vert A\Vert+\Vert \bb\Vert)^{O(1)}$. We also have, letting $p_{j}$ be the $j$th prime and $\omega(k)=\omega_{0}(k)$, if $k\ne 0$ then:
\[\omega_{-1}(k)\le \sum_{j=1}^{\omega(k)} \frac{1}{p_{j}} \le \log_{(2)}\omega(k)+O(1)\le \log_{(3)} k+O(1).\]
Therefore, if $\fc(A,\bb)\ne \infty$ then
\begin{equation}\label{fcgeneralupperbound}\fc(A,\bb)\ll (\log_{(2)} (\Vert A\Vert +\Vert \bb\Vert))^{m}.\end{equation}
However, $\omega(k)$ is usually of size $O(\log_{(2)} k)$, so we would expect that for most $A$ and $\bb$, this bound can be significantly improved to $\fc(A,\bb)\ll (\log_{(3)} (\Vert A\Vert +\Vert \bb\Vert))^{m}$. We will make this more precise in Lemma \ref{badlatticecount}.

The following lemma is a modified form of \cite[Theorem 1]{tf1993}.
\begin{lemma}\label{matrixprimecount}
Let $A$ be an $m\times n$ integer matrix, $\by=(y_{1},\dots,y_{n})$, $\bb=(b_{1},\dots,b_{m})$ be integer vectors, $N_{1},\dots,N_{n}\ge 2$ and $N=\min_{j} N_{j}$. 

Define $\pi(N_{1},\dots,N_{n},\by,A,\bb)$ to be the number of $\bx=(x_{1},\dots,x_{n})\in\bZ^{n}$ such that $|x_{j}-y_{j}|\le N_{j}$ for each $j$ and $A\bx+\bb\in\mathcal{P}^{m}$.

Then
\[\pi(N_{1},\dots,N_{n},\by,A,\bb)\ll \frac{N_{1}\cdots N_{n}}{(\log N)^{m}}\fc(A,\bb),\]
where the implicit constant depends only on $m$ and $n$.
\begin{proof}
Without loss of generality, assume that $N_{1}\le N_{2}\le \cdots \le N_{n}$ so that $N=N_{1}$. We may also assume that $\fc(A,\bb)<\infty$, which implies that the rows of $A$ are nonzero and the rows of $A'$ are pairwise linearly independent.

Write $a_{i,j}$ for the $i,j$-entry of $A$ and $\br_{i}$ for the $i$th row. Then $\pi(N_{1},\dots,N_{n},A,\bb)$ can be thought of as counting the values of $\bx=(x_{1},\dots,x_{n})$ such that $a_{i,1}x_{1}+\cdots+a_{i,n}x_{n}+b_{i}=\langle \bx, \br_{i}\rangle+b_{i}$ is simultaneously prime for all $i$.

Suppose that one of the rows of $A'$ is not a primitive vector. Then for some $i$, $\gcd(a_{i,1},\dots,a_{i,n},b_{i})=p>1$, where we may assume that $p$ is prime because $p$ always divides $\langle\bx, \br_{i}\rangle+b_{i}$, and so if $p$ isn't prime then $\pi$ is identically zero. Then $\langle\bx, \br_{i}\rangle+b_{i}$ is prime if and only if $\langle\bx, \br_{i}\rangle+b_{i}=p$, which means $\bx$ lies in an affine hyperplane, and it is not hard to show that the number of $\bx$ with $|x_{j}-y_{j}|\le N_{j}$ which lie on a given affine hyperplane is at most $O(N_{2}\cdots N_{n})$, which is small enough. Hence, we may suppose that all of the rows of $A'$ are primitive.

We are now in the situation of what the authors of \cite{tf1993} call the class $P$, and so we would like to use the main result of that paper. We let $\lambda_{j}(\bv)$ denote the $j$th coordinate of a vector $\bv$ and define $\delta(p)$ to be the number of $\bx\in(\bZ/p\bZ)^{n}$ such that $\lambda_{j}(A\bx + \bb) \equiv 0$ (mod $p$) for some $j$. Then the following holds when $N_{1}=\cdots=N_{n}$:
\[\pi(N_{1},\dots,N_{n},\by,A,\bb)\ll \frac{N_{1} \cdots N_{n}}{(\log N_{1})^{m}}\exp \left(\sum_{p\mid g(A,\bb)}\frac{mp^{n-1}-\delta(p)}{p^{n}}\right).\]
What we actually need is for the above to hold for all $N_{1}\le \cdots \le N_{n}$ not necessarily equal, but this can be achieved by essentially the same argument as in \cite{tf1993}. Most of the changes are merely notational, such as replacing $\omega(p)$ with $\delta(p)$, $x_{0}$ with $\by$, $n$ with $m$, $k$ with $n$ and transposing the matrix $A$. The main lemma requires no modification. In the proof of Theorem 1, the only other changes necessary are to replace occurrences of the condition $\| x - x_{0} \| \le N/2$ (or $\|\bx -\by\| \le N/2$ in our notation) with the condition that for each $j$, the $j$th coordinate of $x -x_{0}$ lies in the interval $[-N_{j}/2,N_{j}/2]$, and to replace occurrences of $N^{k}$ with $N_{1}\cdots N_{n}$ and $N^{k-1}$ with $N_{2} \cdots N_{n}$. To complete the proof, note that $\delta(p)\ge 0$ and so
\[\exp\left(\sum_{p\mid g(A,\bb)} \frac{mp^{n-1}-\delta(p)}{p^{n}}\right)\le \fc(A,\bb).\]
\end{proof}
\end{lemma}

Given a lattice $\Lambda$ of rank $r$ and $1\le i\le r$, we define:
\[\lambda_{i}(\Lambda)=\inf\{u\in\bR^{+}:\dim(\Span_{\bR}(\Lambda\cap\mathcal{B}_{N}(u)))\ge i\}.\]
We call this the $i$th successive minimum.

\begin{lemma}\label{basiscoeffbound}
Suppose $\Lambda\subset \bZ^{m}$ is a primitive lattice of rank $r$. Then there exists a basis $\bv_{1},\dots,\bv_{r}$ for $\Lambda$ such that
\begin{equation}\label{basissize}\| \bv_{i}\| \asymp \lambda_{i}(\Lambda) \:\text{for $1\le i \le r$}
\end{equation}
with implied constants depending only on $m$. We may also ensure that the distance from $\bv_{i}$ to the space $(\Span_{\bR}(\{\bv_{1},\dots,\bv_{i-1}\}))^{\perp}$ is at most
\begin{equation}\label{basisperpbound}\frac{1}{2}\left(\|\bv_{1}\|+\dots+\|\bv_{i-1}\|\right).
\end{equation}

Now let $\bv_{1},\dots,\bv_{r}$ be any basis for $\Lambda$ that satisfies \eqref{basissize}. For $\bx\in \Lambda$ with $\Vert \bx\Vert \le X$ we may write it as $\bx=x_{1}\bv_{1}+\cdots+x_{r}\bv_{r}$ (with $x_{j}\in\bZ$ for all $j$). Then
\begin{equation}\label{latticecoordinatebound}|x_{j}|\ll \frac{X}{\lambda_{j}(\Lambda)}\end{equation}
for all $1\le j\le r$. The implied constants depend only on $m$ and on the implied constants in \eqref{basissize}.
\begin{proof}
Let $\bv_{1}$ be a nonzero vector of minimal length in $\Lambda$. We say that a set of vectors $\bc_{1},\dots,\bc_{k}\in\Lambda$ is primitive if for any $a_{1},\dots,a_{k}\in\bR$ such that $a_{1}\bc_{1}+\cdots+a_{k}\bc_{k}\in \Lambda$, we have $a_{1},\dots,a_{k}\in \bZ$. Following the construction given at the beginning of \cite[Lecture X, section 5]{sc1989} we may inductively define a ``reduced basis''. Indeed, given $\bv_{1},\dots,\bv_{i-1}$, we consider the (nonempty) set of $\bv\in \Lambda$ such that $\bv_{1},\dots,\bv_{i-1},\bv$ is primitive, and then let $\bv_{i}$ be of minimal length in this set. It follows from \cite[Lecture X, Lemma 2]{sc1989} that $\Vert \bv_{i}\Vert\le (3/2)^{i-1}\lambda_{i}(\Lambda)$ for $1\le i\le r$ (though the lemma is only stated for $\Lambda$ of full rank, the proof still works in the more general case). We also have the trivial lower bound $\Vert \bv_{i}\Vert\ge \lambda_{i}(\Lambda)$, so $\Vert\bv_{i}\Vert\asymp \lambda_{i}(\Lambda)$.

To obtain \eqref{basisperpbound}, we proceed by induction and suppose that the bound holds for $i$ up to some $j<r$. Then we may write $\bv_{j+1}=\bu+t_{1}\bv_{1}+\dots+t_{j}\bv_{j}$ for some $\bu\in (\Span\{\bv_{1},\dots,\bv_{j}\})^{\perp}$ and $t_{i}\in \bR$. We then let $n_{i}$ be the closest integer to $t_{i}$ and replace $\bv_{j+1}$ with $\bw=\bv_{j+1}-n_{1}\bv_{1}-\dots-n_{j}\bv_{j}$. This doesn't change the span and we still have $\|\bw\| \asymp \lambda_{j+1}(\Lambda)$ after possibly modifying the implied constants. Clearly $\|\bw-\bu\| \le (\|\bv_{1}\|+\dots+\|\bv_{j}\|)/2$ which gives the bound \eqref{basisperpbound}.

For the bound on $x_{i}$ we follow a similar argument to the proof of \cite[Lemma 5, part (iii)]{d1963}. We use Minkowski's second theorem (see for example \cite[Chapter VIII, Theorem V]{c1997}), which states that, for any lattice $\Lambda$ of rank $r$,
\begin{equation}\label{minkowski2}\det(\Lambda)\le \lambda_{1}(\Lambda)\cdots \lambda_{r}(\Lambda)\ll_{r}\det(\Lambda).\end{equation}
Let $\Lambda_{j}$ be the integer span of the vectors $\{\bv_{i}: 1\le i \le r, \: i\ne j\}$. Recalling the definition of $\det \Lambda$ as the volume of a parallelepiped, we see that the perpendicular distance of $\bv_{j}$ to the vector space spanned by $\Lambda_{j}$ is exactly $\det \Lambda / \det \Lambda_{j}$. For all $j$, the perpendicular distance of $\bx$ to this vector space is therefore $|x_{j}| \det \Lambda / \det \Lambda_{j}$. The perpendicular distance from $\bx$ to any vector space is at most $\bx$, so $|x_{j}| \det \Lambda / \det \Lambda_{j} \le \|\bx\|$. Since $\{\bv_{i}: 1\le i\le r, \: i\ne j\}$ is a basis for $\Lambda_{j}$ it follows that
\[\det \Lambda_{j}\le \prod_{\substack{1\le i \le r\\ i\ne j}} \|\bv_{i}\| \ll \prod_{\substack{1\le i \le r\\ i\ne j}} \lambda_{i}(\Lambda).\]
Then by Minkowski's second theorem,
\[\frac{\det \Lambda}{\det \Lambda_{j}} \gg \lambda_{j}(\Lambda),\]
so
\[X \ge \| \bx \| \gg |x_{j}| \lambda_{j}(\Lambda).\]
\end{proof}
\end{lemma}

Now suppose we have a primitive lattice $\Lambda\subset \bZ^{m}$ of rank $r$. We define $\pi(\Lambda,X)=\#\left(\mathcal{B}_{m}(X)\cap \Lambda\cap \mathcal{P}^{m}\right)$. For $\bv_{1},\dots,\bv_{r}$ given by Lemma \ref{basiscoeffbound}, we may form a matrix $A$ by taking its columns to be $\bv_{1},\dots,\bv_{r}$. We then define $\fc(\Lambda)$ to be the largest value of $\fc(A,\mathbf{0})$ as $\bv_{1},\dots,\bv_{r}$ ranges over all the possible choices of basis which satisfy \eqref{basissize} and \eqref{basisperpbound}. Whenever we choose a basis $\bv_{1},\dots,\bv_{r}$ coming from Lemma \ref{basiscoeffbound}, we may suppose that $\fc(A,\mathbf{0})=\fc(\Lambda)$.

\begin{lemma}\label{latticeprimecount}
Suppose $\Lambda$ is a primitive lattice of rank $r$ lying in $\bZ^{m}$. Suppose $\eps>0$, $1\le i_{0}\le r$ and $\lambda_{i_{0}}(\Lambda)\le X^{1-\eps}$. Then
\[\pi(\Lambda,X)\ll_{\eps} \frac{X^{r}}{(\log X)^{m}\lambda_{1}(\Lambda)\cdots \lambda_{i_{0}-1}(\Lambda)\lambda_{i_{0}}(\Lambda)^{r-i_{0}+1}}\fc(\Lambda),\]
where the implied constant depends only on $m$, $r$ and $\eps$.
\begin{proof}
Pick a basis $\bv_{1},\dots,\bv_{r}$ provided by Lemma \ref{basiscoeffbound}. If $A$ is the $m\times r$ matrix whose columns are the $\bv_{j}$ then every $\bz\in \Lambda$ can be written (uniquely) as $A\bx$ where $\bx=(x_{1},\dots,x_{r}) \in \bZ^{r}$ and $\bz=x_{1}\bv_{1}+\cdots+x_{r}\bv_{r}$. If $\Vert \bz \Vert \le X$ then by Lemma \ref{basiscoeffbound}, there exists a constant $C$ such that whenever $\lambda_{j}(\Lambda)\le X$, we have $|x_{j}|\le CX/\lambda_{j}(\Lambda)$. We then apply Lemma \ref{matrixprimecount} with
\[N_{j}=\frac{CX}{\lambda_{j}(\Lambda)}\]
for $j\le i_{0}$, and $N_{j}=N_{i_{0}}$ for all larger $j$. The reason for this choice is so that $N_{j}\ge X^{\eps}$ for all $j$. We have
\[\pi(\Lambda,X)\le \pi(N_{1},\dots,N_{r},\mathbf{0},A,\mathbf{0})\ll \frac{N_{1}\cdots N_{r}}{(\eps \log X)^{m}}\fc(A,\mathbf{0}).\]
We also have that $\fc(A,\mathbf{0})\le\fc(\Lambda)$. The result now follows by the definition of $N_{j}$.
\end{proof}
\end{lemma}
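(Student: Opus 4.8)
The plan is to convert $\pi(\Lambda,X)$ into a count of integer points $\bx\in\bZ^{r}$ lying in a box and satisfying $A\bx\in\mathcal{P}^{m}$, and then to apply Lemma \ref{matrixprimecount}. First I would use Lemma \ref{basiscoeffbound} to fix a basis $\bv_{1},\dots,\bv_{r}$ of $\Lambda$ with $\Vert\bv_{i}\Vert\asymp\lambda_{i}(\Lambda)$, and let $A$ be the $m\times r$ integer matrix whose columns are the $\bv_{j}$; then every $\bz\in\Lambda$ is uniquely $A\bx$, and Lemma \ref{basiscoeffbound} supplies a constant $C=C(r,m)$ with the property that $\Vert\bz\Vert\le X$ forces $|x_{j}|\le CX/\lambda_{j}(\Lambda)$ for every $j$ with $\lambda_{j}(\Lambda)\le X$. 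Hence $\pi(\Lambda,X)$ is at most the number of $\bx$ in a suitable box for which $A\bx$ has all coordinates prime, and (discarding a trivial case) we may assume $\fc(\Lambda)<\infty$.

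The delicate point is the choice of box side-lengths $N_{j}$ in Lemma \ref{matrixprimecount}, since that lemma only gives a saving by $(\log N)^{m}$ with $N=\min_{j}N_{j}$, so every $N_{j}$ must exceed a fixed power of $X$. For $j\le i_{0}$ I would take $N_{j}=CX/\lambda_{j}(\Lambda)$; as the successive minima are nondecreasing this gives $N_{j}\ge N_{i_{0}}=CX/\lambda_{i_{0}}(\Lambda)\ge CX^{\eps}$, using the hypothesis $\lambda_{i_{0}}(\Lambda)\le X^{1-\eps}$. For $j>i_{0}$ I would simply set $N_{j}=N_{i_{0}}$: this still covers every admissible $x_{j}$ because $|x_{j}|\le CX/\lambda_{j}(\Lambda)\le CX/\lambda_{i_{0}}(\Lambda)=N_{i_{0}}$ for $j\ge i_{0}$, again by monotonicity of the $\lambda_{j}(\Lambda)$. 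With $N=N_{i_{0}}\ge CX^{\eps}$ we have $\log N\gg_{\eps}\log X$, so Lemma \ref{matrixprimecount} (with $\bb=\mathbf 0$) gives $\pi(\Lambda,X)\ll N_{1}\cdots N_{r}(\log N)^{-m}\fc(A,\mathbf 0)$. It then remains only to unwind the bookkeeping: $N_{1}\cdots N_{r}=\prod_{j\le i_{0}}\bigl(CX/\lambda_{j}(\Lambda)\bigr)\cdot\bigl(CX/\lambda_{i_{0}}(\Lambda)\bigr)^{r-i_{0}}=C^{r}X^{r}\bigl(\lambda_{1}(\Lambda)\cdots\lambda_{i_{0}-1}(\Lambda)\,\lambda_{i_{0}}(\Lambda)^{r-i_{0}+1}\bigr)^{-1}$, while $(\log N)^{m}\gg_{\eps}(\log X)^{m}$ and $\fc(A,\mathbf 0)\le\fc(\Lambda)$ straight from the definition of $\fc(\Lambda)$ as the maximum of $\fc(\cdot,\mathbf 0)$ over bases with vectors of the correct size. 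Combining these yields the stated bound.

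I expect the main (and only mild) obstacle to be verifying that truncating $N_{j}$ at $N_{i_{0}}$ for $j>i_{0}$ loses nothing, i.e.\ that the coefficient bound of Lemma \ref{basiscoeffbound} together with $\lambda_{i_{0}}(\Lambda)\le\cdots\le\lambda_{r}(\Lambda)$ really captures every relevant $\bx$, and in tracking the exponents so that precisely the factor $\lambda_{i_{0}}(\Lambda)^{r-i_{0}+1}$ emerges in the denominator. All the genuine number-theoretic input is already packaged in Lemma \ref{matrixprimecount}, so beyond this bookkeeping there is no further analytic difficulty.
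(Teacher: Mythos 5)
Your proposal is correct and follows essentially the same route as the paper's proof: the same basis from Lemma \ref{basiscoeffbound}, the same choice $N_{j}=CX/\lambda_{j}(\Lambda)$ for $j\le i_{0}$ truncated to $N_{i_0}$ for $j>i_{0}$, and the same application of Lemma \ref{matrixprimecount} with $\fc(A,\mathbf{0})\le\fc(\Lambda)$. (You even correct the paper's typo on the dimensions of $A$, which should indeed be $m\times r$.)
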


Let $S_{r,n}(s_{1},..,s_{r})$ be the set of all primitive lattices $\Lambda\subset \bZ^{n+1}$ of rank $r$ such that $\lambda_{j}(\Lambda)\in(s_{j}/2,s_{j}]$ for all $j\in\{1,\dots,r\}$ and define $\widehat{S}_{r,n,c,T}(s_{1},\dots,s_{r})$ to be the set of all $\Lambda\in S_{r,n}(s_{1},\dots,s_{r})$ such that $\fc(\Lambda)\ge c(\log_{(3)} T)^{n+1}$. We wish to bound the size of $\widehat{S}$ and for our purposes, it suffices to do so in the rank $2$ case, which we do in Lemma \ref{badlatticecount}. First we need some bounds for sums involving the divisor function.
\begin{lemma}\label{divisorsumlemma}
 Let $Z\ge 1$ and $t\in \bN$. Then
 \begin{equation}\label{powerdivisorsum}\sum_{1\le z\le Z} \tau(z)^{t}\ll_{t} Z(\log Z)^{2^{t}-1}.\end{equation}
Furthermore, if $U_{1},V_{1},U_{2},V_{2}\ge 1$ such that $U_{1}V_{2}=U_{2}V_{1}=Z$. Then
\begin{equation}\label{divisordetsum}\sum_{\substack{1\le |x|\le U_{1},\: 1\le |y|\le U_{2}\\1\le |z|\le V_{1},\: 1\le |w|\le V_{2}\\xw-yz\ne 0}} \tau(xw-yz)\ll Z^{2}(\log Z)^{4}.\end{equation}
\begin{proof}
The first part is proved in, for example, \cite[Lemma 2.5]{hua}.

For \eqref{divisordetsum}, we have,
\begin{align*}\sum_{\substack{1\le |x|\le U_{1},\: 1\le |y|\le U_{2}\\1\le |z|\le V_{1},\: 1\le |w|\le V_{2}\\xz-yw\ne 0}} \tau(xw-yz)&\le 4\sum_{\substack{1\le |u|,|v|\le Z\\u\ne v}} \tau(u)\tau(v)\tau(u-v)\\&= 4\sum_{1\le |u|\le Z}\tau(u)\sum_{\substack{1\le |v|\le Z\\v\ne u}}\tau(v)\tau(u-v).\end{align*}
We now finish by applying the Cauchy-Schwarz inequality to the inner sum and using \eqref{powerdivisorsum} with $t=2$ and $t=1$.
\end{proof}
\end{lemma}

\begin{lemma}\label{badlatticecount}
Let $n\ge 2$ and $K>0$. Then there exists $c>0$ such that for all $1\le s_{1}\le s_{2}\le T$,
\[\#\widehat{S}_{2,n,c,T}(s_{1},s_{2})\ll_{K} \frac{s_{1}^{n+2}s_{2}^{n}}{(\log T)^{K}}+s_{1}^{n+1}s_{2}^{n}.\]
\begin{proof}
Given some $\Lambda\in \widehat{S}_{2,n,c,T}(s_{1},s_{2})$, we get a basis $\bv_{1},\bv_{2}$ from Lemma \ref{basiscoeffbound} such that $\| \bv_{i}\| \asymp \lambda_{i}(\Lambda) \asymp s_{i}$ for $i \in \{1,2\}$ and $\bv_{2}$ lies within a distance $\| \bv_{1}\|$ of the hyperplane $V_{1}=\bv_{1}^{\perp}$. Let $0<\alpha<\beta$ such that $\alpha s_{j}\le \Vert \bv_{j}\Vert\le \beta s_{j}$ for $j\in\{1,2\}$. Let $\fc(\bv_{1},\bv_{2})=\fc(A,\mathbf{0})$ where $A$ is the $(n+1)\times 2$ matrix whose columns are $\bv_{1},\bv_{2}$. The quantity $\#\widehat{S}_{2,n,c,T}(s_{1},s_{2})$ is therefore bounded above by the cardinality of the set $\mathcal{N}$ of linearly independent pairs of vectors $\bv_{1},\bv_{2}$ such that $\alpha s_{j}\le \Vert \bv_{j}\Vert \le \beta s_{j}$ for $j\in \{1,2\}$ and $\bv_{2}$ is within a distance $\beta s_{1}$ of the hyperplane $V_{1}$ and $\fc(\bv_{1},\bv_{2})\ge c(\log_{(3)} T)^{n+1}$.

Recalling the definition of $g=g(A,\mathbf{0})$ as a product of numbers $d_{j}$, $d_{i,j}$, we see that if $\fc(\bv_{1},\bv_{2})$ is large then at least one of these $d_{j}$ or $d_{i,j}$ must have a lot of distinct prime factors. More precisely, we claim that if $c$ and $T$ are sufficiently large and $\fc(\bv_{1},\bv_{2})\ge c(\log_{(3)} T)^{n+1}$, then
\begin{align}\label{frakcsizeclaim} &\tau(d_{j})\ge (\log T)^{c\log 2}\:\text{for some $1\le j \le n+1$,}\\
\nonumber\text{or}\:&\tau(d_{i,j})\ge (\log T)^{c\log 2}\:\text{for some $1\le i<j \le n+1$.}
\end{align}
To see this, first 
\[\omega_{-1}(g)\ge \log_{(4)} T+\frac{1}{n+1}\log c.\]
Note that $\omega_{-1}(0)=+\infty$. Now recalling that $\log_{(2)}\omega(k)\ge \omega_{-1}(k)-O(1)$, we have, for $c$ sufficiently large:
\[\omega(g)\ge (\log_{(2)} T)^{c^{1/(n+2)}}.\]
If we choose $c$ and $T$ sufficiently large, we get
\begin{equation}\label{omegaglowerbound} \omega(g)\ge c\frac{(n+1)(n+2)}{2}\log_{(2)} T.\end{equation}
We also have
\[\omega(g)\le \sum_{j=1}^{n+1} \omega(d_{j})+\sum_{1\le i<j\le n+1} \omega(d_{i,j}).\]
So \eqref{omegaglowerbound} implies that $\omega(d_{j})\ge c \log_{(2)} T$ for some $j$ or $\omega(d_{i,j})\ge c \log_{(2)} T$ for some $i<j$. We also have $\tau(k)\ge 2^{\omega(k)}$ for all $k\ge 1$. We then have the claim \eqref{frakcsizeclaim}.

Let $\mathcal{N}_{j}$ be the set of $(\bv_{1},\bv_{2})\in\mathcal{N}$ such that $\tau(d_{j})\ge (\log T)^{c\log 2}$ and $\mathcal{N}_{i,j}$ the set of $(\bv_{1},\bv_{2})\in\mathcal{N}$ such that $\tau(d_{i,j})\ge (\log T)^{c\log 2}$. Then for sufficiently large $c$ and $T$:
\[\mathcal{N}=\bigcup_{j=1}^{n+1} \mathcal{N}_{j}\cup \bigcup_{1\le i<j\le n+1}\mathcal{N}_{i,j},\]
so it is enough to show that, for $c$ sufficiently large in terms of $K$,
\[\#\mathcal{N}_{j}\ll_{K}\frac{s_{1}^{n+2}s_{2}^{n}}{(\log T)^{K}}+s_{1}^{n+1}s_{2}^{n}\]
for all $1\le j\le n+1$ and
\[\#\mathcal{N}_{i,j}\ll_{K}\frac{s_{1}^{n+2}s_{2}^{n}}{(\log T)^{K}}+s_{1}^{n+1}s_{2}^{n}\]
for all $1\le i<j\le n+1$.

First, we will bound $\#\mathcal{N}_{j}$. From the definition of $d_{j}$, we see that it is the gcd of the $j$th coordinate of $\bv_{1}$ and the $j$th coordinate of $\bv_{2}$ (which we'll call $v_{1,j}$ and $v_{2,j}$ respectively). Hence, $\tau(d_{j})$ is bounded above by $\tau(v_{1,j})$. It follows from \eqref{powerdivisorsum} with $t=1$ that the number of $|v_{1,j}|\le \beta s_{1}$ such that $\tau(v_{1,j})\ge (\log T)^{c\log 2}$ is at most $O(s_{1}/(\log T)^{c\log 2-1}+1)$ (the $+1$ here is coming from $v_{1,j}=0$). There are $O(s_{1})$ choices for each of the remaining $v_{1,i}$ and for each choice of $\bv_{1}$, there are at most $O(s_{1}s_{2}^{n})$ choices of $\bv_{2}$, so we get the desired bound on $\#\mathcal{N}_{j}$.

Now consider $\#\mathcal{N}_{i,j}$. By the definition of $d_{i,j}$ we see that it is equal to $v_{1,i}v_{2,j}-v_{1,j}v_{2,i}$. The number of $(\bv_{1},\bv_{2})$ where $v_{1,i}=0$ is at most $O(s_{1}^{n+1}s_{2}^{n})$. We have a similar bound for the cases $v_{1,j}=0$, $v_{2,i}=0$, and $v_{2,j}=0$. When $v_{1,j}\ne 0$, the number of $\bv_{1},\bv_{2}$ where $v_{1,i}v_{2,j}-v_{1,j}v_{2,i}=0$ is also at most $O(s_{1}^{n+1}s_{2}^{n})$, because choosing $v_{1,i},v_{1,j},v_{2,i}$ uniquely determines $v_{2,j}$. So in all that follows we will suppose that $v_{1,i}$, $v_{1,j}$, $v_{2,i}$, $v_{2,j}$, and $v_{1,i}v_{2,j}-v_{1,j}v_{2,i}$ are all nonzero. As $n+1\ge 3$, we may choose some $1\le k\le n+1$ distinct from both $i$ and $j$.

The fact that $\bv_{2}$ lies in a region that depends on $\bv_{1}$ causes us some problems, so we aim to remove this dependence by projecting. Let $\pi_{k}:\bR^{n+1}\rightarrow \bR^{n}$ be the map which omits the $k$th coordinate. Let $\bw_{1}=\pi_{k}(\bv_{1})$ and $\bw_{2}=\pi_{k}(\bv_{2})$. Let $\theta\in [0,\pi/2]$ be the angle between $\bv_{1}$ and the hyperplane $H_{k}$ consisting of points whose $k$th coordinate is $0$. Then $\|\bw_{1}\|\le \beta s_{1}$ and $\|\bw_{2}\| \le \beta s_{2}$. For each $\bw_{1}$ there are at most $O(s_{1})$ corresponding values of $\bv_{1}$ and if $\theta>0$ then for each $\bw_{2}$, we have that the corresponding number of $\bv_{2}$ is at most $O(s_{1}/\sin \theta)$. To see this, recall that the distance of $\bv_{2}$ to $V_{1}=\bv_{1}^{\perp}$ is at most $\|\bv_{1}\|$, so it lies in the region between two planes which are parallel to $V_{1}$ and are a distance $2\beta s_{1}$ apart. By some trigonometry, any line not parallel to $V_{1}$ intersects this region in a line segment of length $2\beta s_{1}/\sin \phi$ where $\phi$ is the angle between the line and $V_{1}$. For each $\bw_{2}$, we see that $\pi^{-1}_{k}(\bw_{2})$ is a line parallel to the $k$-axis of angle $\theta$ away from $V_{1}$.

The number of $\bv_{1}$ such that $\sin \theta\le (\log T)^{-c/3}$ is $O(s_{1}^{n+1}(\log T)^{-c/3}+s_{1}^{n})$, and for each $\bv_{1}$ there are at most $O(s_{1}s_{2}^{n})$ possible values of $\bv_{2}$. Hence, we may assume that $\sin \theta> (\log T)^{-c/3}$ and so for each $(\bw_{1},\bw_{2})$, the number of corresponding $(\bv_{1},\bv_{2})$ is at most $O(s_{1}^{2}(\log T)^{c/3})$.

Now we bound the number of $(\bw_{1},\bw_{2})$. Using \eqref{divisordetsum} from Lemma \ref{divisorsumlemma} with $U_{1}=U_{2}=\beta s_{1}$, $V_{1}=V_{2}=\beta s_{2}$, and the fact that  $\log s_{1}s_{2}\le 2\log T$, it follows that when $c\log 2>4$, the number of $(\bw_{1},\bw_{2})$ is at most $O(s_{1}^{n}s_{2}^{n}/(\log T)^{c\log 2-4})$. The desired bound on $\#\mathcal{N}_{i,j}$ then follows for $c(\log 2 -1/3)>K+4$.
\end{proof}
\end{lemma}

As in \cite{bbs2023}, for $\bx,\by\in\bZ^{n+1}$ linearly independent, we define $\mathfrak{d}_{r}(\bx)$ to be the minimum determinant of a rank $r$ sublattice of $\bZ^{n+1}$ containing $\bx$ and $\mathfrak{d}_{r}(\bx,\by)$ the minimum over sublattices containing both $\bx$ and $\by$. Note that we trivially have
\begin{equation}\label{fdtrivbound} \fd_{2}(\bx,\by) \le \det\left(\bZ\bx\oplus\bZ\by\right) \le \|\bx\| \cdot \|\by\|.\end{equation}
For positive real numbers $X$, $Y$, we define the following quantities, which are analogous to $\ell_{r,n}(X;\Delta)$ and $\ell_{r,n}(X,Y;\Delta)$ from \cite{bbs2023}:
\[\ell'_{r,n}(X;\Delta)=\#\left\{ \bx\in \mathcal{P}^{n+1}:\begin{array}{l}0<\Vert \bx\Vert \le X\\ \mathfrak{d}_{r}(\bx)\le \Delta\end{array}\right\},\]
\[\ell'_{r,n}(X,Y;\Delta)=\#\left\{ (\bx,\by)\in \mathcal{P}^{n+1}\times \mathcal{P}^{n+1}:\begin{array}{l}\dim (\Span_{\bR}(\{\bx,\by\}))=2\\ \Vert \bx\Vert \le X, \Vert \by\Vert \le Y\\ \mathfrak{d}_{r}(\bx,\by)\le \Delta\end{array}\right\}.\]
 The following is an analogue of \cite[Lemma 3.20]{bbs2023} in the rank $2$ case.


\begin{lemma}\label{lXYbound}
Suppose $n\ge 2$, $\eta>0$ and $3\le Y^{\eta}\le X\le Y$. Then
\[\ell'_{2,n}(X,Y;\Delta)\ll_{\eta} \frac{X^{2}Y^{2}}{(\log X)^{n+1}(\log Y)^{n+1}}\Delta^{n-1}(\log_{(3)} Y)^{2n+2}+X^{2}Y^{2}\Delta^{n-3/2}.\]
\begin{proof}
Given a lattice $L$, let $\pi(L,X,Y)$ be the number of $\bx,\by\in\mathcal{P}^{n+1}$ with $\Vert \bx\Vert \le X$, $\Vert \by\Vert\le Y$ which are linearly independent such that $\bx,\by\in L$. Recall Minkowski's second theorem \eqref{minkowski2}. We have
\[\ell'_{2,n}(X,Y;\Delta)\ll \sum_{s_{2}\le Y}\sum_{\substack{s_{1}\le \min\{s_{2},X\}\\ s_{1}s_{2}\ll \Delta}}\sum_{L\in S_{2,n}(s_{1},s_{2})}\pi(L,X,Y),\]
where in the sums, $s_{1}$ and $s_{2}$ run through dyadic ranges.

Let $\eps>0$ to be chosen later. When $s_{2}\le X^{1-\eps}$, and $L\in S_{2,n}(s_{1},s_{2})$, we can apply Lemma \ref{latticeprimecount} to get
\[\pi(L,X,Y)\le \pi(L,X)\pi(L,Y)\ll_{\eps} \frac{X^{2}Y^{2}}{(\log X)^{n+1}(\log Y)^{n+1}s_{1}^{2}s_{2}^{2}}\fc(L)^{2}.\]
It follows from \cite[Lemma 3.19]{bbs2023} that $\#S_{2,n}(s_{1},s_{2})\ll s_{1}^{n+2}s_{2}^{n}$, so we get
\begin{align*}\sum_{L\in S_{2,n}(s_{1},s_{2})}\pi(L,X)\pi(L,Y)&\ll_{\eps} \frac{X^{2}Y^{2}}{(\log X)^{n+1}(\log Y)^{n+1}}s_{1}^{n}s_{2}^{n-2}(\log_{(3)} X)^{2n+2}\\&+\sum_{L\in \widehat{S}_{2,n,c,Y}(s_{1},s_{2})}\#\left(\mathcal{B}_{n+1}(X)\cap L\right)\#\left(\mathcal{B}_{n+1}(Y)\cap L\right).\end{align*}
Then by Lemma \ref{badlatticecount} with $K=2n+2$, $T=Y$ and \cite[Lemma 3.6]{bbs2023} we have
\begin{align*}\sum_{L\in \widehat{S}_{2,n,c,Y}(s_{1},s_{2})}\#\left(\mathcal{B}_{n+1}(X)\cap L\right)\#\left(\mathcal{B}_{n+1}(Y)\cap L\right)\ll& \frac{X^{2}Y^{2}s_{1}^{n}s_{2}^{n-2}}{(\log X)^{n+1}(\log Y)^{n+1}}\\&+X^{2}Y^{2}s_{1}^{n-1}s_{2}^{n-2}.\end{align*}
It follows that:
\begin{align}\nonumber\sum_{L\in S_{2,n}(s_{1},s_{2})}\pi(L,X)\pi(L,Y)\ll_{\eps}& \frac{X^{2}Y^{2}}{(\log X)^{n+1}(\log Y)^{n+1}}s_{1}^{n}s_{2}^{n-2}(\log_{(3)} X)^{2n+2}\\ \label{lXYboundeq3}&+X^{2}Y^{2}s_{1}^{n-1}s_{2}^{n-2}.\end{align}
We apply this when $s_{1}<X^{\varepsilon}$ and $s_{2}\le X^{1-\varepsilon}$.

We now deal with the case $s_{1}\ge X^{\eps}\ge Y^{\eta\eps}$. By Lemma \ref{basiscoeffbound}, each $L\in S_{2,n}(s_{1},s_{2})$ can be written as $L=\bZ \bu\oplus \bZ \bv$ with $\bu,\bv$ linearly independent, $\| \bu\| \asymp s_{1}$, $\| \bv\| \asymp s_{2}$, and the distance from $\bv$ to $\bu^{\perp}$ at most $O(s_{1})$. Let $\mathcal{V}$ be the set of $\bu,\bv$ satisfying these conditions. Then, by \eqref{latticecoordinatebound}, for all $R>0$, each $\bz\in L$ with $\Vert \bz\Vert\le R$ may be written as $\bz=a\bu+b\bv$ with $|a|\ll R/s_{1}$, $|b|\ll R/s_{2}$. Hence:
\begin{equation}\label{lXYboundeq2}\sum_{L\in S_{2,n}(s_{1},s_{2})}\pi(L,X,Y)\le \sum_{\substack{|a_{1}|\ll X/s_{1}\\ |a_{2}|\ll Y/s_{1}}}\sum_{\substack{|b_{1}|\ll X/s_{2}\\ |b_{2}|\ll Y/s_{2}}} \sum_{\substack{\bu,\bv\in \mathcal{V}\\ \dim \Span(\{\bx,\by\})=2}} \prod_{i,j} \mathbf{1}_{\mathcal{P}}(a_{i}u_{j}+b_{i}v_{j}),\end{equation}
(where $\bx=a_{1}\bu+b_{1}\bv$, $\by=a_{2}\bu+b_{2}\bv$ and $\mathbf{1}_{\mathcal{P}}$ is the indicator function of the set of primes). In the sum, we may assume that $a_{1}b_{2}-a_{2}b_{1}\ne 0$ because otherwise $\dim \Span(\{\bx,\by\})\le 1$ for all $\bu,\bv$. We split this sum up according to cases.

Let $\mathcal{X}$ be the set of all $(a_{1},a_{2},b_{1},b_{2})\in \bZ^{4}$ such that $|a_{1}|\ll X/s_{1}$, $|a_{2}|\ll Y/s_{1}$, $|b_{1}|\ll X/s_{2}$, $|b_{2}|\ll Y/s_{2}$, and $a_{1}b_{2}-a_{2}b_{1}\ne 0$. Let
\[\mathcal{X}_{1}=\left\{(a_{1},a_{2},b_{1},b_{2})\in \mathcal{X}:\: \begin{array}{l l}b_{1}\ne 0,\: b_{2}\ne 0,\\ \max\{\tau(b_{1}),\tau(b_{2}),\tau(a_{1}b_{2}-a_{2}b_{1})\}\ll (\log Y)^{c}\end{array}\right\}\]
where $c$ is a sufficiently large constant to be chosen,
\[\mathcal{X}_{2}=\left\{(a_{1},a_{2},b_{1},b_{2})\in \mathcal{X}:\: \begin{array}{l l}b_{1}=0,\: \tau(b_{2})\ll (\log Y)^{c}\\ \text{or}\: b_{2}=0,\: \tau(b_{1})\ll (\log Y)^{c}\end{array}\right\},\]
\[\mathcal{X}_{3}=\left\{(a_{1},a_{2},b_{1},b_{2})\in \mathcal{X}:\: \begin{array}{l l}b_{1}=0,\: \tau(b_{2})\gg (\log Y)^{c}\\ \text{or}\: b_{2}=0,\: \tau(b_{1})\gg (\log Y)^{c}\end{array}\right\},\]
and
\[\mathcal{X}_{4}=\left\{(a_{1},a_{2},b_{1},b_{2})\in \mathcal{X}:\: \begin{array}{l l}b_{1}\ne 0,\: b_{2}\ne 0,\\ \max\{\tau(b_{1}),\tau(b_{2}),\tau(a_{1}b_{2}-a_{2}b_{1})\}\gg (\log Y)^{c} \end{array}\right\}.\]
Clearly, $\mathcal{X}=\bigcup_{i=1}^{4}\mathcal{X}_{i}$, so if we further define
\[\mathcal{Q}(a_{1},a_{2},b_{1},b_{2})=\sum_{\substack{\bu,\bv\in \mathcal{V}\\ \dim \Span(\{\bx,\by\})=2}} \prod_{i,j} \mathbf{1}_{\mathcal{P}}(a_{i}u_{j}+b_{i}v_{j}),\]
and, for $1\le i\le 4$,
\[\mathcal{Z}_{i}=\sum_{(a_{1},a_{2},b_{1},b_{2})\in \mathcal{X}_{i}} \mathcal{Q}(a_{1},a_{2},b_{1},b_{2}),\]
then \eqref{lXYboundeq2} gives
\begin{equation}\label{mathcalZsumbound}\sum_{L\in S_{2,n}(s_{1},s_{2})}\pi(L,X,Y)\le \sum_{i=1}^{4} \mathcal{Z}_{i}.\end{equation}

Let us first bound $\mathcal{Z}_{1}$. Let $(a_{1},a_{2},b_{1},b_{2})\in \mathcal{X}_{1}$, and consider $\mathcal{Q}(a_{1},a_{2},b_{1},b_{2})$. We fix some $\bu$ and sum over $\bv$. The set of $\bv$ which are within a distance $O(s_{1})$ of $\bu^{\perp}$ can be partitioned into at most $O((s_{2}/s_{1})^{n})$ balls of radius $O(s_{1})$. The contribution to the sum from each of these balls is of the form $\pi(Cs_{1},\dots,Cs_{1},\tilde{\bv},A,\tilde{\bu})$ for some $C>0$, where $\pi$ is as in Lemma \ref{matrixprimecount}, $\tilde{\bu}=(a_{1}u_{0},\dots,a_{1}u_{n},a_{2}u_{0},\dots,a_{2}u_{n})$, $\tilde{\bv}$ is some integer vector, and $A$ is the block matrix
\[\begin{pmatrix} b_{1}I_{n+1}\\ b_{2}I_{n+1}\end{pmatrix}.\]
Assume that $u_{j}\ne 0$ for all $j$. Then $\fc(A,\tilde{\bu})< \infty$, so by Lemma \ref{matrixprimecount}, the contribution to $\mathcal{Q}$ from each ball is at most
\[O\left( \frac{s_{1}^{n+1}}{(\log s_{1})^{2n+2}}\fc(A,\tilde{\bu})\right).\]
Since $\tau(b_{i})\ll (\log Y)^{c}$ for $i\in\{1,2\}$, we have $\omega(d_{j})\ll \log_{(2)} Y$ for all $1\le j\le 2n+2$. For all $1\le i<j\le 2n+2$, $d_{i,j}$ will divide one of $b_{1}^{2}$, $b_{2}^{2}$, $b_{1}b_{2}$, or $(a_{1}b_{2}-a_{2}b_{1})u_{k}$ for some $k$. Recall that $\tau(a_{1}b_{2}-a_{2}b_{1})\ll (\log Y)^{c}$, so if we also assume that $\tau(u_{j})\ll (\log Y)^{c}$ for all $1\le j\le n+1$ then we get $\omega(d_{i,j})\ll \log_{(2)} Y$ for all $1\le i<j\le 2n+2$. By a similar argument to the proof of \eqref{frakcsizeclaim}, this implies that $\fc(A,\tilde{\bu})\ll (\log_{(3)} Y)^{2n+2}$ (in \eqref{frakcsizeclaim}, we assumed that $T$ is sufficiently large, and $Y$ could be small, but this is not a problem because in the case where $X\le Y$ is small, $\ell'_{2,n}(X,Y)\ll 1$).

Now by the fact that there are at most $O((s_{2}/s_{1})^{n})$ of these balls, and at most $O(s_{1}^{n+1})$ choices of $\bu$, we get that
\begin{align} \label{lXYboundeq4}\nonumber\sum_{\substack{\bu,\bv\in \mathcal{V}\\ \dim \Span(\{\bx,\by\})=2\\ \tau(u_{j})\ll (\log Y)^{c}\text{for all $j$}}} \prod_{i,j} \mathbf{1}_{\mathcal{P}}(a_{i}u_{j}+b_{i}v_{j})&\ll \frac{s_{1}^{n+2}s_{2}^{n}}{(\log s_{1})^{2n+2}}(\log_{(3)} Y)^{2n+2}\\ &\ll_{\eps,\eta} \frac{s_{1}^{n+2}s_{2}^{n}}{(\log X)^{n+1}(\log Y)^{n+1}}(\log_{(3)} Y)^{2n+2},\end{align}
where we used the assumption $s_{1}\ge X^{\eps}$ for the second inequality.

Now we deal with those $\bu$ where $\tau(u_{j})\gg (\log Y)^{c}$ for some $j$ (recall that $\tau(0)=\infty$, so this includes $u_{j}=0$). The number of such $\bu$ is at most $O(s_{1}^{n}+s_{1}^{n+1}/(\log Y)^{c-1})$ by an application of \eqref{powerdivisorsum} with $t=1$, and for each such $\bu$ there are at most $O(s_{1}s_{2}^{n})$ possible values of $\bv$. Taking $c$ sufficiently large and combining this with \eqref{lXYboundeq4} gives:
\begin{equation}\label{lXYboundeq1}\mathcal{Q}(a_{1},a_{2},b_{1},b_{2})\ll_{\eps,\eta} \frac{s_{1}^{n+2}s_{2}^{n}(\log_{(3)} Y)^{2n+2}}{(\log X)^{n+1}(\log Y)^{n+1}}+s_{1}^{n+1}s_{2}^{n},\end{equation}
for all $(a_{1},a_{2},b_{1},b_{2})\in \mathcal{X}_{1}$. The assumption that $s_{1}\ge X^{\eps}$ then means that we can ignore the $s_{1}^{n+1}s_{2}^{n}$ term. Hence
\begin{equation}\label{mathcalZ1bound}
\mathcal{Z}_{1} \ll_{\eps,\eta} \frac{X^{2}Y^{2}s_{1}^{n}s_{2}^{n-2}(\log_{(3)} Y)^{2n+2}}{(\log X)^{n+1}(\log Y)^{n+1}}.
\end{equation}

We now bound $\mathcal{Z}_{2}$. Let $(a_{1},a_{2},b_{1},b_{2})\in \mathcal{X}_{2}$ and suppose that $b_{1}=0$ and $\tau(b_{2})\ll (\log Y)^{c}$. We have that $a_{1}u_{j}$ is prime if and only if $a_{1}=1$ and $u_{j}$ is prime or $a_{1}$ is prime and $u_{j}=1$. In any case, there are at most $O(s_{1}/\log s_{1})$ possible choices for each $u_{j}$, which overall saves a factor of $(\log s_{1})^{n+1} \asymp_{\eps} (\log X)^{n+1}$ over the trivial bound. For each such $\bu$, we divide the set of $\bv$ up into balls of radius $O(s_{1})$ as before and apply Lemma \ref{matrixprimecount} on each ball, but this time with $A=b_{2}I_{n+1}$ and $\bb=a_{2}\bu$. This saves an additional factor of $(\log Y)^{n+1}/(\log_{(3)} Y)^{n+1}$ and we get \eqref{lXYboundeq1} again. The case $b_{2}=0$, $\tau(b_{1})\ll (\log Y)^{c}$ is similar. Hence
\begin{equation}\label{mathcalZ2bound}
\mathcal{Z}_{2} \ll_{\eps,\eta} \frac{X^{2}Y^{2}s_{1}^{n}s_{2}^{n-2}(\log_{(3)} Y)^{2n+2}}{(\log X)^{n+1}(\log Y)^{n+1}}.
\end{equation}

Recall that, for $(a_{1},a_{2},b_{1},b_{2})\in \mathcal{X}$, we have $a_{1}b_{2}-a_{2}b_{1}\ne 0$ so in particular, $b_{1}$ and $b_{2}$ can't both be $0$. Let $(a_{1},a_{2},b_{1},b_{2})\in \mathcal{X}_{3}$ and suppose $b_{1}=0$ and $\tau(b_{2})\gg (\log Y)^{c}$. We must have $b_{2} \ne 0$, so the number of $b_{2}\ll Y/s_{2}$ such that $b_{2}\ne 0$ and $\tau(b_{2})\gg (\log Y)^{c}$ is at most $O(Y/(s_{2}(\log Y)^{c-1}))$, so by choosing $c$ large enough and recalling that there are $O(XY/s_{1}^{2})$ choices for $a_{1}$ and $a_{2}$, we see that the number of such $(a_{1},a_{2},b_{1},b_{2})$ is at most
\[O\left(\frac{XY^{2}}{s_{1}^{2}s_{2}(\log X)^{n+1}(\log Y)^{n+1}}\right).\]
By a similar argument, the number of $(a_{1},a_{2},b_{1},b_{2})\in \mathcal{X}_{3}$ with $b_{2}=0$ and $\tau(b_{1})\gg (\log Y)^{c}$, satisfies the same bound but with $X^{2}Y$ in place of $XY^{2}$, which is smaller because $X\le Y$. Hence, using the trivial bound $\mathcal{Q}(a_{1},a_{2},b_{1},b_{2})\le \#\mathcal{V}\ll s_{1}^{n+2}s_{2}^{n}$, we see that
\begin{equation}\label{mathcalZ3bound}\mathcal{Z}_{3}\ll \frac{XY^{2}s_{1}^{n}s_{2}^{n-1}}{(\log X)^{n+1}(\log Y)^{n+1}}.\end{equation}

It just remains to bound $\mathcal{Z}_{4}$. Let $(a_{1},a_{2},b_{1},b_{2})\in \mathcal{X}_{4}$. Then $b_{1}$, $b_{2}$ and $a_{1}b_{2}-a_{2}b_{1}$ are all nonzero, but one of $\tau(b_{1})\gg (\log Y)^{c}$, $\tau(b_{2})\gg (\log Y)^{c}$ or $\tau(a_{1}b_{2}-a_{2}b_{1})\gg (\log Y)^{c}$ holds. In the first two cases, it is easy to see that the number of such $a_{1}$, $b_{1}$, $a_{2}$, $b_{2}$ is at most
\begin{equation}\label{lXYboundeq5}O\left(\frac{X^{2}Y^{2}}{s_{1}^{2}s_{2}^{2}(\log X)^{n+1}(\log Y)^{n+1}}\right),\end{equation}
when $c$ is sufficiently large. When $\tau(a_{1}b_{2}-a_{2}b_{1})\gg (\log Y)^{c}$ and $a_{1},a_{2}\ne 0$ then we also get \eqref{lXYboundeq5} by using \eqref{divisordetsum} with $U_{i}=X/s_{i}$, $V_{i}=Y/s_{i}$, and $Z=XY/(s_{1}s_{2})$. If $a_{1}=0$ then $a_{2}\ne 0$ and $\max\{\tau(a_{2}),\tau(b_{1})\}\gg (\log Y)^{c/2}$ and we also deduce the bound \eqref{lXYboundeq5}. The case $a_{2}=0$ is similar. Hence, by the trivial bound $\mathcal{Q}(a_{1},a_{2},b_{1},b_{2})\ll s_{1}^{n+2}s_{2}^{n}$, we conclude that
\begin{equation}\label{mathcalZ4bound} \mathcal{Z}_{4}\ll \frac{X^{2}Y^{2}s_{1}^{n}s_{2}^{n-2}}{(\log X)^{n+1}(\log Y)^{n+1}}.
\end{equation}

Inserting \eqref{mathcalZ1bound}, \eqref{mathcalZ2bound}, \eqref{mathcalZ3bound}, and \eqref{mathcalZ4bound} into \eqref{mathcalZsumbound} we find that, for $s_{1}\ge X^{\eps}$,
\begin{align}\label{lXYboundeq6}\sum_{L\in S_{2,n}(s_{1},s_{2})}\pi(L,X,Y)\ll_{\eps,\eta}\left(X^{2}Y^{2}s_{1}^{n}s_{2}^{n-2}+XY^{2}s_{1}^{n}s_{2}^{n-1}\right)\frac{(\log_{(3)} Y)^{2n+2}}{(\log X)^{n+1}(\log Y)^{n+1}}.\end{align}

\bigskip Now we bound $\ell'_{2,n}(X,Y;\Delta)$. Using \eqref{lXYboundeq6} for the terms with $s_{1}\ge X^{\eps}$ and using \eqref{lXYboundeq3} for the terms with $s_{1}<X^{\eps}$ and $s_{2}\le X^{1-\eps}$, we have
\begin{align*}\ell'_{2,n}(&X,Y;\Delta)\ll_{\eps,\eta}\\ &\sum_{s_{2}\le Y}\sum_{\substack{s_{1}\le \min\{s_{2},X\}\\s_{1}s_{2}\ll \Delta}} \left(X^{2}Y^{2}s_{1}^{n}s_{2}^{n-2}+XY^{2}s_{1}^{n}s_{2}^{n-1}\right)\frac{(\log_{(3)} Y)^{2n+2}}{(\log X)^{n+1}(\log Y)^{n+1}}\\&+\sum_{\substack{s_{1}s_{2}\ll \Delta\\s_{1}\ll \min\{s_{2},X^{\eps},\Delta^{1/2}\}}}X^{2}Y^{2}s_{1}^{n-1}s_{2}^{n-2}\\&+\sum_{X^{1-\eps}<s_{2}\le Y}\sum_{\substack{s_{1}<X^{\eps}\\s_{1}s_{2}\ll \Delta}}\sum_{L\in S_{2,n}(s_{1},s_{2})}\pi(L,X)\pi(L,Y).\end{align*}
By the same argument as the last part of the proof of \cite[Lemma 3.20]{bbs2023}, the first term is bounded by
\[ \frac{X^{2}Y^{2}\Delta^{n-1}(\log_{(3)} Y)^{2n+2}}{(\log X)^{n+1}(\log Y)^{n+1}}.\]
For the second term we have
\begin{align*}\sum_{\substack{s_{1}s_{2}\ll \Delta\\s_{1}\ll \min\{s_{2},X^{\eps},\Delta^{1/2}\}}}s_{1}^{n-1}s_{2}^{n-2} &\ll \sum_{\substack{s_{2}\le \Delta^{1/2}\\s_{1}\le s_{2}}} s_{1}^{n-1}s_{2}^{n-2}+\sum_{\substack{s_{1} \ll \Delta/s_{2}\\ s_{2}>\Delta^{1/2}}} s_{1}^{n-1}s_{2}^{n-2}\\ &\ll \sum_{s_{2} \le \Delta^{1/2}} s_{2}^{2n-3} + \Delta^{n-1}\sum_{s_{2}> \Delta^{1/2}} \frac{1}{s_{2}}\\ &\ll \Delta^{n-3/2}.\end{align*}
For the third term, by using \cite[Lemma 3.6]{bbs2023}, we have the bound
\[\pi(L,X)\pi(L,Y)\le \#(L\cap \mathcal{B}_{n+1}(X))\#(L\cap \mathcal{B}_{n+1}(Y))\ll \left( \frac{X^{2}}{s_{1}s_{2}}+\frac{X}{s_{1}}\right)\frac{Y^{2}}{s_{1}s_{2}}.\]
Combining this with the fact that $s_{1}/s_{2}< X^{2\eps-1}$ and \cite[Lemma 3.19]{bbs2023}, we get
\begin{align*}\sum_{X^{1-\eps}<s_{2}\le Y}\sum_{\substack{s_{1}\le X^{\eps}\\s_{1}s_{2}\ll \Delta}}&\sum_{L\in S_{2,n}(s_{1},s_{2})}\pi(L,X)\pi(L,Y)\\ &\ll \sum_{X^{1-\eps}<s_{2}\le Y}\sum_{\substack{s_{1}\le X^{\eps}\\s_{1}s_{2}\ll \Delta}} \left( X^{2}Y^{2}s_{1}^{n}s_{2}^{n-2}+XY^{2}s_{1}^{n}s_{2}^{n-1}\right)\\ &\ll X^{2\eps +1}Y^{2}\sum_{s_{1}s_{2}\ll \Delta}s_{1}^{n-1}s_{2}^{n-1}+XY^{2}\sum_{s_{1}\le X^{\eps}}\sum_{s_{2}\ll \Delta/s_{1}}s_{1}^{n}s_{2}^{n-1}\\ &\ll X^{2\eps+1}Y^{2}\Delta^{n-1}(\log \Delta)^{2}+X^{1+\eps}Y^{2}\Delta^{n-1}.\end{align*}
Recalling \eqref{fdtrivbound} we see that without loss of generality, $\Delta\le XY$ which implies that $\log \Delta\ll_{\eta} \log X$. Taking $\eps<1/2$ gives the desired bound.
\end{proof}
\end{lemma}

Now define
\[\Omega'_{d,n}(B)=\left\{ (\bx,\by)\in \mathcal{P}^{n+1}\times \mathcal{P}^{n+1} : \begin{array}{l} \Vert \bx\Vert, \Vert \by\Vert\le B^{1/(n+1-d)}\\\bx\ne \by\\\bx,\by\notin \Span\{(1,\dots,1)\}\end{array}\right\},\]
\[E'_{d,n}(B)=(\log B)^{2n+2}\sum_{(\bx,\by)\in \Omega'_{d,n}(B)}\frac{1}{\det\left( \Lambda_{\nu_{d,n}(\bx)}\cap \Lambda_{\nu_{d,n}(\by)}\right)}.\]
It can be easily shown that if $(\bx,\by)\in\Omega'_{d,n}(B)$ then $\bx$ and $\by$ must be linearly independent. This $E'_{d,n}$ is analogous to the function $E_{d,n}$ from \cite{bbs2023} and we wish to have bounds that replace those in \cite[Lemma 4.5]{bbs2023}.
\begin{lemma}\label{Esandwich}
Let $d\ge 2$ and $n\ge d$ with $(d,n)\ne (2,2)$. Then
\[B^{2}\ll E'_{d,n}(B)\ll B^{2}(\log_{(3)} B)^{2n+2}.\]
\begin{proof}
We follow the same proof as in \cite[Lemma 4.5]{bbs2023} but using Lemma \ref{lXYbound} instead of \cite[Lemma 3.21]{bbs2023}. By the upper bound in \cite[Lemma 4.4]{bbs2023},
\[E'_{d,n}(B)\gg (\log B)^{2n+2}\sum_{(\bx,\by)\in \Omega'_{d,n}(B)}\frac{1}{\| \bx\|^{d}\| \by\|^{d}},\]
which gives the lower bound by considering that, by the prime number theorem, the number of $(\bx,\by)\in \Omega'_{d,n}(B)$ with $\|\bx\|,\|\by\|> \frac{1}{2}B^{1/(n+1-d)}$ has size $\gg B^{2(n+1)/(n+1-d)}/(\log B)^{2n+2}$. By using \cite[Lemma 4.4]{bbs2023} and breaking up $\| \bx\|$, $\| \by\|$ and $\mathfrak{d}_{2}(\bx,\by)$ into dyadic intervals, we get
\[E'_{d,n}(B)\ll (\log B)^{2n+2}\sum_{X\le Y\ll B^{1/(n+1-d)}}\sum_{\Delta_{2}\ll XY}\frac{1}{\Delta_{2}(XY)^{d-1}}\ell'_{2,n}(X,Y;\Delta_{2}).\]
Here, and elsewhere unless stated otherwise, sums over $X$, $Y$ and $\Delta_{2}$ are assumed to be over dyadic ranges and the upper bound for $\Delta_{2}$ comes from \eqref{fdtrivbound}.

By using Lemma \ref{lXYbound} when $X\ge Y^{\eta}$ and \cite[Lemma 3.21]{bbs2023} otherwise we have,
\begin{align*}\frac{E'_{d,n}(B)}{(\log B)^{2n+2}}\ll_{\eta}& \sum_{Y^{\eta}\le X\le Y\ll B^{1/(n+1-d)}}\frac{(\log_{(3)} Y)^{2n+2}}{(XY)^{d-3}(\log X)^{n+1}(\log Y)^{n+1}}\sum_{\Delta_{2}\ll XY} \Delta_{2}^{n-2}\\&+\sum_{\substack{X<Y^{\eta}\\ Y\ll B^{1/(n+1-d)}}}\frac{1}{(XY)^{d-3}}\sum_{\Delta_{2}\ll XY}\Delta_{2}^{n-2}\\&+\sum_{X\le Y\ll B^{1/(n+1-d)}}\frac{1}{(XY)^{d-3}}\sum_{\Delta_{2}\ll XY}\Delta_{2}^{n-5/2}\\ \ll& \frac{B^{2}(\log_{(3)} B)^{2n+2}}{(\log B)^{2n+2}}+B^{1+\eta}+B^{2-1/(n+1-d)}.\end{align*}
In the last line we have used the fact that $n\ge 3$, and to deal with the factors of $(\log X)^{n+1}$ and $(\log Y)^{n+1}$ in the first sum, we restrict the sum to $Y \ge B^{\eps}$, say, for some fixed $\eps>0$, so that $\log X \asymp \log Y \asymp \log B$, and then note that the remaining parts of the sum are negligible if $\eps$ is chosen small enough. Now take for example $\eta=1/2$.
\end{proof}
\end{lemma}

Recall the definition \eqref{Wdef} of $W$ and $w$.
\begin{lemma}\label{Wbound}
We have
\[W\ll (\log B)^{4/\log_{(3)} B}.\]
\begin{proof}
By the prime number theorem:
\[\log W=\sum_{p\le w} \log p \left(\left\lceil \frac{\log w}{\log p}\right\rceil+1\right)\le \pi(w)\log w+2\sum_{p\le w}\log p\le 4w,\]
when $B$ (and hence $w$) is sufficiently large. The result follows by taking $\exp$ of both sides.
\end{proof}
\end{lemma}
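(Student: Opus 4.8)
The plan is a direct computation with logarithms, of the standard type used to control the ``$W$-trick'' modulus. First I would take logarithms, so that
\[
\log W = \sum_{p\le w}\log p\left(\left\lceil\frac{\log w}{\log p}\right\rceil+1\right).
\]
Using the crude bound $\lceil \log w/\log p\rceil \le \log w/\log p + 1$, valid for every prime $p\le w$, the right-hand side is at most
\[
\sum_{p\le w}\bigl(\log w + 2\log p\bigr) = \pi(w)\log w + 2\sum_{p\le w}\log p .
\]
By the prime number theorem (Chebyshev's estimates would already suffice) both $\pi(w)\log w$ and $\sum_{p\le w}\log p$ are $w + o(w)$, so $\log W \le 3w + o(w)$.

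The second step is to unwind the definition $w = \log_{(2)}B/\log_{(3)}B$. For $B$ large enough that $\log_{(2)}B = \log\log B$ and $\log_{(3)}B = \log\log\log B$ genuinely hold (that is, past the range where the $\max\{1,\cdot\}$ in the definition of $\log_{(k)}$ is active), the factor $3 + o(1)$ is at most $4$, whence
\[
\log W \le \frac{4\log_{(2)}B}{\log_{(3)}B} = \frac{4}{\log_{(3)}B}\,\log(\log B).
\]
Exponentiating, and using $\exp(\log\log B) = \log B$, gives $W\le(\log B)^{4/\log_{(3)}B}$, which is the claim; for the finitely many small $B$ the implied constant absorbs everything.

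There is no real obstacle here. The only two points deserving a word of care are (a) that the ceiling contributes only a bounded multiplicative amount once summed, which is exactly what the ``$+1$'' in the exponent is there to handle, and (b) that one should pass to $B$ sufficiently large before replacing $\log_{(k)}B$ by the genuine $k$-fold iterated logarithm, which is harmless since the conclusion is an $\ll$-estimate. If desired, the constant $4$ could be replaced by any constant strictly greater than $3$, at the cost only of requiring $B$ larger.
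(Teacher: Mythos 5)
Your proof is correct and follows essentially the same route as the paper: bound the ceiling by $\log w/\log p+1$, apply the prime number theorem (or Chebyshev) to get $\log W\le 3w+o(w)$, and exponentiate using $w=\log_{(2)}B/\log_{(3)}B$. The extra care you take in unwinding the definition of $\log_{(k)}$ for large $B$ is a harmless elaboration of what the paper leaves implicit.
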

Now define
\[F'_{d,n}(B)=(\log B)^{2n+2}\sum_{(\bx,\by)\in\Omega'_{d,n}(B)}\frac{\mathcal{E}_{\bx,\by}(B)}{\det \left(\Lambda_{\nu_{d,n}(\bx)}\cap\Lambda_{\nu_{d,n}(\by)}\right)},\]
where $\mathcal{E}_{\bx,\by}$ is defined as in \cite[(4.10)]{bbs2023}, by
\[\mathcal{E}_{\bx,\by}(B)=\min\left\{1,\frac{\Delta(\bx,\by)^{2}}{\alpha^{2}}\right\}+\mathbf{1}_{\mathcal{G}(\bx,\by)\,\nmid\, \frac{W}{\rad(W)}}.\]
Here,
\[\Delta(\bx,\by)=\frac{\Vert \nu_{d,n}(\bx)\Vert\cdot\Vert \nu_{d,n}(\by)\Vert}{\det(\bZ\nu_{d,n}(\bx)\oplus\bZ\nu_{d,n}(\by))},\]
$\mathcal{G}(\bc_{1},\dots,\bc_{k})$ is the greatest common divisor of the $k\times k$ minors of the matrix with columns $\bc_{1},\dots,\bc_{k}$, and for $n\in \bZ^{+}$, we define $\rad(n) = \prod_{p \mid n} p$. We also recall that $\alpha=\log B$. The following is an analogue of \cite[Lemma 4.6]{bbs2023}.
\begin{lemma}\label{Fbound}
Let $d\ge 2$ and $n\ge d$ with $(d,n)\ne (2,2)$. Then for all $\eps>0$,
\[F'_{d,n}(B)\ll_{\eps} \frac{B^{2}}{(\log_{(2)} B)^{n-2-\eps}}.\]
\begin{proof}
Let
\[F_{d,n}^{(1)}(B)=\sum_{(\bx,\by)\in\Omega'_{d,n}(B)}\frac{1}{\det\left(\Lambda_{\nu_{d,n}(\bx)}\cap\Lambda_{\nu_{d,n}(\by)}\right)}\min\left\{1,\frac{\Delta(\bx,\by)^{2}}{\alpha^{2}}\right\},\]
and
\[F_{d,n}^{(2)}(B)=\sum_{(\bx,\by)\in\Omega'_{d,n}(B)}\frac{1}{\det\left(\Lambda_{\nu_{d,n}(\bx)}\cap\Lambda_{\nu_{d,n}(\by)}\right)}\mathbf{1}_{\mathcal{G}(\bx,\by)\,\nmid\, \frac{W}{\rad(W)}},\]
so that
\[F'_{d,n}(B)=(\log B)^{2n+2}\left(F_{d,n}^{(1)}(B)+F_{d,n}^{(2)}(B)\right).\]
It follows from \cite[Lemma 4.4 and equation (4.4)]{bbs2023} that
\[\Delta(\bx,\by)\ll \frac{\Vert \bx\Vert\cdot\Vert \by\Vert}{\fd_{2}(\bx,\by)},\]
and
\begin{multline*}F_{d,n}^{(1)}(B)\ll\\ \sum_{X\le Y\ll B^{1/(n+1-d)}}\sum_{\Delta_{2}\ll XY}\frac{1}{\Delta_{2}(XY)^{d-1}}\left(\min\left\{1,\frac{(XY)^{2}}{\Delta_{2}^{2}\alpha^{2}}\right\}\right)\ell'_{2,n}(X,Y;\Delta_{2}).\end{multline*}
We let $\eta>0$ to be chosen later. Now we use the fact that
\[\min\left\{1,\frac{(XY)^{2}}{\Delta_{2}^{2}\alpha^{2}}\right\}\le \frac{(XY)^{1/2}}{\Delta_{2}^{1/2}\alpha^{1/2}},\]
together with Lemma \ref{lXYbound} when $X\ge Y^{\eta}$ and \cite[Lemma 3.21]{bbs2023} when $X<Y^{\eta}$, to get:
\begin{align*}F_{d,n}^{(1)}(B)&\ll_{\eta} \frac{1}{\alpha^{1/2}}\sum_{Y^{\eta}\le X\le Y\ll B^{1/(n+1-d)}}\frac{(\log_{(3)} Y)^{2n+2}}{(\log X)^{n+1}(\log Y)^{n+1}(XY)^{d-7/2}}\sum_{\Delta_{2}\ll XY}\Delta_{2}^{n-5/2}\\
&+\frac{1}{\alpha^{1/2}}\sum_{X\le Y\ll B^{1/(n+1-d)}}\frac{1}{(XY)^{d-7/2}}\sum_{\Delta_{2}\ll XY}\Delta_{2}^{n-3}\\
&+\frac{1}{\alpha^{1/2}}\sum_{\substack{X<Y^{\eta}\\Y\ll B^{1/(n+1-d)}}}\frac{1}{(XY)^{d-7/2}}\sum_{\Delta_{2}\ll XY}\Delta_{2}^{n-5/2}\\
&\ll\frac{B^{2}(\log_{(3)} B)^{2n+2}}{(\log B)^{2n+5/2}}+\frac{1}{\alpha^{1/2}}\sum_{X\le Y\ll B^{1/(n+1-d)}}(XY)^{n+1/2-d}\log XY\\
&+\frac{1}{\alpha^{1/2}}\sum_{\substack{X<Y^{\eta}\\Y\ll B^{1/(n+1-d)}}} (XY)^{n+1-d}\\
&\ll\frac{B^{2}(\log_{(3)} B)^{2n+2}}{(\log B)^{2n+5/2}}+B^{2-1/(n+1-d)}(\log B)^{1/2}+\frac{B^{1+\eta}}{(\log B)^{1/2}}.
\end{align*}
We can then take $\eta=1/2$ for example.

Now we will bound $F_{d,n}^{(2)}$. In the proof of \cite[Lemma 4.6]{bbs2023}, it is shown that if $\mathcal{G}(\bx,\by)\nmid \frac{W}{\rad(W)}$ then
\[\fd_{2}(\bx,\by)\le \frac{\Vert \bx\Vert \cdot \Vert \by\Vert}{w},\]
and this still holds in our case, where we have defined $w$ to be smaller than in \cite{bbs2023}.
Hence, by \cite[Lemma 4.4]{bbs2023},
\[F_{d,n}^{(2)}(B)\ll \sum_{X\le Y\ll B^{1/(n+1-d)}}\sum_{\Delta_{2}\ll XY/w}\frac{1}{\Delta_{2}(XY)^{d-1}}\ell'_{2,n}(X,Y;\Delta_{2}),\]
and so, for $\eta>0$,
\begin{align*}F_{d,n}^{(2)}(B)\ll_{\eta}& \sum_{Y^{\eta}\le X\le Y\ll B^{1/(n+1-d)}}\frac{(\log_{(3)} Y)^{2n+2}}{(\log X)^{n+1}(\log Y)^{n+1}(XY)^{d-3}}\sum_{\Delta_{2}\ll XY/w}\Delta_{2}^{n-2}\\
&+\sum_{X\le Y\ll B^{1/(n+1-d)}}\frac{1}{(XY)^{d-3}}\sum_{\Delta_{2}\ll XY/w}\Delta_{2}^{n-5/2}\\
&+\sum_{\substack{X<Y^{\eta}\\Y\ll B^{1/(n+1-d)}}}\frac{1}{(XY)^{d-3}}\sum_{\Delta_{2}\ll XY/w}\Delta_{2}^{n-2}\\
\ll &\frac{B^{2}(\log_{(3)} B)^{2n+2}}{w^{n-2}(\log B)^{2n+2}}+\frac{B^{2-1/(n+1-d)}}{w^{n-5/2}}+\frac{B^{1+\eta}}{w^{n-2}}.
\end{align*}
Now choose $\eta=1/2$ and note that $w\gg (\log_{(2)} B)^{1-\eps}$ to complete the proof.
\end{proof}
\end{lemma}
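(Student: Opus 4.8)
The plan is to split $F'_{d,n}(B)$ according to the two terms of $\mathcal{E}_{\bx,\by}(B)$: write $F'_{d,n}(B)=(\log B)^{2n+2}\bigl(F^{(1)}_{d,n}(B)+F^{(2)}_{d,n}(B)\bigr)$, where $F^{(1)}_{d,n}$ is the sum over $(\bx,\by)\in\Omega'_{d,n}(B)$ of $\det(\Lambda_{\nu_{d,n}(\bx)}\cap\Lambda_{\nu_{d,n}(\by)})^{-1}\min\{1,\Delta(\bx,\by)^{2}/\alpha^{2}\}$ and $F^{(2)}_{d,n}$ is the analogous sum with $\mathbf{1}_{\mathcal{G}(\bx,\by)\nmid W/\rad(W)}$ in place of the Archimedean cut-off. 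Both pieces will be controlled by the same mechanism: use \cite[Lemma 4.4]{bbs2023} to replace $\det(\Lambda_{\nu_{d,n}(\bx)}\cap\Lambda_{\nu_{d,n}(\by)})$ from below by $\fd_2(\bx,\by)(\Vert\bx\Vert\,\Vert\by\Vert)^{d-1}$ and to bound $\Delta(\bx,\by)\ll\Vert\bx\Vert\,\Vert\by\Vert/\fd_2(\bx,\by)$; then dyadically decompose $X=\Vert\bx\Vert$, $Y=\Vert\by\Vert$ and $\Delta_2=\fd_2(\bx,\by)$, and bound the number of admissible pairs in each dyadic block by $\ell'_{2,n}(X,Y;\Delta_2)$. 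On the ``balanced'' blocks $X\ge Y^{\eta}$ I would invoke the prime-pair estimate Lemma \ref{lXYbound}; on the ``unbalanced'' blocks $X<Y^{\eta}$, where that estimate is unavailable, I would fall back on the rational-point lattice bound \cite[Lemma 3.21]{bbs2023}. The free parameter $\eta>0$ is fixed at the very end (e.g.\ $\eta=1/2$).

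For $F^{(1)}_{d,n}$ I would use the elementary inequality $\min\{1,(XY)^{2}/(\Delta_2^{2}\alpha^{2})\}\le (XY)^{1/2}/(\Delta_2^{1/2}\alpha^{1/2})$, which extracts a genuine factor $\alpha^{-1/2}=(\log B)^{-1/2}$ and lowers the exponent of $\Delta_2$ by $1/2$. Summing the resulting geometric series in $\Delta_2$ (here $n\ge 3$ guarantees the exponents stay positive, so each dyadic sum is governed by its top term $\Delta_2\asymp XY$) and then in $X,Y\ll B^{1/(n+1-d)}$, the main contribution comes out of size $B^{2}(\log_{(3)}B)^{O(1)}\alpha^{-1/2}(\log B)^{-(2n+2)}$, alongside negligible terms of sizes $B^{2-1/(n+1-d)}(\log B)^{1/2}$ and $B^{1+\eta}(\log B)^{-1/2}$. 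Multiplying back by $(\log B)^{2n+2}$, the $F^{(1)}$-contribution is $\ll B^{2}(\log_{(3)}B)^{O(1)}/(\log B)^{1/2}$, far inside the claimed bound; so $F^{(1)}$ is the easy term.

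The real content is $F^{(2)}_{d,n}$, and here I would import the observation from the proof of \cite[Lemma 4.6]{bbs2023} that $\mathcal{G}(\bx,\by)\nmid W/\rad(W)$ forces $\fd_2(\bx,\by)\le\Vert\bx\Vert\,\Vert\by\Vert/w$ — this argument is unaffected by our smaller choice of $w$. Thus in $F^{(2)}_{d,n}$ the dyadic sum over $\Delta_2$ is truncated at $\Delta_2\ll XY/w$, which, after summing the geometric series (again using $n\ge 3$), buys a factor $w^{-(n-2)}$. Applying Lemma \ref{lXYbound} on the balanced blocks and \cite[Lemma 3.21]{bbs2023} on the unbalanced ones and summing over $X,Y\ll B^{1/(n+1-d)}$, the dominant contribution is $B^{2}(\log_{(3)}B)^{2n+2}w^{-(n-2)}(\log B)^{-(2n+2)}$, with lower-order terms $B^{2-1/(n+1-d)}w^{-(n-2)}$ and $B^{1+\eta}w^{-(n-2)}$. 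Multiplying by $(\log B)^{2n+2}$ and inserting $w=\log_{(2)}B/\log_{(3)}B$, the main term becomes $B^{2}(\log_{(3)}B)^{3n}/(\log_{(2)}B)^{n-2}$; since $(\log_{(3)}B)^{3n}\ll_{\eps}(\log_{(2)}B)^{\eps}$ this is $\ll B^{2}/(\log_{(2)}B)^{n-2-\eps}$, as required.

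I expect the main obstacle to be the unbalanced regime $X<Y^{\eta}$: there one has no prime-point count stronger than the rational-point bound \cite[Lemma 3.21]{bbs2023}, which leaves a term of shape $B^{1+\eta}$ (times harmless log factors), and one must check that $\eta$ can be chosen small enough — independently of all the other parameters — that this stays well below $B^{2}$ without disturbing the main terms. The secondary concern is bookkeeping: confirming that the $\log_{(3)}$-losses propagated through Lemma \ref{lXYbound} (which themselves descend from the bad-lattice estimate Lemma \ref{badlatticecount}) are genuinely swallowed by the $w^{-(n-2)}$ gain, so that the final exponent of $\log_{(2)}B$ is $n-2-\eps$ and not worse. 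Both points are routine once the structure above is in place.
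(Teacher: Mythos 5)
Your proposal is correct and follows essentially the same route as the paper: the same split into $F^{(1)}_{d,n}$ and $F^{(2)}_{d,n}$, the same use of \cite[Lemma 4.4]{bbs2023} and dyadic decomposition with Lemma \ref{lXYbound} on balanced blocks and \cite[Lemma 3.21]{bbs2023} on unbalanced ones, the same elementary bound $\min\{1,(XY)^{2}/(\Delta_2^{2}\alpha^{2})\}\le (XY)^{1/2}/(\Delta_2^{1/2}\alpha^{1/2})$ for $F^{(1)}$, and the same truncation $\Delta_2\ll XY/w$ from \cite[Lemma 4.6]{bbs2023} for $F^{(2)}$, concluding with $\eta=1/2$ and $w\gg(\log_{(2)}B)^{1-\eps}$. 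No gaps.
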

Now we define
\[\hat{D}_{d,n}(A,B)=\sum_{V\in\bV_{d,n}(A)}N'_{V}(B)^{2}-(\log B)^{n+1}\sum_{V\in\bV_{d,n}(A)}N'_{V}(B),\]
\[\hat{D}_{d,n}^{\mathrm{mix}}(A,B)=\sum_{V\in\bV_{d,n}(A)}N'_{V}(B)N^{\mathrm{ploc}}_{V}(B)-(\log B)^{n+1}\sum_{V\in\bV_{d,n}(A)}\hat{\Delta}_{V}^{\mathrm{mix}}(B),\]
and
\[\hat{D}_{d,n}^{\mathrm{loc}}(A,B)=\sum_{V\in\bV_{d,n}(A)}N^{\mathrm{ploc}}_{V}(B)^{2}-(\log B)^{n+1}\sum_{V\in\bV_{d,n}(A)}\hat{\Delta}_{V}^{\mathrm{loc}}(B),\]
where:
\begin{equation}\label{Deltamixdef}\hat{\Delta}_{V}^{\mathrm{mix}}(B)=(\log B)^{n+1}\frac{\alpha W}{\Vert \ba_{V}\Vert}\sum_{\substack{\bx\in\Xi'_{d,n}(B)\\ \ba_{V}\in\Lambda_{\nu_{d,n}(\bx)}}}\frac{1}{\Vert\nu_{d,n}(\bx)\Vert}\end{equation}
and
\begin{equation}\label{Deltalocdef}\hat{\Delta}_{V}^{\mathrm{loc}}(B)=(\log B)^{n+1}\frac{\alpha^{2} W^{2}}{\Vert \ba_{V}\Vert^{2}}\sum_{\substack{\bx\in\Xi'_{d,n}(B)\\ \ba_{V}\in\Lambda_{\nu_{d,n}(\bx)}^{(W)}\cap\mathcal{C}_{\nu_{d,n}(\bx)}^{(\alpha)}}} \frac{1}{\Vert\nu_{d,n}(\bx)\Vert^{2}}.\end{equation}
Also, let
\[\iota'_{d,n}=\frac{V_{N_{d,n}-2}}{2\zeta(N_{d,n}-2)},\]
where $V_{N}$ is the volume of the unit ball $\mathcal{B}_{N}(1)$ for $N\ge 1$.
In the proofs of the following lemmas, we also need the following definitions from \cite{bbs2023}. For $\bx,\by\in \bZ^{n+1}$, $k \in \{0,1\}$, $\mathcal{R} \subset \bR^{N_{d,n}}$, $\Lambda$ a lattice and $W, \alpha$ as defined in \eqref{Wdef}, define
\begin{align}\label{lotsofdefs}\nonumber \Gamma_{\bx,\by}=\Lambda_{\nu_{d,n}(\bx)}\cap\Lambda_{\nu_{d,n}(\by)}, &\qquad
\Gamma_{\bx,\by}^{\mathrm{mix}}(W)=\Lambda_{\nu_{d,n}(\bx)}\cap\Lambda_{\nu_{d,n}(\by)}^{(W)}, \\
\mathcal{T}_{\by}^{\mathrm{mix}}=\mathcal{B}_{N_{d,n}}(A)\cap\mathcal{C}_{\nu_{d,n}(\by)}^{(\alpha)}, &\qquad
\mathcal{S}_{k}^{\ast}(\Lambda;\mathcal{R})=\sum_{\ba\in\Lambda\cap\bZ_{\mathrm{prim}}^{N_{d,n}}\cap\mathcal{R}} \frac{1}{\Vert\ba\Vert^{k}}.\end{align}
In the following four lemmas, we approximate each of $\hat{D}$, $\hat{D}^{\mathrm{mix}}$ and $\hat{D}^{\mathrm{loc}}$. Each will correspond to an analogous lemma from \cite[Section 4]{bbs2023}. The first is an analogue of \cite[Lemma 4.8]{bbs2023}.
\begin{lemma}\label{Dapprox}
Let $d\ge 2$ and $n\ge d$ with $(d,n)\notin \{(2,2),(3,3),(4,4)\}$. Then for all $m\ge 1$ and $A\ge B/(\log B)^{m}$, we have,
\[\hat{D}_{d,n}(A,B)=\iota'_{d,n}A^{N_{d,n}-2}E'_{d,n}(B)\left(1+O_{m}\left(\frac{(\log A)^{2n+6+m/2}}{A^{1/2}}\right)\right).\]
\begin{proof}
We have
\begin{align*}\hat{D}_{d,n}(A,B)&=\sum_{\substack{\bx,\by\in \Xi'_{d,n}(B)\\ \bx\ne \by}} (\log B)^{2n+2}\#\{V\in \bV_{d,n}(A):\bx,\by\in V(\mathcal{P})\}\\&=\frac{(\log B)^{2n+2}}{2}\sum_{(\bx,\by)\in\Omega'_{d,n}(B)}\mathcal{S}_{0}^{\ast}(\Gamma_{\bx,\by};\mathcal{B}_{N_{d,n}}(A)),\end{align*}
where the $1/2$ factor comes from the fact that there are two $\ba_{V}$ for each $V$.

Let
\[\hat{\Sigma}_{d,n}^{(1)}(A,B)=\frac{(\log B)^{2n+2}}{2}\sum_{\substack{(\bx,\by)\in\Omega'_{d,n}(B)\\\mu(\bx,\by)\le A}}\mathcal{S}_{0}^{\ast}(\Gamma_{\bx,\by};\mathcal{B}_{N_{d,n}}(A)),\]
where, as in \cite[(4.27)]{bbs2023},
\[\mu(\bx,\by)=3n^{2}\max\left\{\frac{\fd_{2}(\bx,\by)}{\fd_{3}(\bx,\by)},\frac{\Vert\bx\Vert\cdot\Vert\by\Vert}{\fd_{2}(\bx,\by)^{2}}\right\},\]
and analogously to \cite[(4.29)]{bbs2023}
\[\hat{\Sigma}_{d,n}^{(2)}(A,B)=\hat{D}_{d,n}(A,B)-\hat{\Sigma}_{d,n}^{(1)}(A,B).\]

By following, with slight modifications, the argument in \cite[Lemma 4.8]{bbs2023} up to equation (4.30) therein, using Lemma \ref{Esandwich} instead of \cite[Lemma 4.5]{bbs2023}, we arrive at
\begin{equation}\label{hatSigmaapprox}\hat{\Sigma}^{(1)}_{d,n}(A,B)=\iota'_{d,n}A^{N_{d,n}-2}E'_{d,n}(B)\left(1+O\left(\hat{\mathcal{E}}_{d,n}^{(1)}(A,B)\right)\right),\end{equation}
where
\[\hat{\mathcal{E}}_{d,n}^{(1)}(A,B)=\frac{(\log B)^{2n+2}}{AB^{2}}\sum_{(\bx,\by)\in\Omega'_{d,n}(B)}\frac{\mu(\bx,\by)}{\det(\Gamma_{\bx,\by})}.\]
We trivially have 
\begin{equation}\label{hatcalEboundcalE}\hat{\mathcal{E}}_{d,n}^{(1)}(A,B)\le (\log B)^{2n+2}\mathcal{E}_{d,n}^{(1)}(A,B)\end{equation}
where $\mathcal{E}_{d,n}^{(1)}$ is as defined in the proof of \cite[Lemma 4.8]{bbs2023}. It was shown there that when $A\ge B/(\log B)$, we have,
\begin{equation}\label{calEbound1}\mathcal{E}_{d,n}^{(1)}(A,B)\ll \frac{(\log A)^{9/2}}{A^{1/2}}.\end{equation}
We need a bound that holds for $A\ge B/(\log B)^{m}$, and fortunately the proof of \eqref{calEbound1} still works here with the only change being an increase in the exponent of $\log A$. We have,
\[\mathcal{E}_{d,n}^{(1)}(A,B)\ll \frac{(\log A)^{4+m/2}}{A^{1/2}}.\]
From this, and \eqref{hatcalEboundcalE} it follows that
\[\hat{\mathcal{E}}_{d,n}^{(1)}(A,B)\ll \frac{(\log A)^{2n+6+m/2}}{A^{1/2}}.\]
Similarly, we have
\begin{equation}\label{sigma2hatboundsigma2}\hat{\Sigma}_{d,n}^{(2)}(A,B)\le (\log B)^{2n+2}\Sigma_{d,n}^{(2)}(A,B),\end{equation}
where $\Sigma_{d,n}^{(2)}$ is as defined in \cite[(4.29)]{bbs2023}. When $A\ge B/(\log B)$ we have a bound \cite[(4.40)]{bbs2023} which states that
\[\Sigma_{d,n}^{(2)}(A,B)\ll A^{N_{d,n}-2} E_{d,n}(B)\frac{(\log A)^{2N_{n,n}-10}}{A^{2/3}},\]
where $E_{d,n}(B)$ is as defined in \cite[(4.6)]{bbs2023}. As before, the proof of this still works when $A\ge B/(\log B)^{m}$ at the cost of increasing the power of $\log A$. The bound we get is
\[\Sigma_{d,n}^{(2)}(A,B)\ll_{m}A^{N_{d,n}-2} E_{d,n}(B)\frac{(\log A)^{(2N_{n,n}-12)m+2}}{A^{2/3}}.\]
Combining this with \eqref{sigma2hatboundsigma2}, Lemma \ref{Esandwich} and the bound $E_{d,n}(B)\ll B^{2}$ from \cite[Lemma 4.5]{bbs2023}, we get
\[\hat{\Sigma}_{d,n}^{(2)}(A,B)\ll_{m} A^{N_{d,n}-2}E'_{d,n}(B)\frac{(\log A)^{(2N_{n,n}-12)m+2n+4}}{A^{2/3}},\]
which gives the result.
\end{proof}
\end{lemma}

Now, we deal with the $n=d=4$ case, in an analogue of \cite[Lemma 4.9]{bbs2023}.
\begin{lemma}\label{Dapprox44}
Suppose $A\ge B/(\log B)^{m}$, where $m\ge 1$, then
\[\hat{D}_{4,4}(A,B)=\iota'_{4,4}A^{N_{4,4}-2}E'_{4,4}(B)\left(1+O_{m}\left(\frac{1}{A^{1/21}}\right)\right).\]
\begin{proof}
Some parts of the proof of Lemma \ref{Dapprox} still work in this case. In particular, we may define $\hat{\Sigma}_{4,4}^{(1)}$ and $\hat{\Sigma}_{4,4}^{(2)}$ as before and \eqref{hatSigmaapprox}, \eqref{hatcalEboundcalE}, and \eqref{sigma2hatboundsigma2} still hold. As mentioned in the proof of \cite[Lemma 4.9]{bbs2023} the bound \cite[(4.31)]{bbs2023} also holds. Hence, we have,
\[\mathcal{E}_{4,4}^{(1)}(A,B)\ll (\log B)^{4}\frac{B^{2/3}}{A},\]
so, using \eqref{hatcalEboundcalE} and the fact that $B\ll A(\log A)^{m}$,
\[\hat{\mathcal{E}}_{4,4}^{(1)}(A,B)\ll \frac{(\log A)^{14+2m/3}}{A^{1/3}}.\]
This gives, using \eqref{hatSigmaapprox},
\[\hat{D}_{4,4}(A,B)=\iota'_{4,4}A^{N_{4,4}-2}E'_{4,4}(B)\left(1+O\left(\frac{(\log A)^{14+2m/3}}{A^{1/3}}\right)\right)+\hat{\Sigma}_{4,4}^{(2)}(A,B).\]
We bound $\hat{\Sigma}_{4,4}^{(2)}(A,B)$ in terms of $\Sigma_{4,4}^{(2)}(A,B)$ using \eqref{sigma2hatboundsigma2}.
In the proof of \cite[Lemma 4.9]{bbs2023} it is shown that when $A\ge B/(\log B)$,
\[\Sigma_{4,4}^{(2)}(A,B)\ll A^{N_{4,4}-2}E_{4,4}(B)\cdot \frac{(\log A)^{134}}{A^{5/101}}.\]
By the same argument, but using the assumption $A\ge B/(\log B)^{m}$ instead of $A\ge B/(\log B)$, we can show that
\[\Sigma_{4,4}^{(2)}(A,B)\ll A^{N_{4,4}-2}E_{4,4}(B)\cdot\frac{(\log A)^{132m+2}}{A^{5/101}}.\]
Recall that $E_{4,4}(B) \ll B^{2} \ll E'_{d,n}(B)$ from Lemma \ref{Esandwich} and \cite[Lemma 4.5]{bbs2023}. Then we then have, by \eqref{sigma2hatboundsigma2}:
\[\hat{\Sigma}_{4,4}^{(2)}(A,B)\ll (\log A)^{2n+2}\Sigma_{4,4}^{(2)}(A,B)\ll A^{N_{4,4}-2}E'_{4,4}(B)\cdot \frac{(\log A)^{2n+132m+4}}{A^{5/101}}.\]
The result follows.
\end{proof}
\end{lemma}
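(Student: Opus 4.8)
The plan is to mimic the proof of Lemma \ref{Dapprox} essentially verbatim, the only genuinely new feature being that in the exceptional case $(d,n)=(4,4)$ the crude error estimates coming from \cite[Lemma 4.8]{bbs2023} are too weak, so one must instead import the sharper bounds tailored to this case in \cite[Lemma 4.9]{bbs2023}. First I would record, exactly as at the start of the proof of Lemma \ref{Dapprox}, the identity
\[\hat{D}_{4,4}(A,B)=\frac{(\log B)^{2n+2}}{2}\sum_{(\bx,\by)\in\Omega'_{4,4}(B)}\mathcal{S}_{0}^{\ast}(\Gamma_{\bx,\by};\mathcal{B}_{N_{4,4}}(A)),\]
and decompose the right-hand side as $\hat{\Sigma}^{(1)}_{4,4}(A,B)+\hat{\Sigma}^{(2)}_{4,4}(A,B)$ according to whether $\mu(\bx,\by)\le A$ or not, with $\mu$ as in Lemma \ref{Dapprox}.

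For the main term $\hat{\Sigma}^{(1)}_{4,4}$ I would follow \cite[Lemma 4.8]{bbs2023} up to its equation (4.30), replacing \cite[Lemma 4.5]{bbs2023} by Lemma \ref{Esandwich}, to obtain
\[\hat{\Sigma}^{(1)}_{4,4}(A,B)=\iota'_{4,4}A^{N_{4,4}-2}E'_{4,4}(B)\left(1+O\bigl(\hat{\mathcal{E}}^{(1)}_{d,n}(A,B)\bigr)\right),\]
and then use the trivial bound $\hat{\mathcal{E}}^{(1)}_{d,n}\le(\log B)^{2n+2}\mathcal{E}^{(1)}_{d,n}$. In the $(4,4)$ case the relevant estimate is not the $A^{-1/2}$-type one but \cite[Equation (4.31)]{bbs2023}, i.e.\ $\mathcal{E}^{(1)}_{d,n}(A,B)\ll(\log B)^{4}B^{2/3}/A$; since $B\ll A(\log A)^{m}$ this yields $\hat{\mathcal{E}}^{(1)}_{d,n}(A,B)\ll(\log A)^{2n+6+2m/3}A^{-1/3}$, which is far more than enough.

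The heart of the matter is the tail $\hat{\Sigma}^{(2)}_{4,4}$. Here I would use $\hat{\Sigma}^{(2)}_{4,4}\le(\log B)^{2n+2}\Sigma^{(2)}_{4,4}$ and re-run the estimate of $\Sigma^{(2)}_{4,4}$ in \cite[Lemma 4.9]{bbs2023}. That lemma gives, for $A\ge B/\log B$, a bound of the shape $\Sigma^{(2)}_{4,4}(A,B)\ll A^{N_{4,4}-2}E_{4,4}(B)(\log A)^{134}A^{-5/101}$; carrying out the identical argument under the weaker hypothesis $A\ge B/(\log B)^{m}$, and with Lemma \ref{Esandwich} in place of \cite[Lemma 4.5]{bbs2023}, should cost only extra powers of $\log A$ that are linear in $m$, producing $\Sigma^{(2)}_{4,4}(A,B)\ll A^{N_{4,4}-2}E'_{4,4}(B)(\log A)^{132m+2}A^{-5/101}$ and hence $\hat{\Sigma}^{(2)}_{4,4}(A,B)\ll A^{N_{4,4}-2}E'_{4,4}(B)(\log A)^{2n+132m+4}A^{-5/101}$. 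Since $5/101>1/21$, the polylogarithmic factor is absorbed into a slightly worse power of $A$, so the tail is $O_{m}(A^{-1/21})$ times the main term; combining the two contributions gives the stated asymptotic.

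The main obstacle I expect is purely bookkeeping: verifying that every step of \cite[Lemma 4.8]{bbs2023} up to (4.30) and of the $\Sigma^{(2)}$ argument in \cite[Lemma 4.9]{bbs2023} survives the replacement of the range $A\ge B/\log B$ by $A\ge B/(\log B)^{m}$ with only a controlled, linear-in-$m$ increase in the exponents of $\log A$, and that the additional $(\log_{(3)} B)^{2n+2}$ carried by Lemma \ref{Esandwich} relative to \cite[Lemma 4.5]{bbs2023} never disturbs the power of $A$ in the error term. No genuinely new idea beyond those already deployed for Lemma \ref{Dapprox} should be required.
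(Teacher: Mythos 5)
Your proposal matches the paper's proof essentially step for step: the same decomposition into $\hat{\Sigma}^{(1)}_{4,4}+\hat{\Sigma}^{(2)}_{4,4}$, the same use of \cite[Equation (4.31)]{bbs2023} together with \eqref{hatcalEboundcalE} to get $\hat{\mathcal{E}}^{(1)}_{d,n}\ll(\log A)^{14+2m/3}A^{-1/3}$, and the same rerun of the $\Sigma^{(2)}_{4,4}$ estimate from \cite[Lemma 4.9]{bbs2023} under the relaxed hypothesis $A\ge B/(\log B)^{m}$ via \eqref{sigma2hatboundsigma2}, with the identical final exponents and the observation that $5/101>1/21$ absorbs the logarithms. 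This is correct and is the same argument as in the paper.
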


Now we prove an analogue of \cite[Lemma 4.10]{bbs2023}.
\begin{lemma}\label{Dmixapprox}
Let $d\ge 2$ and $n\ge d$ with $(d,n)\ne (2,2)$. Then for all $B^{5/6}\le A\le B^{2}$, we have,
\[\hat{D}_{d,n}^{\mathrm{mix}}(A,B)=\iota'_{d,n}A^{N_{d,n}-2}E'_{d,n}(B)\left(1+O\left(\frac{1}{(\log_{(2)} A)^{n-2-\eps}}\right)\right).\]
\begin{proof}
We have, recalling \eqref{lotsofdefs},
\[\hat{D}_{d,n}^{\mathrm{mix}}(A,B)=(\log B)^{2n+2}\frac{\alpha W}{2}\sum_{(\bx,\by)\in\Omega'_{d,n}(B)}\frac{\mathcal{S}_{1}^{\ast}\left(\Gamma_{\bx,\by}^{\mathrm{mix}}(W);\mathcal{T}_{\by}^{\mathrm{mix}}(A,\alpha)\right)}{\Vert \nu_{d,n}(\by)\Vert}.\]
We then take, analogously to \cite[(4.49)]{bbs2023},
\[\hat{\Sigma}_{1}^{\mathrm{mix}}(A,B)=(\log B)^{2n+2}\frac{\alpha W}{2}\sum_{\substack{(\bx,\by)\in\Omega'_{d,n}(B)\\\mu(\bx)\le A/W}}\frac{\mathcal{S}_{1}^{\ast}\left(\Gamma_{\bx,\by}^{\mathrm{mix}}(W);\mathcal{T}_{\by}^{\mathrm{mix}}(A,\alpha)\right)}{\Vert\nu_{d,n}(\by)\Vert},\]
where, as in \cite[(4.15)]{bbs2023},
\begin{equation}\label{mudef}\mu(\bx)=n\frac{\Vert \bx\Vert}{\fd_{2}(\bx)},\end{equation}
and
\[\hat{\Sigma}_{2}^{\mathrm{mix}}(A,B)=\hat{D}_{d,n}^{\mathrm{mix}}(A,B)-\hat{\Sigma}_{1}^{\mathrm{mix}}(A,B).\]
We now claim that:
\begin{align*}\hat{\Sigma}_{1}^{\mathrm{mix}}&(A,B)=\\ &\iota'_{d,n}A^{N_{d,n}-2}E'_{d,n}(B)\left(1+O\left(\frac{1}{w^{N_{d,n}-3}}+\frac{F'_{d,n}(B)}{B^{2}}+\frac{G_{d,n}(B)}{B^{2}}\alpha^{2n+2}\right)\right),\end{align*}
where, as in \cite{bbs2023} (the equation following \cite[(4.60)]{bbs2023}),
\[G_{d,n}(B)=\frac{1}{A^{3/4}}\sum_{(\bx,\by)\in\Omega_{d,n}(B)}\frac{\mu(\bx)}{\det\left(\Lambda_{\nu_{d,n}(\bx)}\cap\Lambda_{\nu_{d,n}(\by)}\right)}.\]
This follows by the same argument as \cite[(4.60)]{bbs2023}, but with Lemma \ref{Esandwich} instead of \cite[Lemma 4.5]{bbs2023}. It is important to mention that in \cite{bbs2023}, they use larger values of $w$ and $W$, but the argument also holds for our choice of $w$ and $W$.

By \cite[(4.61)]{bbs2023}, we have
\[G_{d,n}(B)\ll \frac{B^{2}}{A^{1/3}}.\]
Combining this with Lemma \ref{Fbound} and recalling the definition of $w$ gives:
\[\hat{\Sigma}_{1}^{\mathrm{mix}}(A,B)=\iota'_{d,n}A^{N_{d,n}-2}E'_{d,n}(B)\left(1+O\left(\frac{1}{(\log_{(2)} A)^{n-2-\eps}}\right)\right).\]
We have used the fact that $\log_{(2)} B\gg \log_{(2)} A$ which follows from $A\le B^{2}$ and the fact that $N_{d,n}-3\ge n-2$.

Now we bound $\hat{\Sigma}_{2}^{\mathrm{mix}}$. By examining the proof of \cite[Lemma 4.10]{bbs2023}, we see that \cite[(4.63)]{bbs2023} still holds for our smaller values of $w$ and $W$ and therefore, by Lemma \ref{Esandwich} and \cite[Lemma 4.5]{bbs2023}:
\[\hat{\Sigma}_{2}^{\mathrm{mix}}(A,B)\le (\log B)^{2n+2}\Sigma_{2}^{\mathrm{mix}}(A,B)\ll A^{N_{d,n}-2}E'_{d,n}(B)\frac{\alpha^{2n+2}}{A^{1/10}},\]
where $\Sigma_{2}^{\mathrm{mix}}$ is as defined in \cite[(4.50)]{bbs2023}. Putting together the bounds for $\hat{\Sigma}_{1}^{\mathrm{mix}}$ and $\hat{\Sigma}_{2}^{\mathrm{mix}}$ completes the proof.
\end{proof}
\end{lemma}

We now prove an analogue of \cite[Lemma 4.11]{bbs2023}.
\begin{lemma}\label{Dlocapprox}
Let $d\ge 2$ and $n\ge d$ with $(d,n)\ne (2,2)$. Then we have, for all $B^{1/2}\le A\le B^{2}$:
\[\hat{D}_{d,n}^{\mathrm{loc}}(A,B)=\iota'_{d,n}A^{N_{d,n}-2}E'_{d,n}(B)\left(1+O\left(\frac{1}{(\log_{(2)} A)^{n-2-\eps}}\right)\right).\]
\begin{proof}
By a similar argument to the proof of \cite[Lemma 4.11]{bbs2023}, we have
\[\hat{D}_{d,n}^{\mathrm{loc}}(A,B)=\iota'_{d,n}A^{N_{d,n}-2}E'_{d,n}(B)\left(1+O\left(\frac{1}{w^{N_{d,n}-3}}+\frac{F'_{d,n}(B)}{B^{2}}\right)\right).\]
The result then follows from Lemma \ref{Fbound} and the fact $\log_{(2)} B\gg \log_{(2)} A$ (because $A\le B^{2}$).
\end{proof}
\end{lemma}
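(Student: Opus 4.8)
The plan is to follow the proof of \cite[Lemma 4.11]{bbs2023}, exactly as the proof of Lemma \ref{Dmixapprox} follows \cite[Lemma 4.10]{bbs2023}. The crucial observation is that, once we interchange the order of summation, the inner quantity to be estimated is a count of \emph{primitive integer} points in a lattice, not of prime points; so, in contrast with the first half of Section \ref{latticeboundsection}, no new counting input is needed, and it suffices to check that the arguments of \cite{bbs2023} survive the smaller choices of $w$ and $W$ made here.

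First I would unfold the definitions. Expanding $N^{\mathrm{ploc}}_V(B)^2$ as a double sum over $\bx,\by\in\Xi'_{d,n}(B)$, the diagonal contribution $\bx=\by$ equals $(\log B)^{2n+2}\frac{\alpha^2 W^2}{\Vert\ba_V\Vert^2}\sum_{\bx}\Vert\nu_{d,n}(\bx)\Vert^{-2}$, the sum being over $\bx$ with $\ba_V\in\Lambda_{\nu_{d,n}(\bx)}^{(W)}\cap\mathcal{C}_{\nu_{d,n}(\bx)}^{(\alpha)}$, and this is cancelled term by term by $(\log B)^{n+1}\hat{\Delta}_V^{\mathrm{loc}}(B)$. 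Summing the off-diagonal part over $V$ — equivalently over primitive $\ba\in\bZ^{N_{d,n}}$ up to sign, the summand being even in $\ba$ — and interchanging with the sum over $(\bx,\by)\in\Omega'_{d,n}(B)$ gives
\[\hat{D}_{d,n}^{\mathrm{loc}}(A,B)=(\log B)^{2n+2}\frac{\alpha^2 W^2}{2}\sum_{(\bx,\by)\in\Omega'_{d,n}(B)}\frac{\mathcal{S}_2^{\ast}\left(\Lambda_{\nu_{d,n}(\bx)}^{(W)}\cap\Lambda_{\nu_{d,n}(\by)}^{(W)};\;\mathcal{B}_{N_{d,n}}(A)\cap\mathcal{C}_{\nu_{d,n}(\bx)}^{(\alpha)}\cap\mathcal{C}_{\nu_{d,n}(\by)}^{(\alpha)}\right)}{\Vert\nu_{d,n}(\bx)\Vert\cdot\Vert\nu_{d,n}(\by)\Vert}.\]

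Next, paralleling the proof of Lemma \ref{Dmixapprox}, I would split this sum according to whether the underlying lattices are too lopsided (a condition of the shape $\mu(\bx),\mu(\by)\le A/W$) and dispose of the lopsided remainder trivially, using \cite[Lemma 4.5]{bbs2023}, Lemma \ref{Esandwich}, and the tail bound from \cite[Lemma 4.11]{bbs2023} (whose proof survives the smaller $w,W$ at the cost of extra powers of $\alpha=\log B$), to see that it is $O\!\left(A^{N_{d,n}-2}E'_{d,n}(B)\,(\log A)^{O(1)}/A^{c}\right)$ for some fixed $c>0$, hence negligible since $A\ge B^{1/2}$. On the main range one evaluates $\mathcal{S}_2^{\ast}$ by the lattice-point estimate of \cite[Lemma 4.11]{bbs2023}: for a fixed admissible value $(\langle\ba,\nu_{d,n}(\bx)\rangle,\langle\ba,\nu_{d,n}(\by)\rangle)=(m_1,m_2)$ — a multiple of $W$ in an interval of length $\asymp\Vert\ba\Vert\Vert\nu_{d,n}(\bx)\Vert/\alpha$, resp.\ $\asymp\Vert\ba\Vert\Vert\nu_{d,n}(\by)\Vert/\alpha$ — the vectors $\ba$ form a coset of $\Lambda_{\nu_{d,n}(\bx)}\cap\Lambda_{\nu_{d,n}(\by)}$, a lattice of rank $N_{d,n}-2$; summing over $(m_1,m_2)$, restricting to the primitive vectors in each coset (which produces the factor $\zeta(N_{d,n}-2)^{-1}$), and weighting by $\Vert\ba\Vert^{-2}$, the normalisation $\alpha^2 W^2/(\Vert\nu_{d,n}(\bx)\Vert\Vert\nu_{d,n}(\by)\Vert)$ is precisely what is needed to produce the term $\iota'_{d,n}A^{N_{d,n}-2}/\det(\Lambda_{\nu_{d,n}(\bx)}\cap\Lambda_{\nu_{d,n}(\by)})$ for each pair. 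Summing over $\Omega'_{d,n}(B)$ and recalling the definition of $E'_{d,n}(B)$ gives
\[\hat{D}_{d,n}^{\mathrm{loc}}(A,B)=\iota'_{d,n}A^{N_{d,n}-2}E'_{d,n}(B)\left(1+O\!\left(\frac{1}{w^{N_{d,n}-3}}+\frac{F'_{d,n}(B)}{B^2}\right)\right),\]
where the $w^{-(N_{d,n}-3)}$ term records the error when the two congruences modulo $W$ fail to be in general position, and $F'_{d,n}(B)/B^2$ collects the contribution of the ``flat'' pairs — those with $\Delta(\bx,\by)$ large or $\mathcal{G}(\bx,\by)\nmid W/\rad(W)$ — exactly as in \cite[Lemma 4.11]{bbs2023}. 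Finally, Lemma \ref{Fbound} gives $F'_{d,n}(B)/B^2\ll(\log_{(2)} B)^{-(n-2-\eps)}$; the definition of $w$ gives $w\gg(\log_{(2)} B)^{1-\eps}$ and one has $N_{d,n}-3\ge n-2$, so $w^{-(N_{d,n}-3)}\ll(\log_{(2)} B)^{-(n-2)(1-\eps)}$; and $A\le B^2$ gives $\log_{(2)} B\gg\log_{(2)} A$. Adjusting $\eps$ yields the claim.

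The one genuinely delicate point — the reason this is more than a bare citation — is verifying that the slab-counting in \cite[Lemma 4.11]{bbs2023} is insensitive to the reduction of $w$ and $W$: the main term is assembled by counting multiples of $W$ in intervals of length $\asymp\Vert\ba\Vert\Vert\nu_{d,n}(\bx)\Vert/\alpha$ (and likewise for $\by$), so one needs $W$ small enough relative to these lengths — and to $A$, via $A\ge B^{1/2}$ — for that count to be $\asymp\text{length}/W$ with a controlled error; and one needs the arithmetic-correction estimate yielding the $w^{-(N_{d,n}-3)}$ term, which ultimately rests on the divisor-sum bounds behind Lemma \ref{badlatticecount}, to hold with $w=\log_{(2)} B/\log_{(3)} B$ in place of the larger value of \cite{bbs2023}. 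Both checks are routine; everything else is bookkeeping with Lemmas \ref{Esandwich} and \ref{Fbound} together with $\log_{(2)} B\gg\log_{(2)} A$.
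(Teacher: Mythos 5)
Your proposal is correct and follows essentially the same route as the paper: the paper's proof simply cites the argument of \cite[Lemma 4.11]{bbs2023} to obtain the intermediate estimate with error $O\bigl(w^{-(N_{d,n}-3)}+F'_{d,n}(B)/B^{2}\bigr)$ and then concludes via Lemma \ref{Fbound} and $\log_{(2)}B\gg\log_{(2)}A$, exactly as you do. Your additional unpacking of the diagonal cancellation, the interchange of summation leading to $\mathcal{S}_2^{\ast}$ over $\Gamma_{\bx,\by}$, and the check that the slab-counting tolerates the smaller $w,W$ is just the content of that citation made explicit (and your final $\eps$-adjustment, replacing $\eps$ by $\eps/(n-2)$ in the $w$-term, is the same bookkeeping the paper performs implicitly).
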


There is one more lemma to prove before we can prove Theorem \ref{mainvar}. This is an analogue of \cite[Lemma 4.7]{bbs2023}.
\begin{lemma}\label{sumNVbound}
Let $d\ge 2$ and $n\ge d$. Assume that $A\ge B^{4/5}$. Then we have
\[\frac{1}{\#\bV_{d,n}(A)}\sum_{V\in\bV_{d,n}(A)}N'_{V}(B)\ll \frac{B}{A}.\]
\begin{proof}
We mainly follow the proof of \cite[Lemma 4.7]{bbs2023}. First, note that
\[\sum_{V\in\bV_{d,n}(A)}N'_{V}(B)=\frac{(\log B)^{n+1}}{2} \sum_{\bx \in \Xi'_{d,n}(B)} \#\left( \Lambda_{\nu_{d,n}(\bx)}\cap \bZ_{\mathrm{prim}}^{N_{d,n}}\cap \mathcal{B}_{N_{d,n}}(A)\right).\]
Recalling the definition \eqref{mudef} of $\mu(\bx)$, we define,
\[\hat{M}_{d,n}^{(1)}(A,B)=\frac{(\log B)^{n+1}}{2} \sum_{\substack{\bx \in \Xi'_{d,n}(B)\\ \mu(\bx)\le A}} \#\left( \Lambda_{\nu_{d,n}(\bx)}\cap \bZ_{\mathrm{prim}}^{N_{d,n}}\cap \mathcal{B}_{N_{d,n}}(A)\right)\]
and
\[\hat{M}_{d,n}^{(2)}(A,B)=\sum_{V\in\bV_{d,n}(A)}N'_{V}(B)-\hat{M}_{d,n}^{(1)}(A,B).\]
These are analogous to the quantities $M_{d,n}^{(1)}(A,B)$ and $M_{d,n}^{(2)}(A,B)$ from the proof of \cite[Lemma 4.7]{bbs2023}. Following the same argument as in \cite[Lemma 4.7]{bbs2023}, we can show that
\[\hat{M}_{d,n}^{(1)}(A,B)\ll (\log B)^{n+1}\sum_{\bx \in \Xi'_{d,n}(B)} \frac{A^{N_{d,n}-1}}{\|\bx\|^{d}}\ll A^{N_{d,n}-1}B.\]
We also have, from the penultimate equation in the proof of \cite[Lemma 4.7]{bbs2023},
\begin{align*}\hat{M}_{d,n}^{(2)}(A,B)&\ll (\log B)^{n+1}M_{d,n}^{(2)}(A,B)\\ &\ll A^{N_{d,n}-1}\left(\frac{B^{1+1/(n+1-d)}}{A^{n}}+\frac{B^{(n+2)/(n+1-d)}}{A^{2n}}\right)(\log B)^{n+2},\end{align*}
which is sufficient, recalling that $A\ge B^{4/5}$.
\end{proof}
\end{lemma}

We are now finally ready to deduce our main variance estimate.
\begin{proof}[Proof of Theorem \ref{mainvar}]
Defining
\[\hat{K}(A,B)=(\log B)^{n+1}\sum_{V\in \bV_{d,n}(A)} \left(N'_{V}(B)+\hat{\Delta}_{V}^{\mathrm{mix}}(B)+\hat{\Delta}_{V}^{\mathrm{loc}}(B)\right),\]
we have
\begin{align*}\sum_{V\in\bV_{d,n}(A)}\left(N'_{V}(B)-N^{\mathrm{ploc}}_{V}(B)\right)^{2}&=\hat{D}_{d,n}(A,B)-2\hat{D}_{d,n}^{\mathrm{mix}}(A,B)\\&+\hat{D}_{d,n}^{\mathrm{loc}}(A,B)+O(\hat{K}(A,B)).\end{align*}
Recall that $B=A(\log A)^{m}$. By combining Lemmas \ref{Dapprox}, \ref{Dapprox44}, \ref{Dmixapprox} and \ref{Dlocapprox}, and using the upper bound for $E'_{d,n}$ given in Lemma \ref{Esandwich}, we get
\[\frac{1}{\#\bV_{d,n}(A)}\left(\hat{D}_{d,n}(A,B)-2\hat{D}_{d,n}^{\mathrm{mix}}(A,B)+\hat{D}_{d,n}^{\mathrm{loc}}(A,B)\right)\ll \frac{B^{2}}{A^{2}(\log_{(2)} B)^{n-2-\eps}}.\]
Now we claim that
\begin{equation}\label{hatKbound}\frac{\hat{K}(A,B)}{\#\bV_{d,n}(A)}\ll (\log B)^{n+1}\frac{B}{A}.\end{equation}
We bound the contribution from $N'_{V}(B)$ using Lemma \ref{sumNVbound}, so we consider $\hat{\Delta}_{V}^{\mathrm{mix}}(B)$ and $\hat{\Delta}_{V}^{\mathrm{loc}}(B)$. Recalling \eqref{Deltamixdef} and applying partial summation, we obtain, in a similar way to \cite[Equation (4.71)]{bbs2023} 
\begin{align*}\frac{1}{\#\bV_{d,n}(A)}\sum_{V\in\bV_{d,n}(A)}\hat{\Delta}_{V}^{\mathrm{mix}}(B)&\ll (\log B)^{n+1}\frac{\alpha W}{A^{2}} \sum_{\bx \in \Xi'_{d,n}(B)} \frac{1}{\| \nu_{d,n}(\bx) \|} \\&\ll \frac{B(\log B)^{1+o(1)}}{A^{2}}\end{align*}
and in a similar way to \cite[(4.72)]{bbs2023}, it follows from the trivial bound
\[\hat{\Delta}_{V}^{\mathrm{loc}}(B) \ll (\log B)^{n+1} \frac{\alpha^{2}W^{2}}{\|\ba_{V}\|^{2}} \sum_{\bx \in \Xi'_{d,n}(B)} \frac{1}{\|\nu_{d,n}(\bx)\|},\]
that
\[\frac{1}{\#\bV_{d,n}(A)}\sum_{V\in\bV_{d,n}(A)}\hat{\Delta}_{V}^{\mathrm{loc}}(B)\ll \frac{B(\log B)^{2+o(1)}}{A^{2}}.\]
The inequality \eqref{hatKbound} then follows. It then follows from our choice of $B$ that
\[\frac{\hat{K}(A,B)}{\#\bV_{d,n}(A)}\ll \frac{B^{2}}{A^{2}(\log B)^{m-(n+1)}}.\]
Now recall that we assumed $m>n+1$.
\end{proof}
\begin{remark}
The lower bound of $m>n+1$ is necessary because of the contribution from $(\log B)^{n+1}\sum_{V} N'_{V}(B)$. This is a count of solutions to $\langle \ba,\nu_{d,n}(\bx)\rangle=0$ weighted by $(\log B)^{2n+2}$. These correspond to the weighted solutions $(\ba,\bx,\by)$ to $\langle \ba,\nu_{d,n}(\bx)\rangle=\langle \ba,\nu_{d,n}(\by)\rangle=0$ with $\bx=\by\in \mathcal{P}^{n+1}$, with a weight of $(\log B)^{n+1}$ for each of the variables $\bx$, $\by$. This results in a weighting that is too large by a factor of $(\log B)^{n+1}$, which means that unless $B/A$ is large enough, it will dominate over $\hat{D}_{d,n}(A,B)$, (which is the weighted count of solutions with $\bx \ne \by$). 
\end{remark}

\section{Bounding the local counting function from below}\label{localcountlowerboundsection}
In this section, we establish Theorem \ref{localcountnotsmall}.
\begin{defn} Recall the definition \eqref{defcalC} of $\mathcal{C}_{\nu_{d,n}(\bu)}^{(\gamma)}$. For $\ba\in \bR^{N_{d,n}}$ and $\gamma>0$, let
\[\tau'(\ba;\gamma)=\gamma\cdot\mathrm{vol}\left(\left\{\bu\in\mathcal{B}_{n+1}(1)\cap(\bR^{+})^{n+1}:\ba\in \mathcal{C}_{\nu_{d,n}(\bu)}^{(\gamma)}\right\}\right).\]
Then, recalling that $\alpha=\log B$, we let
\begin{equation}\label{defarchfactor}\fJ'_{V}(B)=\tau'(\ba_{V};\alpha),\end{equation}
which is the Archimedean local factor.

Given $Q\ge 1$ and $N\ge 1$ integers and $\ba\in\bZ^{N_{d,n}}$, we let
\[\fR'_{N}(Q)=\left((\bZ/Q\bZ)^{\times}\right)^{N}\]
and
\[\sigma'(\ba;Q)=\frac{Q}{\varphi(Q)^{n+1}}\cdot\#\left\{\bb\in \fR'_{n+1}(Q):\ba\in\Lambda_{\nu_{d,n}(\bb)}^{(Q)}\right\}.\]
We then let
\[\fS'_{V}(B)=\sigma'(\ba_{V};W),\]
which is the non-Archimedean local factor.
\end{defn}
\begin{lemma}\label{localcountlowerbound}
Let $d\ge 2$, $n\ge d$, $A\ge 1$ and $B\ge 1$. For any $V\in\bV_{d,n}(A)$, we have, for all $C>0$:
\[\fS'_{V}(B)\fI'_{V}(B)\ll_{C} \frac{A}{B}N^{\mathrm{ploc}}_{V}(B)+\frac{1}{(\log B)^{C}}.\]
\begin{proof}
As in the proof of \cite[Lemma 5.1]{bbs2023}, we can show
\begin{equation}\label{localcountlowerboundineq1}(\log B)^{n+1}\sum_{\substack{\bx\in\Xi'_{d,n}(B)\\\ba_{V}\in\Lambda_{\nu_{d,n}(\bx)}^{(W)}\cap\mathcal{C}_{\nu_{d,n}(\bx)}^{(\alpha)}}} 1\ll \frac{AB^{d/(n+1-d)}}{\alpha W}N^{\mathrm{ploc}}_{V}(B).\end{equation}
We break the summation into residue classes modulo $W$, and discard the contribution from those classes which do not lie in $\fR'_{n+1}(W)$ to get:  
\begin{equation}\label{localcountlowerboundineq2}\sum_{\substack{\bx\in\Xi'_{d,n}(B)\\\ba_{V}\in\Lambda_{\nu_{d,n}(\bx)}^{(W)}\cap\mathcal{C}_{\nu_{d,n}(\bx)}^{(\alpha)}}} 1\ge\sum_{\substack{\bb\in\fR'_{n+1}(W)\\\ba_{V}\in\Lambda_{\nu_{d,n}(\bb)}^{(W)}}}\#\left\{\bx\in \Xi'_{d,n}(B):\begin{array}{l l}\bx\equiv \bb \:\text{(mod $W$)}\\\ba_{V}\in \mathcal{C}_{\nu_{d,n}(\bx)}^{(\alpha)}\end{array}\right\}.\end{equation}
We define
\[R_{V,B}=\left\{\bu\in \mathcal{B}_{n+1}\left(B^{1/(n+1-d)}\right)\cap(\bR^{+})^{n+1}:\ba_{V}\in\mathcal{C}_{\nu_{d,n}(\bu)}^{(\alpha)}\right\}\]
and
\[\mathcal{P}_{\bb}=\left\{\bx\in \mathcal{P}^{n+1}\setminus \Span\{(1,\dots,1)\}:\bx\equiv \bb \text{(mod $W$)}\right\}\]
so that
\[\#\left\{\bx\in \Xi'_{d,n}(B):\begin{array}{l l}\bx\equiv \bb \:\text{(mod $W$)}\\\ba_{V}\in \mathcal{C}_{\nu_{d,n}(\bx)}^{(\alpha)}\end{array}\right\}=\#\left(R_{V,B}\cap \mathcal{P}_{\bb}\right)\]
Then if we combine \eqref{localcountlowerboundineq1} and \eqref{localcountlowerboundineq2}, we get
\begin{equation}\label{localcountlowerboundineq3}\frac{AB^{d/(n+1-d)}}{\alpha W}N_{V}^{\mathrm{ploc}}(B)\gg (\log B)^{n+1}\sum_{\substack{\bb\in\fR'_{n+1}(W)\\ \ba_{V}\in\Lambda_{\nu_{d,n}(\bb)}^{(W)}}}\#\left(R_{V,B}\cap\mathcal{P}_{\bb}\right).\end{equation}
We split the region $R_{V,B}$ into cubes of side length approximately $B^{\theta/(n+1-d)}$ where $3/5<\theta<1$. On such a cube, we can then approximate the number of points with prime coordinates congruent to $\bb$ by using a short intervals version of the Siegel-Walfisz theorem. Let $x$ be large and $q\ll (\log x)^{C_{1}}$, $\gcd(a,q)=1$ where $C_{1}>0$ is arbitrary and fixed. Then for $x/2<z\le x$ and $h\le x^{\theta}$, we have,
\begin{align}\label{SWshortintervals}\pi(z+h,q;a)-\pi(z,q;a)&\gg \frac{\psi(z+h,q;a)-\psi(z,q;a)}{\log x}\\ \nonumber &\ge \frac{h}{\varphi(q)(\log x)}-O\left(\frac{x^{\theta}}{(\log x)^{C_{2}}}\right),\end{align}
where $C_{2}>0$ is arbitrarily large and fixed. This follows readily from the main result of \cite{pps1984}. For this to be nontrivial, we will choose some arbitrary $C_{3}>0$ and let $z_{0},z_{1},\dots,z_{m}$ be such that
\[\frac{B^{1/(n+1-d)}}{(\log B)^{C_{3}}}+\eta=z_{0}<z_{1}<\cdots<z_{m}=B^{1/(n+1-d)}+\eta\]
and $z_{j+1}-z_{j}=h$ for all $0\le j\le m-1$, where $\eta=\eta(V,B)\in (0,1)$ will be chosen later and $m$ is chosen such that we have
\[z_{0}^{\theta}\ll h\le z_{0}^{\theta}.\]
For $0\le j \le m-1$, we apply \eqref{SWshortintervals} with $x=z=z_{j}$ and $q=W$. By Lemma \ref{Wbound}, we have $W \ll \log B$ and we also have $\log z_{j} \asymp \log B$, so we get
\[\pi(z_{j+1},W;a)-\pi(z_{j},W;a)\ge \frac{h}{\varphi(W)(\log B)}-O\left(\frac{z_{j}^{\theta}}{(\log B)^{C_{2}}} \right).\]
We note that $z_{j}^{\theta}\le (z_{0}(\log B)^{C_{3}})^{\theta} \ll h (\log B)^{\theta C_{3}}$, so the error term is at most
\[O\left( h(\log B)^{\theta C_{3}-C_{2}} \right).\]
Note also that $\varphi(W) \le W \ll (\log B)$. Hence, if we choose $C_{2}$ to be large enough in terms of $C_{3}$, then
\[\pi(z_{j+1},W;a)-\pi(z_{j},W;a)\gg \frac{h}{\varphi(W)(\log B)}\]
for all $0\le j\le m-1$ and $\gcd(a,W)=1$.

Now we consider the cubes $C_{\bj}=C_{j_{0},\dots,j_{n}}=[z_{j_{0}},z_{j_{0}+1}]\times\cdots\times[z_{j_{n}},z_{j_{n}+1}]$ for $\bj \in \{0,\dots,m-1\}^{n+1}$.  We have,
\begin{equation}\label{cubecount}\#\left(C_{\bj}\cap \mathcal{P}_{\bb}\right)\gg \frac{h^{n+1}}{\varphi(W)^{n+1}(\log B)^{n+1}}\end{equation}
for all $\bj\in \{0,\dots,m-1\}^{n+1}$. We will obtain a lower bound for $\#(R_{V,B}\cap \mathcal{P}_{\bb})$ by restricting ourselves to those cubes which lie entirely within $R_{V,B}$, on which we can apply \eqref{cubecount}, and show that the total volume of these cubes is close to the volume of $R_{V,B}$. Any points in $R_{V,B}$ which are not covered by one of these cubes must either lie in a cube which intersects $R_{V,B}$ but is not completely contained in it, or one of its coordinates must be less than $z_{0}=B^{1/(n+1-d)}/(\log B)^{C_{3}}+\eta$. Hence we can split $R_{V,B}$ into pieces $R_{V,B}^{(j)}$, $j \in \{1,2,3\}$ where
\begin{align*}
R_{V,B}^{(1)}&=\bigcup_{\substack{\bj \in \{0,\dots,m-1\}^{n+1}\\ C_{\bj} \subset R_{V,B}}} C_{\bj}\\
R_{V,B}^{(2)}&=\bigcup_{\substack{\bj \in \{0,\dots,m-1\}^{n+1}\\ C_{\bj} \cap R_{V,B} \ne \emptyset, \:C_{\bj} \not\subset R_{V,B}}} C_{\bj} \\
R_{V,B}^{(3)}&=\bigcup_{i=0}^{n} R_{V,B} \cap \{\bx \in \left(\bR^{+}\right)^{n+1}: x_{i} < B^{1/(n+1-d)}/(\log B)^{C_{3}}+\eta\}.
\end{align*}
As $R_{V,B}$ is contained in the ball of radius $B^{1/(n+1-d)}$ centred at the origin, we get
\begin{equation}\label{RVBedgevolume}\mathrm{vol}\left(R_{V,B}^{(3)}\right) \ll B^{(n+1)/(n+1-d)}/(\log B)^{C_{3}}.\end{equation}
For $\bR_{V,B}^{(2)}$, let $\fC$ be the number of cubes of the form $C_{\bj}$ which intersect $R_{V,B}$, but do not lie completely within it. This is the same as the number of cubes which intersect the boundary of $R_{V,B}$. The boundary of $R_{V,B}$ is contained in the union of two algebraic sets, namely the sphere of radius $B^{1/(n+1-d)}$ and centre $\mathbf{0}$, and $\{\bx\in \bR^{n+1}:4\alpha^{2}\langle \ba_{V},\nu_{d,n}(\bx)\rangle^{2}=\Vert \ba_{V}\Vert^{2}\Vert \nu_{d,n}(\bx)\Vert^{2}\}$. We will call these sets $X_{1}$ and $X_{2}$ respectively.

The set $X_{1}$ is the vanishing set of a single nonzero polynomial of degree $2$, and has codimension 1 (i.e. dimension $n$, where we mean the dimension as a real manifold), while $X_{2}$ is the vanishing set of the function $g_{\ba}(\bx)=4\alpha^{2}\langle \ba_{V},\nu_{d,n}(\bx)\rangle^{2}-\Vert \ba_{V}\Vert^{2}\Vert \nu_{d,n}(\bx)\Vert^{2}$, which is a polynomial in $\bx$ of degree at most $2d$ (whose coefficients depend on $\ba_{V}$ and $B$). We want $X_{2}$ to also have codimension at least 1. This will hold provided $g_{\ba}$ is not the zero polynomial. To see this, suppose that $g_{\ba}$ is identically zero. Then $\|\nu_{d,n}(\bx)\|^{2}=(2\alpha/\|\ba_{V}\|)^{2}\langle \ba_{V},\nu_{d,n}(\bx)\rangle^{2}$ for all $\bx \in \bR^{n+1}$. This means that $\|\nu_{d,n}(\bx)\|^{2}$ is a square in the ring $\bR[\bx]$. But then $h(x)=\| \nu_{d,n}(1,x,0,\dots,0)\|^{2}=1+x^{2}+\cdots+x^{2d}$ is a square in $\bR[x]$. We can easily factorise $h(x)$ into distinct linear factors over $\bC$, so it is squarefree and this is impossible.

We now use the main theorem of \cite{sz2014}. For some integers $N, D\ge 1$, suppose we have an algebraic set $X \subset \bR^{N}$ of (manifold) dimension $l<N$, which is the vanishing set of a set of polynomials with total degree $D$. We define a box to be a set of the form $\mathcal{B}=[a_{1},b_{1}]\times\cdots\times [a_{N},b_{N}]$ for integers $a_{i}\le b_{i}$. A box can be divided up into unit cubes whose vertices lie in $\bZ^{N}$. The main theorem of \cite{sz2014} tells us that the number of these unit cubes which intersect $X$ is at most a certain quantity $N_{\mathcal{B}}(D,l)$, where
\[N_{\mathcal{B}}(D,l) \ll_{D} \max_{1\le i_{1} <\dots<i_{l}\le N} \prod_{j=1}^{l}(b_{i_{j}}-a_{i_{j}}),\]
provided that none of the vertices of these cubes intersect a certain set $S_{X}$. This set $S_{X}$ is the union of all $(N-l-1)$-dimensional planes parallel to a coordinate subspace which intersect $X$. Crucially, the implied constant only depends on $D$ and $N$, not on the coefficients of the polynomials defining $X$. 
\begin{remark}
In the statement of the main theorem of \cite{sz2014}, the set $S_{X}$ is defined to instead be a union of $(N-l)$-dimensional planes. However, this appears to be a typo, and in the proof it is defined as we have stated.
\end{remark}
In order to apply the theorem, we rescale everything by $h^{-1}$, so that our cubes $C_{\bj}$ become unit cubes, and translate them by a small distance so that their vertices lie in $\bZ^{n+1}$. The resulting cubes form a box $\mathcal{B}$ of width less than $B^{1/(n+1-d)}h^{-1}$. Let $Y_{1},Y_{2}$ be the result of dilating and translating $X_{1}$ and $X_{2}$ respectively. Assume for now that
\begin{equation}\label{SYcondition}S_{Y_{1}}\cap \bZ^{n+1}=S_{Y_{2}}\cap\bZ^{n+1}=\emptyset.
\end{equation}
The quantity $\fC$ is at most the number of unit cubes in $\mathcal{B}$ which intersect one of $Y_{1},Y_{2}$, so by applying \cite{sz2014} with $N=n+1$, $D\le 2d$, $X=Y_{i}$, and $l=\dim Y_{i}\le n$ we get the bound
\[\fC \ll \left( B^{1/(n+1-d)}h^{-1}\right)^{n}.\]

Each cube $C_{\bj}$ has volume $h^{n+1}$ so it follows that
\begin{equation}\label{boundarycubes}\mathrm{vol}\left(R_{V,B}^{(2)}\right) \le \fC h^{n+1} \ll B^{n/(n+1-d)}h\ll B^{(n+\theta)/(n+1-d)}.\end{equation}
It remains to verify \eqref{SYcondition}, and this is where we make our choice of $\eta$. The main theorem of \cite{sz2014} also tells us that the sets $S_{Y_{1}},S_{Y_{2}}$ are algebraic sets properly contained in $\bR^{n+1}$, so in particular they are nowhere dense. Hence, if we translate them by a sufficiently small amount then we can ensure that they do not intersect the discrete set $\bZ^{n+1}$. Changing the value of $\eta$ corresponds to a translation of $Y_{1},Y_{2}$, and hence of $S_{Y_{1}},S_{Y_{2}}$, so there is some choice of $\eta$ for which \eqref{SYcondition} holds.

Combining \eqref{RVBedgevolume} and \eqref{boundarycubes}, we get
\begin{equation}\label{volapprox}\mathrm{vol}\left(R_{V,B}^{(1)}\right) = \mathrm{vol}\left(R_{V,B}\right) + O\left( B^{(n+1)/(n+1-d)}/(\log B)^{C_{3}} \right).\end{equation}

Now we use \eqref{cubecount} for each cube lying entirely within $R_{V,B}$ to get that
\begin{equation}\label{totalcubecount}\#(R_{V,B}\cap\mathcal{P}_{\bb}) \gg \frac{\mathrm{vol}\left(R_{V,B}^{(1)}\right)}{\varphi(W)^{n+1}(\log B)^{n+1}}.\end{equation}
Combining \eqref{volapprox} and \eqref{totalcubecount} and using the fact that
\[\mathrm{vol}(R_{V,B})=B^{(n+1)/(n+1-d)}\fJ'_{V}(B)/\alpha,\]
we get
\[\#(R_{V,B}\cap\mathcal{P}_{\bb})+\frac{B^{(n+1)/(n+1-d)}}{(\log B)^{C_{3}}}\gg \frac{B^{(n+1)/(n+1-d)}}{\varphi(W)^{n+1}(\log B)^{n+1}}\frac{\fJ'_{V}(B)}{\alpha}.\]
If we then put this into \eqref{localcountlowerboundineq3}, and recall the definition of $\fS'_{V}(B)$, we get
\[B^{(n+1)/(n+1-d)}\frac{\fS'_{V}(B)}{W}\frac{\fJ'_{V}(B)}{\alpha}\ll \frac{AB^{d/(n+1-d)}}{\alpha W}N^{\mathrm{ploc}}_{V}(B)+W^{n+1}\frac{B^{(n+1)/(n+1-d)}}{(\log B)^{C_{3}-n-1}}.\]
Rearranging and recalling Lemma \ref{Wbound} gives the result.
\end{proof}
\end{lemma}
The following two lemmas will be proved in the ensuing subsections.
\begin{lemma}\label{nonarchnotsmall}
Let $d\ge 2$ and $n\ge 3$. Let $\psi,\beta:\bR_{>0}\rightarrow\bR_{>1}$ be such that $\psi(A),\beta(A)\le A$. Then for all $C,\eps>0$, we have
\[\frac{1}{\#\bV_{d,n}^{\mathrm{ploc}}(A)} \#\left\{V\in\bV_{d,n}^{\mathrm{ploc}}(A):\fS'_{V}(A\psi(A))<\frac{C}{\beta(A)}\right\}\ll_{C,\eps} \frac{1}{\beta(A)^{1/(3n)-\eps}}.\] 
\end{lemma}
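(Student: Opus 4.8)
The plan is to prove the stronger moment bound
\[\frac{1}{\#\bV_{d,n}^{\mathrm{ploc}}(A)}\sum_{V\in\bV_{d,n}^{\mathrm{ploc}}(A)}\fS'_V(A\psi(A))^{-\delta}\ll_\delta 1,\qquad \delta=\frac{1}{4n},\]
uniformly in $A$, and then conclude by Markov's inequality: if $\fS'_V(A\psi(A))<C\xi(A)^{-1/4}$ then $\fS'_V(A\psi(A))^{-\delta}>C^{-\delta}\xi(A)^{1/(16n)}$, so the number of such $V$ is $\ll_C \#\bV_{d,n}^{\mathrm{ploc}}(A)\,\xi(A)^{-1/(16n)}$. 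To obtain the moment bound, set $B=A\psi(A)$ and recall $W=\prod_{p\le w}p^{a_p}$ with $a_p=\lceil\log w/\log p\rceil+1$, so that $\fS'_V(B)=\sigma'(\ba_V;W)=\prod_{p\le w}\sigma'(\ba_V;p^{a_p})$ by the Chinese remainder theorem. By Lemma \ref{Wbound} we have $W=o(A^{1/N_{d,n}})$, so the primitive coefficient vectors of $V\in\bV_{d,n}(A)$ equidistribute modulo $W$; together with a sieve over the primes $p>w$ exactly as in the proof of Theorem \ref{positivelocaldensity} (via the lemmas of \cite{ps1999}), and using that $\#\bV_{d,n}^{\mathrm{ploc}}(A)\gg\#\bV_{d,n}(A)$ by Theorem \ref{positivelocaldensity}, this factors as $\bigl(\prod_{p\le w}\bE_p(\delta)\bigr)(1+o(1))$, where $\bE_p(\delta)$ is the average of $\sigma'(\ba;p^{a_p})^{-\delta}$ over $\ba\bmod p^{a_p}$ conditioned on $f_\ba$ having a zero in $(\bZ_p^\times)^{n+1}$. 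It therefore suffices to show $\bE_p(\delta)=1+O(p^{-2})$ for $p>p_0(d,n)$ and $\bE_p(\delta)=O_\delta(1)$ for $p\le p_0$, uniformly in $w$.

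For a fixed $p>p_0$ I would split according to whether $\sigma'(\ba;p^{a_p})\ge\frac12$. Following the Lang--Weil argument from the proof of Theorem \ref{positivelocaldensity}, for all $\ba$ outside a set of density $O(p^{-2})$ — those for which $f_\ba$ is absolutely irreducible mod $p$, reducibility being codimension $\ge2$ by \cite[Theorem 3.6]{pv2004} — the variety $f_\ba=0$ has $p^n+O(p^{n-1/2})$ smooth points in $(\bF_p^\times)^{n+1}$, each of which Hensel-lifts, giving $\sigma'(\ba;p^{a_p})=1+O(p^{-1/2})\ge\frac12$. I also have the exact identity $\bE_\ba[\sigma'(\ba;p^{a_p})]=1$ (for each unit $\bb$ the form $\ba\mapsto f_\ba(\bb)$ is onto $\bZ/p^{a_p}\bZ$, since $\nu_{d,n}(\bb)$ has unit entries), and — crucially using $n\ge3$ — the second moment bound $\bE_\ba[(\sigma'(\ba;p^{a_p})-1)^2]\ll p^{1-n}\le p^{-2}$, obtained by expanding the square and observing that a pair of distinct units $(\bb,\bb')$ contributes the ``independent'' value $p^{-2a_p}$ unless $\bb\equiv c\bb'\pmod p$ for a scalar $c$, the exceptional pairs contributing $O(p^{1-n})$ in total (for $n=2$ this is only $O(p^{-1})$, which is why that case is excluded). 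Plugging these into $x^{-\delta}\le1-\delta(x-1)+C_\delta(x-1)^2$ on $[\frac12,\infty)$ yields $\bE_\ba[\sigma'(\ba;p^{a_p})^{-\delta}\mathbf 1_{\sigma'\ge1/2}]=1+O(p^{-2})$. On the remaining event $\{0<\sigma'(\ba;p^{a_p})<\frac12\}$ I would stratify by the least $\ell$ for which $f_\ba$ has a zero in $(\bZ_p^\times)^{n+1}$ that is nonsingular modulo $p^\ell$: when such an $\ell$ exists one has $\sigma'(\ba;p^{a_p})\gg p^{-n\ell}$, while the locus with no such zero up to level $\ell$ is cut out by congruences modulo $p^\ell$ expressing that $f_\ba$ is singular to order $\ge\ell$ along its unit zero set, of density $O(p^{-c\ell})$ for a fixed $c\ge2$; hence this part contributes $\ll\sum_{\ell\ge1}p^{n\ell\delta-c\ell}\ll p^{n\delta-c}=O(p^{-2})$ since $\delta=\frac1{4n}$ is small. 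Combining gives $\bE_p(\delta)=1+O(p^{-2})$ for $p>p_0$, while the same stratification gives $\bE_p(\delta)\ll\sum_{\ell\ge1}p^{(n\ell-1)\delta}\,O_p(p^{-c\ell})\ll_\delta1$ for each of the finitely many $p\le p_0$; then $\prod_{p>p_0}(1+O(p^{-2}))\prod_{p\le p_0}O_\delta(1)\ll_\delta1$, as needed.

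The hard part is establishing the two geometric inputs used above, closely paralleling the treatment of the non-Archimedean factor in \cite[Section 5]{bbs2023} but now with the $\bZ_p^\times$ (unit) restriction in place: the second-moment estimate $\bE_\ba[(\sigma'(\ba;p^{a_p})-1)^2]\ll p^{1-n}$, where the delicate point is transporting the $\bF_p$-level count up to level $p^{a_p}$ while tracking the unit condition, and which is summable in $p$ precisely because $n\ge3$; and the density bound $O(p^{-c\ell})$ with $c\ge2$ for the locus of $p$-adically ``very singular'' $f_\ba$, which controls the left tail of $\sigma'(\ba;p^{a_p})$ among $p$-locally soluble $\ba$ — without a power saving in $\ell$ here, the crude bound $\sigma'(\ba;p^{a_p})\gg(pw)^{-O(1)}$ would contribute a factor growing with $B$ and ruin the estimate. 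Everything else is bookkeeping with the Euler-product factorization and Markov's inequality.
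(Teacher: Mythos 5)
Your route --- proving the uniform negative-moment bound $\frac{1}{\#\bV_{d,n}^{\mathrm{ploc}}(A)}\sum_V \fS'_V(B)^{-1/(4n)}\ll 1$ and applying Markov --- is genuinely different from the paper's, which instead follows the exceptional-set decomposition of \cite[Proposition 5.2]{bbs2023} verbatim: there one splits the primes $p\le w$ at a threshold depending on $\xi$, controls the large primes by Chebyshev using the variance bound, and union-bounds over the small primes using the stratification by the $p$-adic valuation $e$ of the gradient at a unit zero. The four local inputs you isolate (mean $\approx 1$, variance $\ll p^{1-n}$, the lower bound $\sigma'\gg p^{-n(e+1)}$ on the stratum of valuation $e$, and an upper bound on the density of that stratum) are exactly the paper's Lemmas \ref{sigmamean}, \ref{sigmavar}, \ref{sigmaboundinhatR} and equation \eqref{hatRebound}, so you have correctly identified everything that has to be proved; the moment formulation is arguably cleaner and delivers the same exponent $1/(16n)$.

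The one step that does not hold as stated is the density bound $O(p^{-c\ell})$ with $c\ge 2$ for the locus of $\ba$ having no unit zero that is nonsingular modulo $p^{\ell}$, uniformly in $\ell$. What is actually available (and what the paper proves, via \eqref{hatRebound}) is density $\ll p^{-e}$ for the stratum of minimal gradient valuation $e$ --- i.e.\ $c=1$ --- together with the separate fact that the whole locus $e\ge 1$ has density $O(p^{-2})$ for large $p$ (Lang--Weil plus the codimension-$2$ bound on the reducible locus). A $p^{-2\ell}$ saving at \emph{every} level $\ell$ is more than the geometry gives in general, and with $c=1$ alone your bad-part sum is $\sum_{e\ge1}p^{(e+1)/4-e}\asymp p^{-1/2}$, whose product over $p$ diverges, so the over-claim is doing real work. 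Fortunately the argument is repaired by combining the two true bounds: the term $e=1$ contributes $p^{1/2}\cdot O(p^{-2})=O(p^{-3/2})$, and the terms $e\ge2$ contribute
\[\sum_{e\ge2}p^{(e+1)/4}\min\{p^{-2},p^{-e}\}\ll p^{-5/4},\]
so $\bE_p(\delta)=1+O(p^{-5/4})$, which is still summable and the Euler product still converges. Two smaller points: the mean is $1+O(p^{1-N_{d,n}})$ rather than exactly $1$ (the average is taken over \emph{primitive} residues), and your proportionality criterion $\bb\equiv c\bb'\pmod p$ in the variance computation must be refined to proportionality modulo $p^{e}$ for each $e\le r$ (this is the $\gcd(\mathcal{G}(\bb_1,\bb_2),p^{r})$ stratification in Lemma \ref{sigmavar}); neither affects the outcome.
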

\begin{lemma}\label{archnotsmall}
Let $d\ge 2$ and $n\ge 3$. Let $\psi,\beta:\bR_{>0}\rightarrow\bR_{>1}$ be such that $\psi(A)\le A$, and $\beta(A)\le (\log A)^{(3n+1)/2}$. Then for all $C,\eps>0$, we have
\[\frac{1}{\#\bV_{d,n}^{\mathrm{ploc}}(A)} \#\left\{V\in\bV_{d,n}^{\mathrm{ploc}}(A):\fJ'_{V}(A\psi(A))<\frac{C}{\beta(A)}\right\}\ll_{C,\eps} \frac{1}{\beta(A)^{2/(3n+1)-\eps}}.\]
\begin{proof}[Proof of Theorem \ref{localcountnotsmall}]
Define
\[\hat{\mathscr{Q}}_{\xi}=\frac{1}{\#\bV_{d,n}^{\mathrm{ploc}}(A)}\cdot\#\left\{V\in\bV_{d,n}^{\mathrm{ploc}}(A):N^{\mathrm{ploc}}_{V}(A\psi(A))\le \frac{\psi(A)}{\xi(A)^{1/2}}\right\}.\]
By Lemma \ref{localcountlowerbound} with $C>(9n+1)/2$, $B=A\psi(A)$, and the assumption $\xi(A)\le(\log A)^{9n+1}$, we have a constant $c>0$ depending only on $d,n$ such that
\[\hat{\mathscr{Q}}_{\xi}\le \frac{1}{\#\bV_{d,n}^{\mathrm{ploc}}(A)}\cdot\#\left\{V\in\bV_{d,n}^{\mathrm{ploc}}(A): \fS'_{V}(A\psi(A))\cdot \fI'_{V}(A\psi(A))\le \frac{c}{\xi(A)^{1/2}}\right\}.\]
Let $0<u<1/2$ be a fixed parameter to be chosen later and $v=1/2-u$. Let
\begin{align*}\mathcal{A}_{1}&=\left\{V\in\bV_{d,n}^{\mathrm{ploc}}(A): \fS'_{V}(A\psi(A))\le \frac{c^{1/2}}{\xi(A)^{u}}\right\},\\
\mathcal{A}_{2}&=\left\{V\in\bV_{d,n}^{\mathrm{ploc}}(A): \fI'_{V}(A\psi(A))\le \frac{c^{1/2}}{\xi(A)^{v}}\right\}.
\end{align*}
If $\fS'_{V}(A\psi(A))\cdot \fI'_{V}(A\psi(A))\le \frac{c}{\xi(A)^{1/2}}$ then $V\in \mathcal{A}_{1}$ or $V\in \mathcal{A}_{2}$, so applying Lemma \ref{nonarchnotsmall} with $\beta(A)=\xi(A)^{u}$ and Lemma \ref{archnotsmall} with $\beta(A)=\xi(A)^{v}$ (assuming $\xi(A)^{v}\le (\log A)^{(3n+1)/2}$), we get that,
\[\hat{\mathscr{Q}}_{\xi}\ll \frac{1}{\xi(A)^{u/(3n)-\eps}} + \frac{1}{\xi(A)^{2v/(3n+1)-\eps}}.\]
We now choose $u=3n/(9n+1)$ and find that, since $\xi(A)\le (\log A)^{9n+1}$, the inequality $\xi(A)^{v}\le (\log A)^{(3n+1)/2}$ is satisfied. We conclude that
\[\hat{\mathscr{Q}}_{\xi}\ll \frac{1}{\xi(A)^{1/(9n+1)-\eps}}.\]
\end{proof}
\end{lemma}
\subsection{The non-Archimedean factor is rarely small.}
The purpose of this subsection will be to prove Lemma \ref{nonarchnotsmall}.

For integers $N,Q\ge 1$, let $\fR_{N}$ be defined, as in \cite{bbs2023}, by
\[\fR_{N}(Q)=\left\{\bb\in(\bZ/Q\bZ)^{N}:\gcd(Q,\bb)=1\right\}.\]
Then note that, for a prime $p$ and integer $r\ge 1$,
\[\#\fR_{N}(p^{r})=p^{rN}\left(1-\frac{1}{p^{N}}\right),\]
and
\[\#\fR'_{N}(p^{r})=\varphi(p^{r})^{N}=p^{rN}\left(1-\frac{1}{p}\right)^{N}.\]
\begin{lemma}\label{sigmamean}
Let $d\ge 2$ and $n\ge 3$. Then for all primes $p$, and integers $r\ge 1$, we have
\[\frac{1}{\#\fR_{N_{d,n}}(p^{r})}\sum_{\ba\in\fR_{N_{d,n}}(p^{r})}\sigma'(\ba;p^{r})=1+O\left(\frac{1}{p^{N_{d,n}-1}}\right).\]
\begin{proof}
By changing the order of summation we have
\[\sum_{\ba\in\fR_{N_{d,n}}(p^{r})}\sigma'\left(\ba;p^{r}\right)=\frac{p^{r}}{\varphi(p^{r})^{n+1}}\sum_{\bb\in\fR'_{n+1}(p^{r})}\#\left\{\ba\in\fR_{N_{d,n}}(p^{r}):\ba\in\Lambda_{\nu_{d,n}(\bb)}^{(p^{r})}\right\}.\]
Then we have, as in the proof of \cite[Lemma 5.4]{bbs2023} that
\[\#\left\{\ba\in\fR_{N_{d,n}}(p^{r}):\ba\in\Lambda_{\nu_{d,n}(\bb)}^{(p^{r})}\right\}=p^{r(N_{d,n}-1)}\left(1-\frac{1}{p^{N_{d,n}-1}}\right).\]
Now
\begin{align*}\frac{1}{\#\fR_{N_{d,n}}(p^{r})}\sum_{\ba\in\fR_{N_{d,n}}(p^{r})}\sigma'(\ba;p^{r})&=\frac{\#\fR'_{n+1}(p^{r})}{\varphi(p^{r})^{n+1}}\frac{p^{rN_{d,n}}}{\#\fR_{N_{d,n}}(p^{r})}\left(1-\frac{1}{p^{N_{d,n}-1}}\right)\\&=\left(1-\frac{1}{p^{N_{d,n}}}\right)^{-1}\left(1-\frac{1}{p^{N_{d,n}-1}}\right).\end{align*}
\end{proof}
\end{lemma}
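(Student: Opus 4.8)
The plan is to interchange the order of summation and reduce the whole statement to a single clean congruence count. Since $\sigma'(\ba;p^r)$ is itself defined as a sum over $\bb\in\fR'_{n+1}(p^r)$, I would first write
\[\sum_{\ba\in\fR_{N_{d,n}}(p^r)}\sigma'(\ba;p^r)=\frac{p^r}{\varphi(p^r)^{n+1}}\sum_{\bb\in\fR'_{n+1}(p^r)}\#\bigl\{\ba\in\fR_{N_{d,n}}(p^r):\ba\in\Lambda_{\nu_{d,n}(\bb)}^{(p^r)}\bigr\},\]
so the task becomes: for a fixed $\bb$ all of whose coordinates are invertible mod $p^r$, count the vectors $\ba$ that are primitive mod $p$ and satisfy the single linear congruence $\langle\nu_{d,n}(\bb),\ba\rangle\equiv 0\pmod{p^r}$.

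The key observation is that $\nu_{d,n}(\bb)$ has at least one coordinate that is a unit mod $p^r$ — for instance the monomial $b_0^d$, a $d$-th power of a unit — so the congruence $\langle\nu_{d,n}(\bb),\ba\rangle\equiv 0\pmod{p^r}$ determines the corresponding coordinate of $\ba$ uniquely once the other $N_{d,n}-1$ coordinates are chosen freely in $\bZ/p^r\bZ$; this gives $p^{r(N_{d,n}-1)}$ solutions in $(\bZ/p^r\bZ)^{N_{d,n}}$. Removing the imprimitive ones by writing $\ba=p\ba'$, these correspond to solutions of the same type modulo $p^{r-1}$, of which there are $p^{(r-1)(N_{d,n}-1)}$, so
\[\#\bigl\{\ba\in\fR_{N_{d,n}}(p^r):\ba\in\Lambda_{\nu_{d,n}(\bb)}^{(p^r)}\bigr\}=p^{r(N_{d,n}-1)}\Bigl(1-\frac{1}{p^{N_{d,n}-1}}\Bigr),\]
which is exactly the count established (for the analogous purpose) in \cite[Lemma 5.4]{bbs2023}, and crucially is independent of $\bb$.

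Substituting this back, the sum over $\bb$ merely multiplies by $\#\fR'_{n+1}(p^r)=\varphi(p^r)^{n+1}$, which cancels the denominator, leaving
\[\sum_{\ba\in\fR_{N_{d,n}}(p^r)}\sigma'(\ba;p^r)=p^{rN_{d,n}}\Bigl(1-\frac{1}{p^{N_{d,n}-1}}\Bigr).\]
Dividing by $\#\fR_{N_{d,n}}(p^r)=p^{rN_{d,n}}(1-p^{-N_{d,n}})$ gives the exact value $(1-p^{-(N_{d,n}-1)})/(1-p^{-N_{d,n}})$, and expanding the geometric series (valid since $N_{d,n}\ge 2$, which holds as $n\ge 3$, $d\ge 2$) yields $1+O(p^{-(N_{d,n}-1)})$, as claimed. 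There is no serious obstacle here; the only point needing care is the fixed-$\bb$ count, specifically recognizing that restricting to units (rather than merely primitive vectors) forces $\nu_{d,n}(\bb)$ to have a unit entry, so the linear congruence is nondegenerate and the count is both exact and uniform in $\bb$ — everything else is bookkeeping with $\#\fR_N$ and $\#\fR'_N$.
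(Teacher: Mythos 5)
Your proposal is correct and follows essentially the same route as the paper: interchange the order of summation, establish the fixed-$\bb$ count $p^{r(N_{d,n}-1)}(1-p^{-(N_{d,n}-1)})$, and then cancel $\#\fR'_{n+1}(p^r)=\varphi(p^r)^{n+1}$ against the denominator before dividing by $\#\fR_{N_{d,n}}(p^r)$. The only difference is cosmetic: where the paper simply cites \cite[Lemma 5.4]{bbs2023} for the inner count, you prove it directly via the unit coordinate $b_0^d$ of $\nu_{d,n}(\bb)$ and the inclusion--exclusion over imprimitive vectors $\ba=p\ba'$, which is a valid and complete justification.
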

\begin{lemma}\label{sigmavar}
Let $d\ge 2$, $n\ge 3$. Also, let $p$ be a prime number and $r\ge 1$, then
\[\frac{1}{\#\fR_{N_{d,n}}(p^{r})}\sum_{\ba\in\fR_{N_{d,n}}(p^{r})}(\sigma'(\ba;p^{r})-1)^{2}\ll \frac{1}{p^{n-1}}.\]
\begin{proof}
By a similar method to the proof of \cite[Lemma 5.5]{bbs2023}, we have
\[\sum_{\ba\in\fR_{N_{d,n}}(p^{r})}\sigma'(\ba;p^{r})^{2}=\frac{p^{2r}}{\varphi(p^{r})^{2(n+1)}}\sum_{\bb_{1},\bb_{2}\in\fR'_{n+1}(p^{r})}\#L(\bb_{1},\bb_{2};p^{r}),\]
where
\[L(\bb_{1},\bb_{2};p^{r})=\left\{\ba\in\fR_{N_{d,n}}(p^{r}):\ba\in\Lambda_{\nu_{d,n}(\bb_{1})}^{(p^{r})}\cap\Lambda_{\nu_{d,n}(\bb_{2})}^{(p^{r})}\right\}.\]
Now \cite[(5.13)]{bbs2023} says that:
\[\#L(\bb_{1},\bb_{2};p^{r})=p^{r(N_{d,n}-2)}\mathrm{gcd}(\mathcal{G}(\bb_{1},\bb_{2}),p^{r})\left(1+O\left(\frac{1}{p^{N_{d,n}-2}}\right)\right),\]
where we recall that $\mathcal{G}(\bb_{1},\bb_{2})$ is the gcd of the $2\times 2$ minors of the matrix with columns $\bb_{1},\bb_{2}$. There is a slight issue that $\bb_{1}$, $\bb_{2}$ are only defined (mod $p^{r}$), but if we choose representatives for these congruence classes then, although $\mathcal{G}(\bb_{1},\bb_{2})$ may depend on this choice, $\gcd(\mathcal{G}(\bb_{1},\bb_{2}),p^{r})$ does not.

Observe that
\[\sum_{\bb_{1},\bb_{2}\in\fR'_{n+1}(p^{r})}\mathrm{gcd}(\mathcal{G}(\bb_{1},\bb_{2}),p^{r})=\#\fR'_{n+1}(p^{r})^{2}+O\left(\sum_{e=1}^{r}p^{e}\cdot\#\hat{\fF}^{(e)}(p^{r})\right),\]
where, for $e\in\{1,\dots,r\}$,
\[\hat{\fF}^{(e)}(p^{r})=\{(\bb_{1},\bb_{2})\in\fR'_{n+1}(p^{r})^{2}:\mathrm{gcd}(\mathcal{G}(\bb_{1},\bb_{2}),p^{r})=p^{e}\}.\]
For given $\bb_{1}\in\fR'_{n+1}(p^{r})$, there are at most $\varphi(p^{r})p^{(r-e)n}$ choices of $\bb_{2}$ such that $(\bb_{1},\bb_{2})\in\hat{\fF}^{(e)}(p^{r})$, and so
\[\#\hat{\fF}^{(e)}(p^{r})\ll \varphi(p^{r})^{n+2}p^{n(r-e)}.\]
Therefore
\[\sum_{\bb_{1},\bb_{2}\in\fR'_{n+1}(p^{r})}\mathrm{gcd}(\mathcal{G}(\bb_{1},\bb_{2}),p^{r})=\varphi(p^{r})^{2(n+1)}\left(1+O\left(\frac{1}{p^{n-1}}\right)\right).\]
Combining all the above equations, we get
\[\sum_{\ba\in\fR_{N_{d,n}}(p^{r})}\sigma'(\ba;p^{r})^{2}=p^{rN_{d,n}}\left(1+O\left(\frac{1}{p^{n-1}}\right)\right),\]
and
\[\frac{1}{\#\fR_{N_{d,n}}(p^{r})}\sum_{\ba\in\fR_{N_{d,n}}(p^{r})}\sigma'(\ba;p^{r})^{2}=1+O\left(\frac{1}{p^{n-1}}\right).\]
The result then follows by applying Lemma \ref{sigmamean}.
\end{proof}
\end{lemma}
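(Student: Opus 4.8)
The plan is to expand the square and reduce the problem to a second-moment estimate for $\sigma'(\cdot\,;p^{r})$, running parallel to the proof of \cite[Lemma 5.5]{bbs2023} but with the restricted tuple sets $\fR'_{n+1}(p^{r})$ in place of the unrestricted ones used there. Expanding $(\sigma'(\ba;p^{r})-1)^{2}=\sigma'(\ba;p^{r})^{2}-2\sigma'(\ba;p^{r})+1$ and averaging over $\ba\in\fR_{N_{d,n}}(p^{r})$, the linear term is controlled by Lemma \ref{sigmamean}, which gives $\frac{1}{\#\fR_{N_{d,n}}(p^{r})}\sum_{\ba}\sigma'(\ba;p^{r})=1+O(p^{-(N_{d,n}-1)})$. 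Since $N_{d,n}\ge n+1$, it therefore suffices to establish the matching estimate $\frac{1}{\#\fR_{N_{d,n}}(p^{r})}\sum_{\ba}\sigma'(\ba;p^{r})^{2}=1+O(p^{-(n-1)})$; this, together with the bound on the linear term, gives $\frac{1}{\#\fR_{N_{d,n}}(p^{r})}\sum_{\ba}(\sigma'(\ba;p^{r})-1)^{2}=O(p^{-(n-1)})$, as required.

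For the second moment, I would first open $\sigma'(\ba;p^{r})^{2}$ into a double sum over $\bb_{1},\bb_{2}\in\fR'_{n+1}(p^{r})$ of the indicator that $\ba\in\Lambda_{\nu_{d,n}(\bb_{1})}^{(p^{r})}\cap\Lambda_{\nu_{d,n}(\bb_{2})}^{(p^{r})}$, interchange the order of summation, and invoke \cite[(5.13)]{bbs2023}, which evaluates the resulting inner count of $\ba$ as $p^{r(N_{d,n}-2)}\gcd(\mathcal{G}(\bb_{1},\bb_{2}),p^{r})(1+O(p^{-(N_{d,n}-2)}))$. Since $\#\fR_{N_{d,n}}(p^{r})=p^{rN_{d,n}}(1-p^{-N_{d,n}})$, this reduces everything to estimating the sum $\Sigma:=\sum_{\bb_{1},\bb_{2}\in\fR'_{n+1}(p^{r})}\gcd(\mathcal{G}(\bb_{1},\bb_{2}),p^{r})$, whose main term $\#\fR'_{n+1}(p^{r})^{2}=\varphi(p^{r})^{2(n+1)}$ comes from the pairs with $p\nmid\mathcal{G}(\bb_{1},\bb_{2})$.

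The crux is bounding the contribution to $\Sigma$ of the pairs with $p\mid\mathcal{G}(\bb_{1},\bb_{2})$. I would stratify according to the exponent $e\ge 1$ with $\gcd(\mathcal{G}(\bb_{1},\bb_{2}),p^{r})=p^{e}$: fixing $\bb_{1}$, which is coprime to $p$ and hence has a unit coordinate $b_{1,i}$, the requirement that every $2\times 2$ minor of the matrix $[\,\bb_{1}\mid\bb_{2}\,]$ vanish modulo $p^{e}$ forces $b_{2,j}\equiv b_{1,i}^{-1}b_{1,j}b_{2,i}\pmod{p^{e}}$ for each $j\ne i$, so $\bb_{2}$ is determined modulo $p^{e}$ once $b_{2,i}$ is chosen; this yields $\ll\varphi(p^{r})\,p^{n(r-e)}$ admissible $\bb_{2}$, hence the stratum has size $\ll\varphi(p^{r})^{n+2}p^{n(r-e)}$ and contributes $\ll p^{e}\varphi(p^{r})^{n+2}p^{n(r-e)}\asymp p^{2r(n+1)}p^{-(n-1)e}$ to $\Sigma$. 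Summing the geometric series over $e\ge 1$ (which converges since $n\ge 3$) gives $\Sigma=\varphi(p^{r})^{2(n+1)}(1+O(p^{-(n-1)}))$. Feeding this back into the previous step, and absorbing the $O(p^{-(N_{d,n}-2)})$ error using $N_{d,n}-2\ge n-1$, produces the second-moment estimate and hence the lemma.

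I expect the stratified minor count to be the only real point of difficulty: one must argue carefully that imposing $\mathcal{G}(\bb_{1},\bb_{2})\equiv 0\pmod{p^{e}}$ genuinely pins $\bb_{2}$ down to $\asymp p^{n(r-e)}$ residue classes — this is exactly where having $\bb_{1}$ coprime to $p$ helps, so the restriction to $\fR'_{n+1}(p^{r})$ is an aid rather than an obstacle — and that the resulting per-stratum saving $p^{-(n-1)e}$ is what one needs. The rest (reindexing the double sum, tracking the error terms, and comparing powers of $p$) is routine bookkeeping, essentially identical to \cite{bbs2023}.
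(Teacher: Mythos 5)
Your proposal is correct and follows essentially the same route as the paper: expand the square, control the linear term via Lemma \ref{sigmamean}, open the second moment into a double sum over $\bb_{1},\bb_{2}\in\fR'_{n+1}(p^{r})$, apply \cite[(5.13)]{bbs2023}, and stratify the gcd sum by the exponent $e$, with each stratum of size $\ll\varphi(p^{r})^{n+2}p^{n(r-e)}$ contributing a relative saving of $p^{-(n-1)e}$. Your explicit justification of the stratum count (using that every coordinate of $\bb_{1}$ is a unit, so the vanishing of the minors mod $p^{e}$ pins $\bb_{2}$ down modulo $p^{e}$ up to the choice of one coordinate) is exactly the argument the paper asserts without detail.
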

For $e\in\{0,\dots,r\}$, let
\[\hat{\fR}_{N_{d,n}}^{(e)}(p^{r})=\left\{\ba\in\fR_{N_{d,n}}(p^{r}):\exists\bx\in\fR'_{n+1}(p^{r})\:\begin{array}{l l}f_{\ba}(\bx)\equiv 0 \:\text{(mod $p^{r}$)}\\v_{p}(\nabla f_{\ba}(\bx))=e\end{array}\right\},\]
where, for $\bz=(z_{1},\dots,z_{N})\in(\bZ/p^{r}\bZ)^{N}$, $v_{p}(\bz)$ is the largest integer $e\in\{0,\dots,r\}$ such that $p^{e}\mid z_{j}$ for all $j$.

Note that $\hat{\fR}^{(e)}_{N_{d,n}}(p^{r})\subset \fR_{N_{d,n}}^{(e)}(p^{r})$, where
\[\fR_{N_{d,n}}^{(e)}(p^{r})=\left\{\ba\in\fR_{N_{d,n}}(p^{r}):\exists\bx\in\fR_{n+1}(p^{r})\:\begin{array}{l l}f_{\ba}(\bx)\equiv 0 \:\text{(mod $p^{r}$)}\\v_{p}(\nabla f_{\ba}(\bx))=e\end{array}\right\}.\]
An upper bound for $\#\fR_{N_{d,n}}^{(e)}(p^{r})$ was shown in \cite[Lemma 5.6]{bbs2023}, from which it immediately follows that, for all $d\ge 2$ and $n\ge 3$:
\begin{equation}\label{hatRebound}\#\hat{\fR}_{N_{d,n}}^{(e)}(p^{r})\le 2p^{rN_{d,n}-e}.\end{equation}
\begin{lemma}\label{sigmaboundinhatR}
Let $d\ge 2$, $n\ge 3$ and $\ba\in\hat{\fR}_{N_{d,n}}^{(e)}(p^{r})$. Then
\[\sigma'(\ba;p^{r})\ge \frac{1}{p^{(e+1)n}}.\]
\begin{proof}
When $e=r$, we have at least one solution $\bx$ of $f_{\ba}(\bx)\equiv 0$ (mod $p^{r}$). Then $\ba\in\Lambda_{\nu_{d,n}(\bx)}^{(p^{r})}$, so $\sigma'(\ba;p^{r})\ge p^{r}/\varphi(p^{r})^{n+1}$ which is sufficient. Suppose now that $e<r$. Let $\bx\in \fR'_{n+1}(p^{r})$ be a solution of $f_{\ba}(\bx)\equiv 0$ (mod $p^{r}$) and as in the proof of \cite[Lemma 5.7]{bbs2023}, let
\[\fC_{\ba,\bx}(p^{c};p^{r})=\left\{\bb\in\fR_{n+1}(p^{r}):\begin{array}{l l}\bb\equiv\bx\:\text{(mod $p^{c}$)}\\f_{\ba}(\bb)\equiv 0\:\text{(mod $p^{r}$)}\end{array}\right\}.\]
Note that if $c\ge 1$ and $\bb\equiv \bx$ (mod $p^{c}$), then $\bb\in \fR'_{n+1}(p^{r})$, so in fact:
\[\sigma'(\ba;p^{r})\ge \frac{p^{r}\#\fC_{\ba,\bx}(p^{e+1};p^{r})}{\varphi(p^{r})^{n+1}}\ge \frac{\#\fC_{\ba,\bx}(p^{e+1};p^{r})}{p^{rn}}.\]
The result then follows by the same proof as \cite[Lemma 5.7]{bbs2023}.
\end{proof}
\end{lemma}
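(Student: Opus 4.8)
\emph{Proof proposal.} Write $M=\#\{\bb\in\fR'_{n+1}(p^r):f_{\ba}(\bb)\equiv 0\pmod{p^r}\}$; since $\ba\in\Lambda_{\nu_{d,n}(\bb)}^{(p^r)}$ is just the assertion $f_{\ba}(\bb)\equiv 0\pmod{p^r}$, and $\varphi(p^r)<p^r$, we have $\sigma'(\ba;p^r)=p^r\varphi(p^r)^{-(n+1)}M\ge p^{-rn}M$. By hypothesis there is $\bx\in\fR'_{n+1}(p^r)$ with $f_{\ba}(\bx)\equiv 0\pmod{p^r}$ and $v_p(\nabla f_{\ba}(\bx))=e$. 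If $e=r$ then $M\ge 1$ and $\sigma'(\ba;p^r)\ge p^{-rn}\ge p^{-(e+1)n}$, so the plan is to assume $e<r$ and to prove $\#\fC_{\ba,\bx}(p^{e+1};p^r)\ge p^{(r-e-1)n}$. Since $e+1\ge 1$ and every coordinate of $\bx$ is a unit mod $p$, each $\bb\equiv\bx\pmod{p^{e+1}}$ automatically lies in $\fR'_{n+1}(p^r)$, so $M\ge\#\fC_{\ba,\bx}(p^{e+1};p^r)$; this is the one point where the argument differs from \cite[Lemma 5.7]{bbs2023}.

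To bound $\#\fC_{\ba,\bx}(p^{e+1};p^r)$ I would put $\bb=\bx+p^{e+1}\bc$ with $\bc$ ranging over $(\bZ/p^{r-e-1}\bZ)^{n+1}$ and Taylor expand the integer polynomial $f_{\ba}$ about $\bx$ (its Taylor coefficients are integers, so no $p$-divisible denominators appear even when $p=2$). Every term of degree $\ge 2$ in $\bc$ carries a factor $p^{2(e+1)}$, so, using $f_{\ba}(\bx)\equiv 0\pmod{p^r}$ and $\nabla f_{\ba}(\bx)=p^e\bv$ with $\bv$ having a unit coordinate $v_i$,
\[f_{\ba}(\bx+p^{e+1}\bc)\equiv p^{2e+1}\langle\bv,\bc\rangle+p^{2e+2}H(\bc)\pmod{p^r}\]
for some polynomial $H$ with integer coefficients. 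If $r\le 2e+1$ this vanishes for all $\bc$, giving $p^{(r-e-1)(n+1)}$ solutions. If $r\ge 2e+2$ the congruence is equivalent to $\Phi(\bc):=\langle\bv,\bc\rangle+pH(\bc)\equiv 0\pmod{p^{r-2e-1}}$, and since $\partial_i\Phi\equiv v_i\not\equiv 0\pmod p$, Hensel's lemma in the variable $c_i$ gives, for each of the $p^{(r-2e-1)n}$ choices of the remaining coordinates mod $p^{r-2e-1}$, a unique admissible $c_i$ mod $p^{r-2e-1}$; lifting $\bc$ back to modulus $p^{r-e-1}$ multiplies the count by $p^{e(n+1)}$. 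In both regimes $\#\fC_{\ba,\bx}(p^{e+1};p^r)\ge p^{(r-e-1)n}$, so $\sigma'(\ba;p^r)\ge p^{-rn}p^{(r-e-1)n}=p^{-(e+1)n}$.

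The only delicate part is the Hensel step: keeping the powers of $p$ straight through the multivariable Taylor expansion, treating separately the boundary regime $e+1\le r\le 2e+1$ in which the linear term already vanishes mod $p^r$, and verifying that solution counts multiply correctly as one passes between the moduli $p^{r-2e-1}$, $p^{r-e-1}$ and $p^r$. Everything else is the formal reduction of the first paragraph, so this is in essence \cite[Lemma 5.7]{bbs2023} together with the ``units'' observation made there.
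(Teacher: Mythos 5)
Your proposal is correct and follows essentially the same route as the paper: the separate treatment of $e=r$, the reduction $\sigma'(\ba;p^r)\ge p^{-rn}\#\fC_{\ba,\bx}(p^{e+1};p^r)$, and the key observation that $\bb\equiv\bx\pmod{p^{e+1}}$ with $\bx\in\fR'_{n+1}(p^r)$ forces $\bb\in\fR'_{n+1}(p^r)$ are exactly the paper's argument. The only difference is that you spell out the Taylor/Hensel count $\#\fC_{\ba,\bx}(p^{e+1};p^r)\ge p^{(r-e-1)n}$, which the paper simply imports from the proof of \cite[Lemma 5.7]{bbs2023}; your version of that count, including the boundary regime $r\le 2e+1$, checks out.
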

\begin{proof}[Proof of Lemma \ref{nonarchnotsmall}]
Define
\[\hat{\mathscr{F}}_{\xi}(A)=\frac{1}{\#\bV_{d,n}^{\mathrm{ploc}}(A)}\left\{V\in\bV_{d,n}^{\mathrm{ploc}}(A): \fS'_{V}(A\psi(A))< \frac{C}{\beta(A)}\right\}.\]
To begin with, we follow essentially the same argument as the proof of \cite[Proposition 5.2]{bbs2023}, but using Lemma \ref{sigmamean} instead of \cite[Lemma 5.4]{bbs2023}, Lemma \ref{sigmavar} instead of \cite[Lemma 5.5]{bbs2023}, equation \eqref{hatRebound} instead of \cite[Lemma 5.6]{bbs2023} and Lemma \ref{sigmaboundinhatR} instead of \cite[Lemma 5.7]{bbs2023}. We also use the lower bound $\bV_{d,n}^{\mathrm{ploc}}(A)\gg A^{N_{d,n}}$ from Theorem \ref{positivelocaldensity}.

We follow this argument up until the line preceding \cite[(5.27)]{bbs2023}, where the value of $\kappa$ is chosen to be $1/(4n)$. We instead make the choice $\kappa=1/(3n)-\eps$, and this leads us to a bound of
\[\hat{\mathscr{F}}_{\xi}(A) \ll \frac{1}{\beta(A)^{\kappa}} \prod_{p^{r} \| W} \left(1+O_{C}\left(\frac{1}{p^{1+\eps}}\right)\right)\ll_{C,\eps} \frac{1}{\beta(A)^{\kappa}},\]
which completes the proof.
\end{proof}
\begin{remark}
If we instead choose $\kappa=1/(3n)$ then we can show an upper bound of
\[\frac{(\log_{(3)} A)^{C'}}{\beta(A)^{1/(3n)}},\]
for some $C'>0$ depending on $C$. This is an improvement over Lemma \ref{nonarchnotsmall} when $\beta(A)$ is sufficiently large, but this does not lead to any improvements in Theorem \ref{localcountnotsmall}.
\end{remark}
\subsection{The Archimedean factor is rarely small.}
Let
\begin{equation}\label{defIprime}\bI_{d,n}^{\mathrm{ploc}}=\left\{\ba\in\bR^{N_{d,n}}\setminus\{0\}:\exists\bx\in\bS^{n}\cap(\bR^{+})^{n+1}\: f_{\ba}(\bx)=0\right\},\end{equation}
where
\[\bS^{N}=\left\{\bx\in\bR^{N+1}:\Vert\bx\Vert=1\right\}.\]
Also let
\[\mathscr{U}'_{d,n}(A)=\left\{\ba\in\bZ^{N_{d,n}}\cap\mathcal{B}_{N_{d,n}}(A)\cap\bI_{d,n}^{\mathrm{ploc}}:\mathcal{N}(\ba)\not\subset\bI_{d,n}^{\mathrm{ploc}}\right\},\]
where
\[\mathcal{N}(\ba)=\left\{\by\in\bR^{N_{d,n}}:\by-\ba\in\mathcal{B}_{N_{d,n}}(1)\right\}.\]
\begin{lemma}\label{mathscrUbound}
Let $d\ge 2$ and $n\ge 3$. Then
\[\#\mathscr{U}'_{d,n}(A)\ll A^{N_{d,n}-1}.\]
\begin{proof}
Let $\ba\in\mathscr{U}'_{d,n}(A)$ and $\bb\in\mathcal{N}(\ba)\setminus\bI_{d,n}^{\mathrm{ploc}}$. We define
\[M_{\ba}=\sup\left\{t\in[0,1]:\ba+t(\bb-\ba)\in\bI_{d,n}^{\mathrm{ploc}}\right\}\]
and $\bc=\ba+M_{\ba}(\bb-\ba)$. It follows from the Bolzano-Weierstrass theorem that there is at least one $\bx\in\bS^{n}\cap\bR_{\ge 0}^{n+1}$ such that $f_{\bc}(\bx)=0$. If $\bx\in(\bR^{+})^{n+1}$ for one of these $\bx$, then we may apply the method of the proof of \cite[Lemma 5.8]{bbs2023} to show that $\nabla f_{\bc}(\bx)=0$. In this case, we let $r=0$.

Otherwise, we have that for all solutions of $f_{\bc}(\bx)=0$, we have $x_{i}\le 0$ for some $i$. We may choose $\bx\in\bS^{n}\cap\bR_{\ge 0}^{n+1}$ such that $f_{\bc}(\bx)=0$ and suppose without loss of generality that for some $1\le r\le n$, we have $x_{i}=0$ for $0\le i<r$ and $x_{i}>0$ for $r\le i \le n$. Our aim is to show that, for $r\le i\le n$,
\begin{equation}\label{derivativezero}\partial_{i} f_{\bc}(\bx)=0.\end{equation}
Let $\bh=(h_{0},\dots,h_{n})\in\mathcal{B}_{n+1}(1)$ such that $\langle \bh,\nabla f_{\bc}(\bx)\rangle=0$. Then we claim that there exists $\delta>0$ (independent of $\bh$) such that for all $|t|\le \delta$, there exists $\by=\by(t)\in\bR^{n+1}$ such that $f_{\bc}(\by)=0$ and $\by(t) =\bx+t\bh+O(|t|^{2})$ (the implied constant here may depend on $\bc$). Assuming this claim, we suppose for a contradiction that $\partial_{i} f_{\bc}(\bx)\ne 0$ for some $i\ge r$. Then if we let $h_{0}=\cdots=h_{r-1}=\tau$, for some $\tau>0$, and $h_{j}=0$ for $j\ge r$, $j\ne i$, there will be some unique $h_{i}$ such that $\langle \bh,\nabla f_{\bc}(\bx)\rangle=0$. We may choose $\tau>0$ such that $\| \bh\|=1$. Then for $t>0$ sufficiently small, we have $y_{j}>0$ for all $j$, where $\by=(y_{0},\dots,y_{n})$ is as in the claim. So considering $\by/\Vert \by\Vert$, we get a contradiction with our assumption that $f_{\bc}$ has no solutions in $\bS^{n}\cap(\bR^{+})^{n+1}$. Hence \eqref{derivativezero} holds for all $r\le i\le n$.

To prove the claim, let $\bd=\nabla f_{\bc}(\bx)$. We have the Taylor expansion
\begin{align*}f_{\bc}(\bx+t\bh+s\bd)&=f_{\bc}(\bx)+\langle \bh,\bd\rangle t+\Vert \bd\Vert^{2}s+O\left(|t|^{2}+|s|^{2}\right)\\&=\Vert \bd\Vert^{2}s+O\left(|t|^{2}+|s|^{2}\right).\end{align*}
Now if we choose $c>0$ large enough, $\delta>0$ small enough (in terms of $c$) and $|t|\le \delta$, then when $s=c|t|^{2}$, we will have $f_{\bc}(\bx+t\bh+s\bd)>0$ and when $s=-c|t|^{2}$, we will have $f_{\bc}(\bx+t\bh+s\bd)<0$. Hence by the intermediate value theorem, there will be some $|s|\le c|t|^{2}$ such that $f_{\bc}(\bx+t\bh+s\bd)=0$. This proves the claim, and hence \eqref{derivativezero}.

To summarise, if we define, for $0\le r\le n$,
\[\mathscr{U}_{d,n}^{(r)}(A)=\left\{\ba\in\bZ^{N_{d,n}}\cap\mathcal{B}_{N_{d,n}}(A):\exists \bc\in\mathcal{N}(\ba)\:\exists \bx\in\bS^{n}\begin{array}{l l}x_{i}=0\;\text{for $0\le i< r$}\\\partial_{i} f_{\bc}(\bx)=0\:\text{for $i\ge r$}\end{array}\right\},\]
then we have shown that
\[\#\mathscr{U}'_{d,n}(A)\ll \sum_{r=0}^{n}\#\mathscr{U}_{d,n}^{(r)}.\]

Now, given $\ba\in\mathcal{B}_{N_{d,n}}(A)$, if there are $\bc\in\mathcal{N}(\ba)$ and $\bx\in\bS^{n}$ such that $x_{i}=0$ for $0\le i<r$ and $\partial_{i} f_{\bc}(\bx)=0$ for $i\ge r$, then for any $\by$ in the ball $\Vert \by-\bx\Vert\le 1/A$, we have $|y_{i}|\le 1/A$ for $0\le i< r$ and $|\partial_{i} f_{\ba}(\by)|\ll 1$ for $i\ge r$, because
\begin{align*}|\partial_{i}f_{\ba}(\by)|&\le |\partial_{i}f_{\ba-\bc}(\by)|+|\partial_{i}f_{\bc}(\by)-\partial_{i}f_{\bc}(\bx)|+|\partial_{i} f_{\bc}(\bx)|\\&\ll \Vert \ba-\bc\Vert\cdot\Vert \by\Vert^{d-1}+\Vert \bc\Vert\cdot\Vert \by-\bx\Vert\max\left\{\Vert\bx\Vert,\Vert\by\Vert\right\}^{d-2}.\end{align*}
So for all $\ba\in \mathscr{U}_{d,n}^{(r)}(A)$, there exists $\bx\in \bS^{n}$ such that the ball $\|\by-\bx\| \le 1/A$ is contained in the set
\[\left\{\by\in\bR^{n+1}:\begin{array}{l l}1-\frac{1}{A}\le \Vert \by\Vert\le 1+\frac{1}{A}\\|y_{i}|\le \frac{1}{A}\:\text{for $0\le i< r$}\\|\partial_{i}f_{\ba}(\by)|\ll 1\:\text{for $i\ge r$}\end{array}\right\}\]
and hence the volume of this set is at least $\gg 1/A^{n+1}$. We get
\begin{align*}\#\mathscr{U}_{d,n}^{(r)}(A)&\ll A^{n+1}\sum_{\ba\in\bZ^{N_{d,n}}\cap\mathcal{B}_{N_{d,n}}(A)}\mathrm{vol}\left(\left\{\by\in\bR^{n+1}:\begin{array}{l l}1-\frac{1}{A}\le \Vert \by\Vert\le 1+\frac{1}{A}\\|y_{i}|\le \frac{1}{A}\:\text{for $0\le i< r$}\\|\partial_{i}f_{\ba}(\by)|\ll 1\:\text{for $i\ge r$}\end{array}\right\}\right)\\&\ll A^{n+1}\int_{\mathcal{H}_{n+1}^{(r)}(A)}\#\left\{\ba\in\bZ^{N_{d,n}}\cap\mathcal{B}_{N_{d,n}}(A):\|\partial_{i}f_{\ba}(\by)\|\ll 1\:\text{for $i\ge r$}\right\}\d\by,\end{align*}
where
\[\mathcal{H}_{n+1}^{(r)}(A)=\left\{\by\in\bR^{n+1}:\begin{array}{l l}|y_{i}|\le \frac{1}{A}\:\text{for $0\le i< r$}\\1-\frac{1}{A}\le \Vert \by\Vert\le 1+\frac{1}{A}\end{array}\right\}.\]
Now, for each $\by\in\mathcal{H}_{n+1}^{(r)}(A)$, we have, by a similar argument to the last part of the proof of \cite[Lemma 5.8]{bbs2023} that
\[\#\left\{\ba\in\bZ^{N_{d,n}}\cap\mathcal{B}_{N_{d,n}}(A):|\partial_{i}f_{\ba}(\by)|\ll 1\:\text{for $i\ge r$}\right\}\ll A^{N_{d,n}-(n+1-r)}.\]
If we combine this with the fact that
\[\mathrm{vol}\left(\mathcal{H}_{n+1}^{(r)}(A)\right)\ll \frac{1}{A^{r+1}},\]
we get
\[\#\mathscr{U}_{d,n}^{(r)}(A)\ll A^{N_{d,n}-1},\]
which completes the proof.
\end{proof}
\end{lemma}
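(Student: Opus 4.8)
The plan is to show that every $\ba\in\mathscr{U}'_{d,n}(A)$ forces the existence of a nearby coefficient vector $\bc$ at which the hypersurface $f_{\bc}=0$ has a \emph{degenerate} real zero on the closed positive octant of the sphere, and then to count such $\ba$ by a geometry-of-numbers volume argument. First I would fix $\ba\in\mathscr{U}'_{d,n}(A)$ together with some $\bb\in\mathcal{N}(\ba)\setminus\bI_{d,n}^{\mathrm{ploc}}$, and travel along the segment from $\ba$ to $\bb$: set $M_{\ba}=\sup\{t\in[0,1]:\ba+t(\bb-\ba)\in\bI_{d,n}^{\mathrm{ploc}}\}$ and $\bc=\ba+M_{\ba}(\bb-\ba)$, which still lies within distance $1$ of $\ba$. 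By compactness of $\bS^{n}\cap\bR_{\ge 0}^{n+1}$ and continuity there is $\bx$ in this set with $f_{\bc}(\bx)=0$; after relabelling, $x_{i}=0$ for $i<r$ and $x_{i}>0$ for $i\ge r$, for some $0\le r\le n$.

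The heart of the argument is to show that $\partial_{i}f_{\bc}(\bx)=0$ for all $i\ge r$ (when $r=0$ this is $\nabla f_{\bc}(\bx)=0$, obtained as in \cite[Lemma 5.8]{bbs2023}). I would prove this by contradiction: using a two-parameter Taylor expansion of $f_{\bc}$ in the directions $\bh$ and $\bd=\nabla f_{\bc}(\bx)$ together with the intermediate value theorem, I would show that for any $\bh$ with $\langle\bh,\nabla f_{\bc}(\bx)\rangle=0$ there is a genuine zero $\by(t)=\bx+t\bh+O(|t|^{2})$ of $f_{\bc}$ for $|t|$ small. Choosing $\bh$ strictly positive in the first $r$ coordinates and with the remaining entries determined by orthogonality to $\bd$ -- which is possible precisely when some $\partial_{i}f_{\bc}(\bx)\ne 0$ with $i\ge r$ -- and normalizing $\by(t)$ to the sphere then exhibits points on $\bS^{n}\cap(\bR^{+})^{n+1}$ where $f$ takes both signs arbitrarily close to $\bc$, contradicting the definition of $M_{\ba}$.

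With this structural fact in hand I would introduce, for $0\le r\le n$, the sets $\mathscr{U}_{d,n}^{(r)}(A)$ of $\ba\in\bZ^{N_{d,n}}\cap\mathcal{B}_{N_{d,n}}(A)$ admitting some $\bc\in\mathcal{N}(\ba)$ and $\bx\in\bS^{n}$ with $f_{\bc}(\bx)=0$, $x_{i}=0$ for $i<r$, and $\partial_{i}f_{\bc}(\bx)=0$ for $i\ge r$; the above shows $\#\mathscr{U}'_{d,n}(A)\ll\sum_{r=0}^{n}\#\mathscr{U}_{d,n}^{(r)}(A)$. To bound each term I would note that, since $\|\ba-\bc\|\le 1$ and $\bx$ lies on the unit sphere, for every $\by$ with $\|\by-\bx\|\le 1/A$ one has $|y_{i}|\le 1/A$ for $i<r$ and $|\partial_{i}f_{\ba}(\by)|\ll 1$ for $i\ge r$, via the triangle-inequality estimate splitting $\partial_{i}f_{\ba}(\by)$ into $\partial_{i}f_{\ba-\bc}(\by)$, $\partial_{i}f_{\bc}(\by)-\partial_{i}f_{\bc}(\bx)$, and $\partial_{i}f_{\bc}(\bx)$. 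Integrating the indicator of $\mathscr{U}_{d,n}^{(r)}(A)$ against this ball of volume $\asymp A^{-(n+1)}$ gives
\[\#\mathscr{U}_{d,n}^{(r)}(A)\ll A^{n+1}\int_{\mathcal{H}_{n+1}^{(r)}(A)}\#\left\{\ba\in\bZ^{N_{d,n}}\cap\mathcal{B}_{N_{d,n}}(A):|\partial_{i}f_{\ba}(\by)|\ll 1\text{ for }i\ge r\right\}\d\by.\]
For fixed $\by$, the conditions $|\partial_{i}f_{\ba}(\by)|\ll 1$ are $n+1-r$ linear inequalities in $\ba$ (as $f_{\ba}$ is linear in $\ba$), which -- exactly as in the final part of \cite[Lemma 5.8]{bbs2023} -- confine $\ba$ to $\ll A^{N_{d,n}-(n+1-r)}$ lattice points; combined with $\mathrm{vol}(\mathcal{H}_{n+1}^{(r)}(A))\ll A^{-(r+1)}$ this yields $\#\mathscr{U}_{d,n}^{(r)}(A)\ll A^{N_{d,n}-1}$ uniformly in $r$, and summing over $r$ completes the proof.

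I expect the main obstacle to be the case analysis on $r$ together with the implicit-function-theorem-type claim producing zeros $\by(t)$ of $f_{\bc}$ near a boundary point: unlike the rational-points setting of \cite{bbs2023}, a point $\bc$ on $\partial\bI_{d,n}^{\mathrm{ploc}}$ need not have a singular zero in the open octant -- the real zero set can instead be ``escaping'' through the coordinate hyperplanes -- so one must simultaneously control the vanishing of the appropriate partial derivatives and the positivity of the perturbation $\bh$, which forces the somewhat delicate Taylor expansion above. The lattice-point count for fixed $\by$ and the volume bound on $\mathcal{H}_{n+1}^{(r)}(A)$ are routine.
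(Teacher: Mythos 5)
Your proposal is correct and follows essentially the same route as the paper: the same boundary point $\bc=\ba+M_{\ba}(\bb-\ba)$, the same Taylor-expansion/IVT argument forcing $\partial_{i}f_{\bc}(\bx)=0$ for $i\ge r$ at a zero with $x_{i}=0$ for $i<r$, and the same decomposition into the sets $\mathscr{U}_{d,n}^{(r)}(A)$ counted by integrating over $\mathcal{H}_{n+1}^{(r)}(A)$ with the lattice bound $\ll A^{N_{d,n}-(n+1-r)}$. The only cosmetic difference is that you phrase the contradiction via the definition of $M_{\ba}$, whereas the paper contradicts the case assumption that $f_{\bc}$ has no zero in the open octant; both framings are fine.
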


Let, for $d,n\ge 1$ integers and $\delta\ge 0$,
\begin{equation}\label{scrBdef}\mathscr{B}_{d,n,\delta}=\left\{\ba\in\mathcal{B}_{N_{d,n}}(1):\begin{array}{l l}\exists \bx\in\bS^{n}\cap(\bR_{\ge 0})^{n+1}\: f_{\ba}(\bx)=0\\\nexists \bx\in\bS^{n}\cap(\bR_{\ge 0})^{n+1}\: f_{\ba}(\bx)=0\:\text{and $x_{i}>\delta$ for all $i$}\end{array}\right\}.\end{equation}
\begin{lemma}\label{badvolbound}
We have, for all $d,n\ge 1$, $\delta\ge 0$, $\mathrm{vol}\left(\mathscr{B}_{d,n,\delta}\right)\ll \delta$.
\begin{proof}
The case $\delta=0$ follows from the $\delta>0$ case by letting $\delta\rightarrow 0$, so suppose $\delta>0$. Since trivially $\vol(\mathscr{B}_{d,n,\delta})\ll 1$, we may also suppose without loss of generality that $\delta<1/(2n+2)$.

Let $\ba\in\mathscr{B}_{d,n,\delta}$. The set of $\bx\in\bS^{n}$ such that $f_{\ba}(\bx)=0$ is compact, so it contains some point $\bx$ such that $\min_{0\le i\le n} x_{i}$ is maximised. Note that, by definition of $\mathscr{B}_{d,n,\delta}$, we have
\begin{equation}\label{minxi}\min_{0\le i\le n} x_{i}\le \delta.\end{equation}
Without loss of generality, assume that for some $r\ge 1$, $x_{i}\le \delta$ for $0\le i< r$ and $x_{i}>\delta$ for all $r\le i\le n$. Since $\delta<1/(n+1)$ and $\bx\in\bS^{n}$, we must have $x_{i}>\delta$ for some $i\le n$, so $r\le n$.

We claim that $\partial_{i}f_{\ba}(\bx)\ll \delta$ for all $i\ge r$. It will then follow that
\[\mathrm{vol}\left(\mathscr{B}_{d,n,\delta}\right)\ll \sum_{r=1}^{n}\mathrm{vol}\left(\mathscr{B}_{d,n,\delta}^{(r)}\right),\]
where
\[\mathscr{B}_{d,n,\delta}^{(r)}=\left\{\ba\in\mathcal{B}_{N_{d,n}}(1):\exists \bx\in\bS^{n}\cap(\bR_{\ge 0})^{n+1}\begin{array}{l l}f_{\ba}(\bx)=0\\ x_{i}\le \delta\:\text{for $0\le i<r$}\\|\partial_{i}f_{\ba}(\bx)|\ll \delta\:\text{for $r\le i \le n$}\end{array}\right\}.\]

To prove the claim, suppose that $\bx$ is as described. Let $\bd=\nabla f_{\ba}(\bx)$. As in the proof of Lemma \ref{mathscrUbound}, we have, for $\bh\in\mathcal{B}_{n+1}(1)$ such that $\langle \bh,\bd\rangle=0$ and sufficiently small $t\ge 0$, there exists $\by(t)\in\bR^{n+1}$ such that $f_{\ba}(\by)=0$ and
\[\by(t)=\bx+t\bh+O\left(t^{2}\right).\]
Let $L>0$ and suppose $|\partial_{i}f_{\ba}(\bx)|>L\delta$ for some $i\ge r$. Choose $h_{0}=\cdots=h_{r-1}=\delta$ and $h_{j}=0$ for $j\ne i$, $j\ge r$, and then choose $h_{i}$ such that $\langle \bh,\bd\rangle=0$. Because $\Vert\ba\Vert\le 1$, we have $\Vert \bd\Vert\ll 1$ and so $h_{i}\ll 1/L$ and so we can fix $L$ sufficiently large so that $\Vert \bh\Vert\le 1/2$. Now consider $\bz(t)=\by(t)/\Vert \by(t)\Vert$. If we can show that $\min_{j\le n}z_{j}>\min_{j\le n}x_{j}$ then we will have a contradiction, and so $|\partial_{i}f_{\ba}(\bx)|\ll \delta$. For $j<r$ we have, when $x_{j}\ne 0$:
\[z_{j}=\frac{x_{j}+\delta t+O\left(t^{2}\right)}{\Vert \by(t)\Vert}\ge \frac{x_{j}+\delta t+O\left(t^{2}\right)}{1+t/2+O\left(t^{2}\right)}=x_{j}\frac{1+(\delta/x_{j})t+O\left(t^{2}/x_{j}\right)}{1+t/2+O\left(t^{2}\right)}.\]
Since $\delta/x_{j}\ge 1$, we see that this is strictly greater than $x_{j}$ when $t>0$ is sufficiently small. When $x_{j}=0$, we have $z_{j}>0$ for sufficiently small $t$. For $j\ge r$, we have $x_{j}>\delta$ and so when $t$ is sufficiently small, $z_{j}>\delta\ge \min_{j\le n}x_{j}$ (recalling \eqref{minxi}). Hence $\min_{j\le n}z_{j}>\min_{j\le n}x_{j}$ as required. This proves the claim.

Now, for $\ba\in\mathscr{B}_{d,n,\delta}^{(r)}$, if $\bx\in\bS^{n}$ with $f_{\ba}(\bx)=0$, $|x_{i}|\le \delta$ for $i<r$ and $|\partial_{i}f_{\ba}(\bx)|\ll \delta$ for $i\ge r$, then for all $\by\in\bS^{n}$, if $\Vert \bx-\by\Vert\le \delta$ then $|f_{\ba}(\by)|\ll \delta$, $|y_{i}|\le 2\delta$ for $i<r$ and $|\partial_{i}f_{\ba}(\by)|\ll \delta$ for $i\ge r$. Hence, if we let
\[\mathcal{S}^{(r)}=\{\bx\in\bS^{n}:|x_{i}|\le 2\delta\:\text{for $i<r$}\},\]
then
\[\mathrm{vol}\left(\mathscr{B}_{d,n,\delta}^{(r)}\right)\ll \frac{1}{\delta^{n}}\int_{\mathcal{S}^{(r)}}\mathrm{vol}\left(\left\{\ba\in\mathcal{B}_{N_{d,n}}(1):\begin{array}{l l}|f_{\ba}(\bx)|\ll \delta\\|\partial_{i}f_{\ba}(\bx)|\ll \delta\:\text{for $i\ge r$}\end{array}\right\}\right)\d\bx,\]
because the volume in the integrand is at least $\delta^{n}$ whenever $\bx\in \mathscr{B}_{d,n,\delta}^{(r)}$.

Now we claim that
\[\mathrm{vol}\left(\left\{\ba\in\mathcal{B}_{N_{d,n}}(1):\begin{array}{l l}|f_{\ba}(\bx)|\ll \delta\\|\partial_{i}f_{\ba}(\bx)|\ll \delta\:\text{for $i\ge r$}\end{array}\right\}\right)\ll \delta^{n+1-r}.\]
To see this, recall that we supposed $\delta<1/(2n+2)$, so if $\bx\in\mathcal{S}^{(r)}$ then $x_{j}\ge 1/(n+1)$ for some $r\le j \le n$ and we may suppose without loss of generality that $j=r$. Write $\ba=(a_{0},\dots,a_{N_{d,n}-1})$. For $r\le i\le n$, let $c_{i}=a_{j_{i}}$ be the entry of $\ba$ corresponding to the monomial $x_{r}^{d-1}x_{i}$. By a similar argument to the last part of the proof of \cite[Lemma 5.8]{bbs2023}, if we fix a choice of all the $a_{j}$ where $j\notin \{j_{r},\dots,j_{n}\}$ then the conditions $|\partial_{i}f_{\ba}(\bx)|\ll \delta$ imply that each $c_{i}$ must lie in an interval of length $O(\delta)$, which proves the claim.

We also have,
\[\mathrm{vol}\left(\mathcal{S}^{(r)}\right)\ll \delta^{r},\]
where we take the $n$-dimensional volume as a subset of $\bS^{n}$.

Hence
\[\mathrm{vol}\left(\mathscr{B}_{d,n,\delta}^{(r)}\right)\ll \delta,\]
which completes the proof.
\end{proof}
\end{lemma}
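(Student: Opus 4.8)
The plan is to follow the structure already used for Lemma~\ref{mathscrUbound}: reduce the volume bound to a perturbation argument that pins down most partial derivatives of $f_\ba$, and then carry out a routine covering/counting argument. First I would dispose of $\delta=0$, which follows from the $\delta>0$ case by letting $\delta\to 0$ since $\mathscr{B}_{N_{d,n},0}\subseteq\mathscr{B}_{N_{d,n},\delta}$ for every $\delta>0$; so assume $0<\delta<1/(n+1)$. Given $\ba\in\mathscr{B}_{N_{d,n},\delta}$, the set $Z_\ba=\{\bx\in\bS^n\cap(\bR_{\ge 0})^{n+1}:f_\ba(\bx)=0\}$ is nonempty and compact, so it contains a point $\bx$ maximising $\min_i x_i$; the defining property of $\mathscr{B}_{N_{d,n},\delta}$ forces $\min_i x_i\le\delta$. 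After permuting coordinates we may assume $x_i\le\delta$ for $i<r$ and $x_i>\delta$ for $i\ge r$, where $1\le r\le n$ (we have $r\ge 1$ since $\min_i x_i\le\delta$, and $r\le n$ since otherwise $\|\bx\|^2\le(n+1)\delta^2<1$).

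The crucial step is the claim that $\partial_i f_\ba(\bx)\ll\delta$ for all $i\ge r$. Suppose not, and write $\bd=\nabla f_\ba(\bx)$, so $\|\bd\|\ll 1$ as $\|\ba\|\le 1$. Exactly as in the proof of Lemma~\ref{mathscrUbound}, for any $\bh\in\mathcal{B}_{n+1}(1)$ with $\langle\bh,\bd\rangle=0$ and sufficiently small $t\ge 0$ there is $\by(t)\in\bR^{n+1}$ with $f_\ba(\by(t))=0$ and $\by(t)=\bx+t\bh+O(t^2)$: expand $f_\ba(\bx+t\bh+s\bd)=\|\bd\|^2 s+O(t^2+s^2)$ and solve for $s=O(t^2)$ by the intermediate value theorem. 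Now if $|\partial_i f_\ba(\bx)|>L\delta$ for some $i\ge r$, choose $h_0=\dots=h_{r-1}=\delta$, $h_j=0$ for $j\ge r$ with $j\ne i$, and fix $h_i$ by $\langle\bh,\bd\rangle=0$; then $|h_i|\ll 1/L$, so taking $L$ large makes $\|\bh\|\le 1/2$. Renormalising, $\bz(t)=\by(t)/\|\by(t)\|\in\bS^n$, and a direct computation gives $\min_j z_j>\min_j x_j$ for small $t>0$: for $j<r$ with $x_j\ne 0$ one uses $\delta/x_j\ge 1$ in $z_j=(x_j+\delta t+O(t^2))/(1+t/2+O(t^2))$; for $x_j=0$ one has $z_j>0$; and for $j\ge r$ one has $z_j>\delta\ge\min_j x_j$. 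This contradicts the maximality of $\bx$ and proves the claim.

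By the claim, each $\ba\in\mathscr{B}_{N_{d,n},\delta}$ lies in $\mathscr{B}_{N_{d,n},\delta}^{(r)}$ for some $1\le r\le n$, where
\[
\mathscr{B}_{N_{d,n},\delta}^{(r)}=\left\{\ba\in\mathcal{B}_{N_{d,n}}(1):\exists\,\bx\in\bS^n\cap(\bR_{\ge 0})^{n+1},\ x_i\le\delta\text{ for }i<r,\ |\partial_i f_\ba(\bx)|\ll\delta\text{ for }i\ge r\right\},
\]
so it suffices to show $\mathrm{vol}(\mathscr{B}_{N_{d,n},\delta}^{(r)})\ll\delta$. For this I would fibre over the witness $\bx$: if $\bx$ witnesses membership for $\ba$, then every $\by\in\bS^n$ with $\|\by-\bx\|\le\delta$ satisfies $|y_i|\le 2\delta$ for $i<r$ and, since $\|\ba\|\le 1$, also $|\partial_i f_\ba(\by)|\ll\delta$ for $i\ge r$. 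Hence, with $\mathcal{S}^{(r)}=\{\bx\in\bS^n:|x_i|\le 2\delta\text{ for }i<r\}$,
\[
\mathrm{vol}\left(\mathscr{B}_{N_{d,n},\delta}^{(r)}\right)\ll\frac{1}{\delta^n}\int_{\mathcal{S}^{(r)}}\mathrm{vol}\left(\left\{\ba\in\mathcal{B}_{N_{d,n}}(1):|\partial_i f_\ba(\bx)|\ll\delta\text{ for }i\ge r\right\}\right)\,d\bx.
\]
To bound the inner volume, for $\bx\in\mathcal{S}^{(r)}$ with $\delta$ small we may assume some coordinate with index $\ge r$, say $x_r$, satisfies $x_r\ge 1/(n+1)$; then, viewing the $n+1-r$ conditions $|\partial_i f_\ba(\bx)|\ll\delta$ ($i\ge r$) as constraints on the coefficients of the monomials $x_r^{d-1}x_i$ with the remaining coefficients fixed, each such coefficient is confined to an interval of length $O(\delta)$, giving inner volume $O(\delta^{n+1-r})$, just as in the final part of the proof of Lemma~\ref{mathscrUbound}. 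Since $\mathrm{vol}(\mathcal{S}^{(r)})\ll\delta^r$ as an $n$-dimensional subset of $\bS^n$, we obtain $\mathrm{vol}(\mathscr{B}_{N_{d,n},\delta}^{(r)})\ll\delta^{-n}\cdot\delta^r\cdot\delta^{n+1-r}=\delta$, and summing over $r\in\{1,\dots,n\}$ completes the proof.

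The main obstacle is the perturbation argument of the second paragraph: one must produce a genuine zero of $f_\ba$ near $\bx$ that raises every small coordinate simultaneously and then check, after projecting back onto $\bS^n$, that $\min_j z_j$ strictly increases — the interplay between the choice of $L$, the size of $\bh$, and the $O(t^2)$ errors is where all the care lies. The volume count in the last paragraph is then routine, relying only on the elementary estimate for linear conditions on polynomial coefficients already used in Lemma~\ref{mathscrUbound}.
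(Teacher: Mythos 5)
Your proposal is correct and follows essentially the same route as the paper's own proof: the same extremal choice of $\bx$ maximising $\min_i x_i$, the same perturbation argument forcing $\partial_i f_{\ba}(\bx)\ll\delta$ for $i\ge r$, and the same covering/fibering computation $\delta^{-n}\cdot\delta^{r}\cdot\delta^{n+1-r}=\delta$. The only (harmless) deviation is that you drop the redundant condition $|f_{\ba}(\bx)|\ll\delta$ from the inner volume, which the paper includes but never uses in the coefficient count.
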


Now we let, as in \cite{bbs2023},
\[M_{d,n}=\max\left\{\Vert\nabla f_{\ba}(\bx)\Vert :(\ba,\bx)\in\bS^{N_{d,n}-1}\times\bS^{n}\right\}.\]
We also let, for $\lambda>0$,
\begin{align*}\hat{\mathcal{B}}_{d,n,\delta}^{(\lambda)}=&\left\{\ba\in\mathcal{B}_{N_{d,n}}(1):\exists\bx\in\bS^{n}\cap(\bR^{+})^{n+1}\begin{array}{l l}f_{\ba}(\bx)=0\\\lambda\Vert\ba\Vert/2<\Vert\nabla f_{\ba}(\bx)\Vert\le \lambda\Vert\ba\Vert\end{array}\right\}\\ &\setminus \mathscr{B}_{d,n,\delta},\end{align*}
\[\hat{\mathcal{B}}_{d,n,\delta}^{(0)}=\left\{\ba\in\mathcal{B}_{N_{d,n}}(1):\exists\bx\in\bS^{n}\cap(\bR^{+})^{n+1}\begin{array}{l l}f_{\ba}(\bx)=0\\\nabla f_{\ba}(\bx)=\mathbf{0}\end{array}\right\}\setminus \mathscr{B}_{d,n,\delta}\]
and $\hat{\mathcal{B}}_{d,n,\delta}=\mathcal{B}_{N_{d,n}}(1)\setminus \mathscr{B}_{d,n,\delta}$. Note that
\begin{equation}\label{calBdyadicunion}\hat{\mathcal{B}}_{d,n,\delta}\cap\bI_{d,n}^{\mathrm{ploc}}=\left(\bigcup_{\ell=0}^{\infty}\hat{\mathcal{B}}_{d,n,\delta}^{(M_{d,n}/2^{\ell})}\right)\cup\hat{\mathcal{B}}_{d,n,\delta}^{(0)}.\end{equation}
\begin{lemma}\label{volBbound}
Let $d\ge 2$ and $n\ge 3$. For $\lambda\in(0,M_{d,n})$ and $\delta>0$, we have $\mathrm{vol}\left(\hat{\mathcal{B}}_{d,n,\delta}^{(\lambda)}\right)\ll \lambda^{2}$.
We also have $\mathrm{vol}\left(\hat{\mathcal{B}}_{d,n,\delta}^{(0)}\right)=0$.
\begin{proof}
This follows immediately from \cite[Lemma 5.9]{bbs2023}.
\end{proof}
\end{lemma}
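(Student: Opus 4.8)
The plan is to deduce both bounds directly from \cite[Lemma 5.9]{bbs2023} by observing that the two sets in question are contained in the sets that lemma already controls, so that nothing genuinely new is needed. Recall that \cite[Lemma 5.9]{bbs2023} bounds the volume of the set of $\ba\in\mathcal{B}_{N_{d,n}}(1)$ for which there exists some $\bx\in\bS^{n}$ with $f_{\ba}(\bx)=0$ and $\lambda\Vert\ba\Vert<\Vert\nabla f_{\ba}(\bx)\Vert\le 2\lambda\Vert\ba\Vert$, together with the statement that the analogous set with $\nabla f_{\ba}(\bx)=\mathbf{0}$ imposed is null. Our sets $\hat{\mathcal{B}}_{N_{d,n},\delta}^{(\lambda)}$ and $\hat{\mathcal{B}}_{N_{d,n},\delta}^{(0)}$ differ from these only in that the witnessing point $\bx$ is additionally required to lie in $(\bR^{+})^{n+1}$ and in that the exceptional set $\mathscr{B}_{N_{d,n},\delta}$ is removed; both modifications can only shrink the set.

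First I would record the resulting inclusions. Since every $\ba\in\hat{\mathcal{B}}_{N_{d,n},\delta}^{(\lambda)}$ has a witness $\bx\in\bS^{n}\cap(\bR^{+})^{n+1}\subseteq\bS^{n}$ satisfying the same equation and the same dyadic bound on the gradient,
\[\hat{\mathcal{B}}_{N_{d,n},\delta}^{(\lambda)}\subseteq\left\{\ba\in\mathcal{B}_{N_{d,n}}(1):\exists\,\bx\in\bS^{n}\text{ with }f_{\ba}(\bx)=0,\ \lambda\Vert\ba\Vert<\Vert\nabla f_{\ba}(\bx)\Vert\le 2\lambda\Vert\ba\Vert\right\},\]
and similarly $\hat{\mathcal{B}}_{N_{d,n},\delta}^{(0)}$ is contained in the locus of $\ba$ for which $f_{\ba}=0$ has a real singular point. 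Then I would simply invoke \cite[Lemma 5.9]{bbs2023} together with monotonicity of Lebesgue measure under inclusion: the set on the right above has volume $\ll\lambda^{2}$, hence so does $\hat{\mathcal{B}}_{N_{d,n},\delta}^{(\lambda)}$, and the $\lambda=0$ set has measure zero because it lies inside a null set.

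There is no real analytic obstacle here; the only thing to check carefully is the bookkeeping, namely that the quantity $M_{d,n}$, the range $\lambda\in(0,M_{d,n})$, and the dyadic ranges for $\Vert\nabla f_{\ba}(\bx)\Vert$ are normalised exactly as in \cite{bbs2023}, so that the cited bound applies verbatim to the enlarged sets. Once that is confirmed the lemma follows immediately, which is why it admits a one-line proof.
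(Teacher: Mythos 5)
Your proposal is correct and matches the paper's own proof, which simply cites \cite[Lemma 5.9]{bbs2023}; the containment you spell out (the witness $\bx$ is required to lie in $\bS^{n}\cap(\bR^{+})^{n+1}\subseteq\bS^{n}$ and the set $\mathscr{B}_{N_{d,n},\delta}$ is removed, so $\hat{\mathcal{B}}_{N_{d,n},\delta}^{(\lambda)}$ and $\hat{\mathcal{B}}_{N_{d,n},\delta}^{(0)}$ sit inside the sets controlled there) is exactly why the paper can say the result follows immediately. Nothing further is needed.
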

\begin{lemma}\label{taulowerbound}
Let $d\ge 2$ and $n\ge 3$. Let also $\gamma>0$ and $\tilde{\lambda}=\min\{\lambda,\delta\}$. For $\lambda\in(0,M_{d,n})$, and $\ba\in\hat{\mathcal{B}}_{d,n,\delta}^{(\lambda)}$, we have,
\[\tau'(\ba;\gamma)\gg\tilde{\lambda}^{n+1}\cdot\mathrm{min}\left\{\gamma,\frac{1}{\lambda\tilde{\lambda}}\right\}.\]
\begin{proof}
We may assume without loss of generality that $\Vert \ba\Vert=1$. Also, since $\ba\in\hat{\mathcal{B}}_{d,n,\delta}^{(\lambda)}$, we may take some $\bx\in\bR^{n+1}$ with $\Vert\bx\Vert=1/2$, satisfying the conditions $f_{\ba}(\bx)=0$, $x_{i}\ge \delta/2$ for all $i$ and
\[\frac{\lambda}{2^{d}}<\Vert\nabla f_{\ba}(\bx)\Vert\le \frac{\lambda}{2^{d-1}}.\]
Recall that
\[\tau'(\ba;\gamma)=\gamma \cdot \mathrm{vol}\left(\left\{\bu\in\mathcal{B}_{n+1}(1)\cap(\bR^{+})^{n+1}:|f_{\ba}(\bu)|\le \frac{\Vert\nu_{d,n}(\bu)\Vert}{2\gamma}\right\}\right).\]
Similarly to the proof of \cite[Lemma 5.10]{bbs2023}, it follows from $\Vert\nu_{d,n}(\bu)\Vert\ge\Vert\bu\Vert^{d}$ that
\[\tau'(\ba;\gamma)\ge \gamma \cdot \mathrm{vol}\left(\left\{\bu\in\left(\mathcal{B}_{n+1}(1)\setminus\mathcal{B}_{n+1}\left(\frac{1}{4}\right)\right)\cap(\bR^{+})^{n+1}:|f_{\ba}(\bu)|\le \frac{1}{2^{2d+1}\gamma}\right\}\right).\]
We also have that if $\Vert \bu-\bx\Vert\le \lambda/(4M_{d,n})$, then $1/4<\Vert \bu\Vert<3/4$. If we also have $\lambda/(4M_{d,n})<\delta/2$ then $u_{j}>0$ for all $j$ and so
\begin{equation}\tau'(\ba;\gamma)\ge \gamma \cdot\mathrm{vol}\left(\left\{\bv\in\mathcal{B}_{n+1}\left(\frac{\lambda}{4M_{d,n}}\right):|f_{\ba}(\bx+\bv)|\le \frac{1}{2^{2d+1}\gamma}\right\}\right).\end{equation}
In the proof of \cite[Lemma 5.10]{bbs2023} it was shown that the right hand side is $\gg \lambda^{n+1}\cdot \min\{\gamma,1/\lambda^{2}\}$, which suffices in this case.

When $\lambda/(4M_{d,n})\ge \delta/2$, we instead have
\[\tau'(\ba;\gamma)\ge \gamma \cdot\mathrm{vol}\left(\left\{\bv\in\mathcal{B}_{n+1}\left(\frac{\delta}{2}\right):|f_{\ba}(\bx+\bv)|\le \frac{1}{2^{2d+1}\gamma}\right\}\right).\]
This leads to
\[\tau'(\ba;\gamma)\ge \gamma \int_{\mathcal{B}_{n}(\delta/4)}\mathcal{W}'_{\ba,\bx}(\bw;\delta,\gamma)\d\bw,\]
where $\bw=(v_{1},\dots,v_{n})$ and
\[\mathcal{W}'_{\ba,\bx}(\bw;\delta,\gamma)=\mathrm{vol}\left(\left\{v_{0}\in[-\delta/4,\delta/4]:|f_{\ba}(\bx+(v_{0},\bw))|\le \frac{1}{2^{2d+1}\gamma}\right\}\right).\]
Without loss of generality, assume that
\[\frac{\lambda}{2^{d}n}<\left| \partial_{0}f_{\ba}(\bx)\right|\le \frac{\lambda}{2^{d-1}}.\]
We make the change of variables
\[v_{0}=-\left(\partial_{0}f_{\ba}(\bx)\right)^{-1}\left(\partial_{1}f_{\ba}(\bx)v_{1}+\cdots+\partial_{n}f_{\ba}(\bx)v_{n}-w_{0}\right),\]
so that $w_{0}=\langle \nabla f_{\ba}(\bx),\bv\rangle$.
Under this transformation, $f_{\ba}(\bx+(v_{0},\bw))$ becomes $w_{0}+P_{\ba,\bx}(w_{0},\bw)$, for some polynomial $P_{\ba,\bx}$. Looking at the first order Taylor expansion, we see that $P_{\ba,\bx}(w_{0},\bw)$ is free of constant and linear terms in $(w_{0},\bw)$. 

There exists $L>0$ such that if $\bw\in\mathcal{B}_{n}(L\delta)$ and $|w_{0}|\le L\lambda\delta$, then $|v_{0}|\le \delta/4$. We then get
\[\mathcal{W}'_{\ba,\bx}(\bw;\delta,\gamma)\gg \frac{1}{\lambda}\cdot \mathrm{vol}\left(\left\{w_{0}\in[-L\lambda\delta,L\lambda\delta]:|w_{0}+P_{\ba,\bx}(w_{0},\bw)|\le \frac{1}{2^{2d+1}\gamma}\right\}\right).\]
By the same argument as in \cite[Lemma 5.10]{bbs2023}, if $L$ is sufficiently small then for $\bw\in\mathcal{B}_{n}(L\delta)$ there exists $\omega_{\ba,\bx}(\bw)\in(-L\lambda\delta/2,L\lambda\delta/2)$ such that $\omega_{\ba,\bx}(\bw)+P_{\ba,\bx}(\omega_{\ba,\bx}(\bw),\bw)=0$. Following the argument from \cite[Lemma 5.10]{bbs2023} further, we find that after possibly making $L$ smaller, if $|w_{0}|\le L\lambda \delta$ then
\[|w_{0}+P_{\ba,\bx}(w_{0},\bw)|\le 2|w_{0}-\omega_{\ba,\bx}(\bw)|,\]
so
\begin{align*}\mathcal{W}'_{\ba,\bx}(\bw;\delta,\gamma)&\gg \frac{1}{\lambda}\cdot\mathrm{vol}\left(\left\{w_{0}\in[-L\lambda\delta/2,L\lambda\delta/2]:|w_{0}-\omega_{\ba,\bx}(\bw)|\le \frac{1}{2^{2d+2}\gamma}\right\}\right)\\ &\gg \frac{1}{\lambda}\cdot\mathrm{min}\left\{\lambda\delta,\frac{1}{\gamma}\right\}.\end{align*}
Hence we get the lower bound
\[\tau'(\ba;\gamma)\gg \frac{\gamma}{\lambda}\cdot \mathrm{min}\left\{\lambda\delta,\frac{1}{\gamma}\right\}\cdot\mathrm{vol}(\mathcal{B}_{n}(L\delta)),\]
which gives the result.
\end{proof}
\end{lemma}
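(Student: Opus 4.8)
The plan is to produce, near a well-chosen zero of $f_{\ba}$, an explicit set of points $\bu\in\mathcal{B}_{n+1}(1)\cap(\bR^{+})^{n+1}$ on which the cone condition $\ba\in\mathcal{C}_{\nu_{d,n}(\bu)}^{(\gamma)}$ holds, and to bound its volume from below. I would first normalise $\Vert\ba\Vert=1$: by \eqref{defcalC} the cone $\mathcal{C}_{\bv}^{(\gamma)}$ is insensitive to rescaling $\bv$, so $\tau'(\ba;\gamma)$ depends only on the direction of $\ba$, and the conditions defining $\hat{\mathcal{B}}_{N_{d,n},\delta}^{(\lambda)}$ and $\mathscr{B}_{N_{d,n},\delta}$ are likewise invariant under $\ba\mapsto c\ba$. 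Unravelling \eqref{defcalC}, membership $\ba\in\mathcal{C}_{\nu_{d,n}(\bu)}^{(\gamma)}$ is the inequality $|f_{\ba}(\bu)|\le\Vert\nu_{d,n}(\bu)\Vert/(2\gamma)$; since for $\bu\in(\bR^{+})^{n+1}$ with $\Vert\bu\Vert\in[1/4,3/4]$ some coordinate of $\bu$ is $\gg_{d,n}1$, we have $\Vert\nu_{d,n}(\bu)\Vert\gg_{d,n}1$ on that annulus, so it suffices to bound below the volume of the set of such $\bu$ with $|f_{\ba}(\bu)|\le c_{0}/\gamma$, for a small constant $c_{0}=c_{0}(d,n)>0$, and then multiply by $\gamma$.

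Next I would fix a single point $\bx$ with $\Vert\bx\Vert=1/2$, $f_{\ba}(\bx)=0$, $x_{i}\gg\delta$ for every $i$, and $\Vert\nabla f_{\ba}(\bx)\Vert\asymp\lambda$, writing $\bd=\nabla f_{\ba}(\bx)$. Restricting to $\bu=\bx+\bv$ with $\Vert\bv\Vert\le\rho$, where $\rho=\min\{\lambda/(4M_{d,n}),\delta/2\}\asymp\tilde\lambda$, keeps $\bu$ inside the annulus $1/4\le\Vert\bu\Vert\le 3/4$ and in $(\bR^{+})^{n+1}$; the case split in the argument is merely over which of $\lambda/(4M_{d,n})$ and $\delta/2$ is the binding constraint. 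A Taylor expansion gives $f_{\ba}(\bx+\bv)=\langle\bd,\bv\rangle+O(\Vert\bv\Vert^{2})$.

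Then I would reduce to a one-dimensional estimate exactly as in \cite[Lemma 5.10]{bbs2023}. Choosing a coordinate, say $v_{0}$, with $|\partial_{0}f_{\ba}(\bx)|\gg\lambda$, writing $\bv=(v_{0},\bw)$, and performing the affine change of variable in $v_{0}$ that absorbs the linear part of $v_{0}\mapsto f_{\ba}(\bx+(v_{0},\bw))$, one is left with controlling $w_{0}\mapsto w_{0}+P_{\ba,\bx}(w_{0},\bw)$, with $P_{\ba,\bx}$ free of constant and linear terms, on an interval of length $\asymp\lambda\tilde\lambda$. Since $\tilde\lambda\le\lambda$ the linear term beats the quadratic error there, so the intermediate value theorem gives a zero $\omega(\bw)$ well inside the interval, and around $\omega(\bw)$ the set with $|f_{\ba}(\bx+(v_{0},\bw))|\le c_{0}/\gamma$ has one-dimensional measure $\gg\lambda^{-1}\min\{\lambda\tilde\lambda,\gamma^{-1}\}=\min\{\tilde\lambda,(\lambda\gamma)^{-1}\}$. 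Integrating over $\bw$ in an $n$-dimensional ball of radius $\asymp\tilde\lambda$ (volume $\asymp\tilde\lambda^{n}$) and multiplying by $\gamma$ yields $\tau'(\ba;\gamma)\gg\gamma\,\tilde\lambda^{n}\min\{\tilde\lambda,(\lambda\gamma)^{-1}\}=\tilde\lambda^{n+1}\min\{\gamma,(\lambda\tilde\lambda)^{-1}\}$, as required; when $\lambda/(4M_{d,n})<\delta/2$ one may take $\rho=\lambda/(4M_{d,n})$ and the estimate reduces to \cite[Lemma 5.10]{bbs2023} verbatim.

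The step I expect to be the main obstacle is the construction of the point $\bx$ at the start of the second paragraph. The hypothesis $\ba\notin\mathscr{B}_{N_{d,n},\delta}$ only supplies \emph{some} zero of $f_{\ba}$ in the positive orthant all of whose coordinates exceed $\delta$, while $\ba\in\hat{\mathcal{B}}_{N_{d,n},\delta}^{(\lambda)}$ supplies \emph{some} zero whose gradient has norm in $(\lambda,2\lambda]$, and a priori these are different points. One has to argue --- for instance, by deforming the gradient-controlled zero within the zero locus using the implicit function theorem, the relevant gradient being nonzero, and ruling out the degenerate coordinate behaviour excluded by $\ba\notin\mathscr{B}_{N_{d,n},\delta}$ --- that a single zero can be chosen with both properties simultaneously, or else to arrange the case analysis so that whichever property is lost does no harm. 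Once such an $\bx$ is in hand, the remainder is the localisation above together with the one-variable computation imported from \cite{bbs2023}.
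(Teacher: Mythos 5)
Your argument is, in substance, the paper's own: normalise $\Vert\ba\Vert=1$, localise to a ball of radius $\rho\asymp\tilde\lambda=\min\{\lambda,\delta\}$ about a zero $\bx$ with $\Vert\nabla f_{\ba}(\bx)\Vert\asymp\lambda$ and all coordinates $\gg\delta$, and run the one-variable change of coordinates from \cite[Lemma 5.10]{bbs2023}; your unified treatment with $\rho=\min\{\lambda/(4M_{d,n}),\delta/2\}$ is just a repackaging of the paper's case split according to which constraint binds, and your volume computation $\gamma\,\tilde\lambda^{n}\cdot\lambda^{-1}\min\{\lambda\tilde\lambda,\gamma^{-1}\}=\tilde\lambda^{n+1}\min\{\gamma,(\lambda\tilde\lambda)^{-1}\}$ is correct.

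The obstacle you flag at the end is genuine, and you should be aware that the paper does not resolve it either: its proof simply asserts that membership in $\hat{\mathcal{B}}_{N_{d,n},\delta}^{(\lambda)}$ yields a single $\bx$ with $f_{\ba}(\bx)=0$, $x_{i}\ge\delta/2$ for all $i$, \emph{and} $\Vert\nabla f_{\ba}(\bx)\Vert\asymp\lambda$, whereas the definition only supplies one witness for the gradient condition (lying in the open positive orthant, with possibly tiny coordinates) and, via $\ba\notin\mathscr{B}_{N_{d,n},\delta}$, a possibly different witness for the coordinate condition (with uncontrolled gradient). Your suggestion of deforming one zero into the other along the zero locus does not obviously work (the two zeros need not lie on the same component, and the gradient norm can leave the dyadic window along any path). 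The clean repair is your second suggestion, rearranging the case analysis upstream: in the proof of Lemma \ref{archnotsmall}, perform the dyadic decomposition according to the gradient norm at a zero \emph{all of whose coordinates exceed} $\delta$ (such a zero exists for every $\ba\in\hat{\mathcal{B}}_{N_{d,n},\delta}\cap\bI_{d,n}^{\mathrm{ploc}}$), i.e.\ replace $\hat{\mathcal{B}}_{N_{d,n},\delta}^{(\lambda)}$ by the set of $\ba$ admitting a zero $\bx\in\bS^{n}$ with $x_{i}>\delta$ for all $i$ and $\lambda\Vert\ba\Vert<\Vert\nabla f_{\ba}(\bx)\Vert\le2\lambda\Vert\ba\Vert$. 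The covering identity used there still holds, and since the modified sets are contained in the original ones, Lemma \ref{volBbound} applies to them unchanged; with that modification your proof (and the paper's) goes through as written.
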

The reason for needing to exclude points in $\mathscr{B}_{d,n,\delta}$ here is because otherwise, the point $\bx$ may have $x_{i}$ arbitrarily small for some $i$, and the $\omega_{\ba,\bx}(\bw)$ we construct might also conspire to be negative for most $\bw$. If these both happen, then most of the solutions that we find to the inequality $|f_{\ba}(\bx+\bv)|\le 1/(2^{2d+1}\gamma)$ will not lie in $(\bR^{+})^{n+1}$. 
\begin{proof}[Proof of Lemma \ref{archnotsmall}]
Recall the definitions \eqref{defarchfactor} and \eqref{defIprime} of $\fJ'_{V}$ and $\bI_{d,n}^{\mathrm{ploc}}$ respectively. Let $B=\psi(A)A$ and recall that $\alpha=\log B$, noting that $\log B \asymp \log A$. Define
\[\hat{\mathscr{I}}_{\beta}(A)=\frac{1}{\#\bV_{d,n}^{\mathrm{ploc}}(A)}\cdot\#\left\{V\in\bV_{d,n}^{\mathrm{ploc}}(A):\fJ'_{V}(A\psi(A))<\frac{C}{\beta(A)}\right\}.\]
By Theorems \ref{positivelocaldensity} and \ref{mathscrUbound}, and definitions \eqref{defarchfactor} and \eqref{defIprime},
\[\hat{\mathscr{I}}_{\beta}(A)\ll \frac{1}{A^{N_{d,n}}}\cdot\#\left\{\ba\in\bZ^{N_{d,n}}\cap\mathcal{B}_{N_{d,n}}(A):\begin{array}{l l}\mathcal{N}(\ba)\subset\bI_{d,n}^{\mathrm{ploc}}\\ \tau'(\ba;\alpha)<\frac{C}{\beta(A)}\end{array}\right\}+\frac{1}{A}.\]
We have that if $\ba\in\bR^{N_{d,n}}$ and $\Vert \ba\Vert \ge 8\alpha$, then for any $\by\in\mathcal{N}(\ba)$,
we have
\[\tau'(\by;2\alpha)\le 2\tau'(\ba;\alpha).\]
The proof of this is identical to the proof of the analogous result for $\tau$ seen in the proof of \cite[Proposition 5.3]{bbs2023}.
We get, following the proof of \cite[Proposition 5.3]{bbs2023} further:
\[\hat{\mathscr{I}}_{\beta}(A)\ll \mathrm{vol}\left(\left\{\by\in\mathcal{B}_{N_{d,n}}(1)\cap\bI_{d,n}^{\mathrm{ploc}}:\tau'(\by;2\alpha)<\frac{2C}{\beta(A)}\right\}\right)+\frac{1}{A}\]
and then, after removing the points of $\mathscr{B}_{d,n,\delta}$,
\begin{equation}\label{mathscrIbound}\hat{\mathscr{I}}_{\beta}(A)\ll \frac{1}{\beta(A)^{\kappa}}\int_{\hat{\mathcal{B}}_{d,n,\delta}\cap \bI_{d,n}^{\mathrm{ploc}}}\frac{\d\by}{\tau'(\by;2\alpha)^{\kappa}}+\frac{1}{A}+\mathrm{vol}(\mathscr{B}_{d,n,\delta}),\end{equation}
where $\kappa\in(0,2/n)$ to be chosen later and we let $\delta=1/\beta(A)^{\eta}$ for some $\eta>0$ also to be chosen.

Then, by \eqref{calBdyadicunion} and the fact that $\mathrm{vol}(\hat{\mathcal{B}}_{d,n,\delta}^{(0)})=0$ from Lemma \ref{volBbound},
\[\int_{\hat{\mathcal{B}}_{d,n,\delta}\cap \bI_{d,n}^{\mathrm{ploc}}}\frac{\d\by}{\tau'(\by;2\alpha)^{\kappa}}\le \sum_{\ell=0}^{\infty}\int_{\hat{\mathcal{B}}_{d,n,\delta}^{(M_{d,n}/2^{\ell})}}\frac{\d\by}{\tau'(\by;2\alpha)^{\kappa}}.\]
Now for each of these integrals, we bound the integrand using Lemma \ref{taulowerbound} with $\lambda=M_{d,n}/2^{\ell}$ and bound the volume of the domain using Lemma \ref{volBbound}. Letting $N(\delta)$ be the smallest integer such that $M_{d,n}/2^{N(\delta)}\le \delta$, we have
\begin{align*}\int_{\hat{\mathcal{B}}_{d,n,\delta}\cap \bI_{d,n}^{\mathrm{ploc}}}\frac{\d\by}{\tau'(\by;2\alpha)^{\kappa}}&\ll \sum_{\ell=N(\delta)}^{\infty}\left(\frac{2^{\ell(n+1)}}{\alpha}+2^{\ell(n-1)}\right)^{\kappa}\left(\frac{M_{d,n}}{2^{\ell}}\right)^{2}\\&+\sum_{\ell=0}^{N(\delta)-1}\left(\frac{1}{2^{\ell}\delta^{n}}+\frac{1}{\alpha\delta^{n+1}}\right)^{\kappa}\left(\frac{M_{d,n}}{2^{\ell}}\right)^{2}.\end{align*}
Now the first term can be shown to be at most $O_{\kappa}(1)$ as long as $\kappa<2/(n+1)$. We choose $\kappa=2/(n+1)-\eps$ for some $\eps>0$. For the second term, we take $\eta=\kappa/(1+n\kappa)$ so that by the assumption $\beta(A)\le (\log A)^{(3n+1)/2}$, we have $1/\delta\le \log A\ll \alpha$, so $\alpha\delta^{n+1}\gg \delta^{n}$. We then obtain a bound of
\[\sum_{\ell=0}^{N(\delta)-1}\left(\frac{1}{2^{\ell}\delta^{n}}+\frac{1}{\alpha\delta^{(n+1)}}\right)^{\kappa}\left(\frac{M_{d,n}}{2^{\ell}}\right)^{2}\ll \delta^{-n\kappa}\sum_{\ell=0}^{N(\delta)-1} 4^{-\ell}\ll \delta^{-n\kappa}=\beta(A)^{\eta n\kappa}.\]
By our choice of $\eta$, we have $\eta n \kappa=\kappa-\eta$. Recalling \eqref{mathscrIbound}, we then have
\[\hat{\mathscr{I}}_{\beta}(A)\ll \frac{1}{\beta(A)^{\eta}}+\frac{1}{A}+\mathrm{vol}(\mathscr{B}_{d,n,\delta}).\]
We then apply Lemma \ref{badvolbound} and see that $\mathrm{vol}(\mathscr{B}_{d,n,\delta})\ll \beta(A)^{-\eta}$. As $\kappa$ increases, so does $\eta$, so from our choice of $\kappa$, we find that
\[\eta=\frac{2/(n+1)}{1+2n/(n+1)}-\eps'=\frac{2}{3n+1}-\eps'\]
for some $\eps'>0$ with $\eps'=o(1)$ as $\eps\rightarrow 0^{+}$. This completes the proof.
\end{proof}
This completes the proof of Theorem \ref{localcountnotsmall}.

\providecommand{\bysame}{\leavevmode\hbox to3em{\hrulefill}\thinspace}

\end{document}